\newtheorem{thm}{Theorem}
\newtheorem{prop}{Proposition}
\newtheorem{lem}[prop]{Lemma}
\newtheorem{cor}[prop]{Corollary}
\theoremstyle{definition}
\newtheorem*{rem}{Remark}
\newcommand{\mb}{\mathbb}
\newcommand{\mc}{\mathcal}
\newcommand{\ol}{\overline}
\newcommand{\leqs}{\leqslant}
\newcommand{\geqs}{\geqslant}
\newcommand{\NN}{\mathbb{N}}
\newcommand{\ind}{\mathds{1}}
\newcommand{\be}{\begin{equation*}}
\newcommand{\ee}{\end{equation*} }
\newcommand{\ben}{\begin{equation}}
\newcommand{\een}{\end{equation} }
\newcommand{\bs}{\begin{split}}
\newcommand{\es}{\end{split}}
\newcommand{\bmu}{\begin{multline*}}
\newcommand{\emu}{\end{multline*}}
\newcommand{\bmun}{\begin{multline}}
\newcommand{\emun}{\end{multline}}
\newcommand{\EE}{\mathbb{E}}
\newcommand{\PP}{\mathbb{P}}
\begin{document}
\title[Partial sums of random multiplicative functions]{Partial sums of random multiplicative functions and extreme values of a model for the Riemann zeta function}

\author{Marco Aymone}
\address{Departamento de Matem\'atica, Universidade Federal de Minas Gerais, Av. Ant\^onio Carlos, 6627, CEP 31270-901, Belo Horizonte, MG, Brazil.}
\email{aymone.marco@gmail.com}

\author{Winston Heap}
\address{Max Planck Institute for Mathematics, Vivatsgasse 7, 53111 Bonn.}
\email{winstonheap@gmail.com}

\author{Jing Zhao}
\address{Max Planck Institute for Mathematics, Vivatsgasse 7, 53111 Bonn.}
\email{jingzh95@gmail.com}

\begin{abstract}We consider partial sums of a weighted Steinhaus random multiplicative function and view this as a model for the Riemann zeta function. We give a description of the tails and high moments of this object. Using these we determine the likely maximum of $T\log T$ independently sampled copies of our sum and find that this is in agreement with a conjecture of Farmer--Gonek--Hughes on the maximum of the Riemann zeta function. We also consider the question of almost sure bounds. We determine upper bounds on the level of squareroot cancellation and lower bounds which suggest a degree of cancellation much greater than this which we speculate is in accordance with the influence of the Euler product. 

\end{abstract}
\maketitle

\section{Introduction}

In this paper we investigate a model for the Riemann zeta function provided by a sum of random multiplicative functions. To define these, let $(f(p))_{p}$ be a set of independent random variables uniformly distributed on the unit circle (Steinhaus variables) where $p$ runs over the set of primes and let $f(n)=\prod_{p^{v_p}|| n} f(p)^{v_p}$. Alternatively, one can take $(f(p))_{p}$ to be independent random $\pm 1$'s with equal probability (Rademacher variables), and let $f(n)$ be the multiplicative extension of these to the squarefree integers. 

The study of random multiplicative functions as a model for the usual deterministic multiplicative functions was initiated by Wintner \cite{W}. He considered the Rademacher case as a model for the M\"obius function and proved that the partial sums satisfy 
\begin{equation}
\label{as wint}
\sum_{n\leqs x}f(n)\ll x^{1/2+\epsilon}
\end{equation} 
almost surely, thus allowing him the assertion that ``Riemann's hypothesis is almost always true". We shall focus instead on the case of Steinhaus random multiplicative functions. In light of their orthogonality relations 
\[\mathbb{E}[f(m)\overline{f(n)}]={\bf 1}_{m=n}\] one can think of Steinhaus $f(n)$ as a model for $n^{it}$ with $t\in\mathbb{R}$. This point of view has been fruitfully used over the years with arguably the first instance being the pioneering work of Bohr \cite{bohr} (although the $f(p)$ appeared in a different guise there).  Given that 
\[
\zeta(\tfrac{1}{2}+it)\sim \sum_{n\leqs T}\frac{1}{n^{1/2+it}}
\]
for large $t\in [T,2T]$, the above reasoning suggests that for Steinhaus $f(n)$ the sum
\[
M_f(T)=\sum_{n\leqs T}\frac{f(n)}{\sqrt{n}}
\]
provides a good model for the zeta function. We investigate various aspects of this sum, starting with the value distribution of $|M_f(T)|$. 

In the case of the zeta function we have Selberg's famous central limit theorem which states that for $V=L\sqrt{\tfrac{1}{2}\log\log T}$ with $L\in\mathbb{R}$, fixed,
\[
\frac{1}{T}\mu\bigg( t\in[T,2T]:|\zeta(\tfrac{1}{2}+it)|\geqs e^V\bigg) \sim \frac{1}{\sqrt{\pi\log\log T }}\int_V^\infty e^{-x^2/\log\log T}dx
\]
as $T\to\infty$ where $\mu$ denotes Lebesgue measure. Regarding the uniformity of $V$, Selberg's original proof in fact allowed $V\ll(\log_2 T\log_3 T)^{1/2}$ which was recently improved to $V\ll (\log_2 T)^{3/5-\epsilon}$ by Radziwi\l\l\footnote{As stated, these results differ by those in the cited work by a factor of $\sqrt{\log_2 T}$ on account of our different normalisation.}  \,\cite{R}. It is expected that this asymptotic holds for all $V\ll \log_2 T$ and that beyond this range the distribution must change, if only slightly (see Conjecture 2 of \cite{R}). Jutila \cite{J} has given  Gaussian upper bounds in the range $0\leqs V\leqs \log_2 T$ whilst, under the assumption of the Riemann hypothesis, Soundararajan \cite{S} was able to extend similar bounds into the range $V\ll \log_2T \log_3T$. This allowed for near sharp bounds on the moments of the Riemann zeta function. 
For our sum $M_f(T)$ we prove the following. 

\begin{thm}\label{dist thm} Let $h(T)\to\infty$ arbitrarily slowly and suppose $(\log_2 T)^{1/2}\log_3 T\leqs V\leqs \log T/(\log\log T)^{h(T)}$. Then 
\begin{equation}
\label{large v range}
\mathbb{P}\big(|M_f(T)|\geqs e^V\big)=\exp\bigg(-(1+o(1))\frac{V^2}{\log(\tfrac{\log T}{V})}\bigg).
\end{equation}
If $V=L\sqrt{\tfrac{1}{2}\log\log T}$ with $L>0$ fixed then 
\begin{equation}
\label{small v range}
\mathbb{P}\big(|M_f(T)|\geqs e^V\big)\gg\int_L^\infty e^{-x^2/2}dx.
\end{equation}
\end{thm}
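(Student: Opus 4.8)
The plan is to treat the two ranges of $V$ by quite different methods. The estimate \eqref{large v range} in the large range will follow from sharp bounds on the high moments $\EE|M_f(T)|^{2k}$ for the upper bound on the probability, together with an explicit ``conspiracy of primes'' construction for the matching lower bound; the estimate \eqref{small v range} in the central limit range is a second moment argument that reduces to the Gaussian behaviour of a short prime sum. The heuristic underpinning everything is that a value $|M_f(T)|\approx e^V$ is produced by the primes up to $y$ where $\log y\asymp\frac{\log T}{V}$. Indeed, since $M_f(T)$ is a Dirichlet polynomial of length $T$, a multiplicative amplification $\prod_{p\leqs y}(1-f(p)p^{-1/2})^{-1}$ built from primes $\leqs y$ can only be realised inside $M_f(T)$ through the terms $n=p_1\cdots p_r\leqs T$ with $p_i\leqs y$, i.e.\ using $r\lesssim\frac{\log T}{\log y}$ factors; matching the number of factors needed to build up the size $e^V$ pins down $\log y\asymp\frac{\log T}{V}$, and the relevant prime sum $\Re\sum_{p\leqs y}\frac{f(p)}{\sqrt p}$ then has variance $\tfrac12\sum_{p\leqs y}\tfrac1p\sim\tfrac12\log\log y\sim\tfrac12\log\big(\tfrac{\log T}{V}\big)$, which is exactly the denominator in \eqref{large v range}.

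For the upper bound in \eqref{large v range} I would run Markov's inequality in the form $\PP(|M_f(T)|\geqs e^V)\leqs e^{-2kV}\,\EE|M_f(T)|^{2k}$ and optimise over $k$. The key input is the bound $\EE|M_f(T)|^{2k}\leqs\exp\big((1+o(1))\,k^2\log\tfrac{\log T}{k}\big)$, valid uniformly for $k$ up to about $\tfrac{V}{\log(\log T/V)}$ (which, since $V$ may be nearly as large as $\log T$, is a range going up to a power of $\log T$, but staying $o(\log T/\log_2 T)$ so that we remain in the ``short product'' regime). This is where the effective truncation at $T^{1/k}$ enters: $\EE|M_f(T)|^{2k}$ is the weighted number of solutions of $n_1\cdots n_k=m_1\cdots m_k$ with all $n_i,m_j\leqs T$, and one estimates this by splitting each variable into its $T^{1/k}$-smooth and -rough parts (or by a Rankin--H\"older device) to reduce to $\prod_{p\leqs T^{1/k}}\big(\sum_{j\geqs0}\binom{k+j-1}{j}^2p^{-j}\big)=\exp\big((1+o(1))k^2\log\log T^{1/k}\big)$. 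Taking $k=(1+o(1))\tfrac{V}{\log(\log T/V)}$ then produces exactly $\exp\big(-(1+o(1))\tfrac{V^2}{\log(\log T/V)}\big)$.

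The matching lower bound on the probability is obtained from the construction above. Choose a range of primes $(p_0,y]$ with $p_0$ a suitable power of $k$ (large enough that $p_0^{-1/2}$ is small compared with $\frac{\log(\log T/V)}{V}$, so that a Gaussian-rate moderate deviation bound applies) and with $y$ fixed by $\tfrac12\sum_{p_0<p\leqs y}\tfrac1p=(1+o(1))\tfrac12\log\big(\tfrac{\log T}{V}\big)$; one checks $y\leqs T$ throughout the given range of $V$. On the event $\Re\sum_{p_0<p\leqs y}\frac{f(p)}{\sqrt p}\geqs V+o(V)$, which by the Laplace-transform estimate for a sum of independent bounded variables has probability at least $\exp\big(-(1+o(1))\tfrac{V^2}{\log(\log T/V)}\big)$, one shows $|M_f(T)|\geqs e^V$ with probability bounded away from $0$ by isolating the sub-sum of $M_f(T)$ over the $(p_0,y]$-smooth integers with suitably few prime factors (all of which are $\leqs T$ and reproduce the amplification $\approx e^V$) and dominating the complementary contribution in conditional $L^2$ given $(f(p))_{p\leqs y}$. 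For the central limit range \eqref{small v range} the same picture applies with $\log y\asymp\frac{\log T}{V}\asymp\frac{\log T}{\sqrt{\log\log T}}$, so that $\log\log y\sim\log\log T$ and $\Re\sum_{p\leqs y}\frac{f(p)}{\sqrt p}$ is a sum of independent bounded variables of variance $\tfrac12\log\log T(1+o(1))$; Berry--Esseen gives $\PP\big(\Re\sum_{p\leqs y}\tfrac{f(p)}{\sqrt p}\geqs V+O(1)\big)\gg\int_L^\infty e^{-x^2/2}\,dx$, and one transfers this to $|M_f(T)|$ by comparison with the truncated Euler product $\prod_{p\leqs y}(1-f(p)p^{-1/2})^{-1}$ (conditioning on $(f(p))_{p\leqs y}$, estimating the non-$y$-smooth part of $M_f(T)$ in conditional $L^2$, and using that the primes in $(y,T]$ carry only a $o(\log\log T)$ share of the total variance), together with a Paley--Zygmund lower bound for $M_f(T)$ given these primes.

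The technical heart of the argument, and the step I expect to be the main obstacle, is the sharp high-moment bound $\EE|M_f(T)|^{2k}\leqs\exp\big((1+o(1))k^2\log\tfrac{\log T}{k}\big)$ uniformly over a wide range of $k$: one must show that, for the $2k$-th moment, the truncation $n\leqs T$ of the Dirichlet polynomial behaves like the truncation of the Euler product at $T^{1/k}$ with no loss in the constant, which is delicate when $k$ is a genuine power of $\log T$ and the off-diagonal contribution to the moment is no longer obviously negligible. A secondary difficulty, in the lower bounds, is that a naive $L^2$ comparison of $M_f(T)$ with a short Euler product is too lossy because of Harper's ``better than squareroot cancellation'' phenomenon, so the conditioning on the small primes must be carried out with care and the part of $M_f(T)$ orthogonal to the resonating signal controlled by hand.
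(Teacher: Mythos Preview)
Your upper bound in the large range is essentially what the paper does: Markov/Chebyshev against sharp high moments, with the optimal $k\sim V/\log((\log T)/V)$. The paper gets the required moment upper bound via Rankin's trick (as in \cite{BH}), not by smooth/rough decomposition, but this is a minor difference.

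Where your plan diverges is in the \emph{lower bound} for the probability in the large range. You propose a direct resonance construction: force $\Re\sum_{p_0<p\leqs y}f(p)p^{-1/2}\geqs V$ and then transfer to $|M_f(T)|$ by isolating the smooth piece and bounding the rest in conditional $L^2$. The paper does something quite different and, in this context, cleaner. It first proves a matching \emph{lower} bound on the moments, $\EE|M_f(T)|^{2k}\geqs (\log T)^{k^2}e^{-k^2\log k-k^2\log_2 k-O(k^2)}$, via Weissler's hypercontractive inequality (to pass from real $k$ to the nearest integer) followed by an Euler-product computation over $T^{1/ck}$-smooth squarefrees. With two-sided moment asymptotics in hand, the paper then runs a Tauberian-style argument: write $\EE|M_f(T)|^{2k}=2k\int_{-\infty}^\infty\Phi(u)e^{2ku}\,du$ with $\Phi(u)=\PP(|M_f(T)|>e^u)$, and show by comparing moments at $k$ and $k(1\pm\delta)$ that this integral localises to $u\in[V(1-\epsilon),V(1+\epsilon)]$ for the chosen $k$, which forces $\Phi(V)$ to be at least the claimed size. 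This completely sidesteps the $L^2$-comparison difficulty you rightly flag (Harper's better-than-squareroot phenomenon), at the cost of needing the Weissler inequality as an extra ingredient. Your construction could probably be pushed through, but the conditional $L^2$ step is exactly where the pain would be, and you would effectively be re-deriving the moment lower bound anyway.

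A second point: you treat the whole interval $(\log_2 T)^{1/2}\log_3 T\leqs V\leqs \log T/(\log_2 T)^{h(T)}$ uniformly, but the paper splits off a ``medium'' range $(\log_2 T)^{1/2}\log_3 T\leqs V\leqs \log_2 T$ where $k=V/\log_2 T\leqs 1$ and the high-moment formula is unavailable. There the paper feeds in uniform low-moment bounds (a detailed reading of Gerspach \cite{G} for the upper bound, and the same Weissler argument for the lower bound) into the same localisation scheme. Your sketch does not cover this regime.

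For \eqref{small v range} your outline is in the right spirit (reduce to a CLT for $\Re\sum_p f(p)p^{-1/2}$), but the paper's transfer mechanism is more elaborate than Paley--Zygmund plus conditional $L^2$: it follows Harper \cite{H low}, conditioning on $(f(p))_{p\leqs\sqrt T}$, applying Khintchine to reduce to an average of $|M_f(T/p)|^2$ over $\sqrt T<p\leqs T$, smoothing, applying Parseval to reach an integral of $|F(\tfrac12+it)|^2$, restricting to $|t|\leqs 1/(2\log T)$, and only then using Jensen to pull out a prime sum to which the triangular-array CLT applies. The naive comparison you propose would run into the same squareroot-cancellation losses as in the large range.
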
   
\begin{rem}The range of $V$ in the lower bound \eqref{small v range} can be increased to $o(\log\log T)$ by applying large deviation theory in Lemma \ref{lemma gaussian lower bound} below (see Lemma 3.1 of \cite{aymone law of iterated logarithm for random dirichlet}). Since \eqref{small v range} is sufficient for our purposes, and there is only a small gap remaining in the range of $V$, we have left it as is.
\end{rem}

Thus, in contrast to the zeta function we are able to essentially understand the distribution in the range of larger $V$, whilst in the intermediate range the distribution is undetermined. The lower bound \eqref{small v range} suggests that it remains log-normal in this range, which would certainly be in analogy with the zeta function. Here, we remark that for the unweighted sum $\sum_{n\leqs T}f(n)$, Harper \cite{H high} has shown that there definitely \emph{is} a change in distribution around the intermediate range, going from something with tails of the order $e^{-2V}$ when $1\leqs V\leqs \sqrt{\log\log T}$, to something log-normal thereafter.  At any rate, we believe that in the larger range  $V\geqs \log\log T$, the estimate \eqref{large v range} should indeed reflect the true behaviour of the zeta function. Here we note the factor of $V^{-1}$ in the term $\log((\log T)/V)$ of \eqref{large v range} which becomes significant when $V\geqs (\log T)^{\theta}$ with $\theta>0$. 

As a quick corollary to these tail bounds we can derive ``likely" bounds for the maxima of independently sampled copies of $M_f(T)$. 

\begin{cor}
\label{sample max lem}
Let $f_1,\ldots,f_N$ be chosen independently. Then for $N=T\log T$ we have  
\begin{equation}
\label{P bound}
\mathbb{P}\bigg(\max_{1\leqs j\leqs N}\big|M_{f_j}(T)\big|\leqs \exp\big(\sqrt{(\tfrac{1}{2}+\epsilon)\log T\log\log T}\big)\bigg)=1-o_{T\to\infty}(1)
\end{equation}
for all $\epsilon>0$, whilst if $\epsilon<0$ the probability is $o(1)$. If $N=\log T$ then
\begin{equation}
\label{short interval bound}
\mathbb{P}\bigg(\max_{1\leqs j\leqs N}\big|M_{f_j}(T)\big|\leqs (1+\epsilon)\log T\bigg)=1-o_{T\to\infty}(1)
\end{equation}
for all $\epsilon>0$, whilst if $\epsilon<0$ the probability is $o(1)$.
\end{cor}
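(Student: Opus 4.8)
The plan is to derive both parts from Theorem~\ref{dist thm} by a single extreme-value reduction. Since $f_1,\dots,f_N$ are independent, the $|M_{f_j}(T)|$ are i.i.d.\ copies of $|M_f(T)|$, so with $q(v)=\mathbb{P}(|M_f(T)|>v)$ one has $\mathbb{P}\big(\max_{1\leqs j\leqs N}|M_{f_j}(T)|\leqs v\big)=(1-q(v))^N$; by the elementary bounds $1-Nx\leqs(1-x)^N\leqs e^{-Nx}$ (for $0\leqs x\leqs 1$) this is $1-o(1)$ whenever $N\,q(v)\to0$ and $o(1)$ whenever $N\,q(v)\to\infty$. So everything comes down to choosing the thresholds $v_\pm$ and estimating $N\,q(v_\pm)$ via \eqref{large v range}.

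For $N=T\log T$ I would take $V:=\log v_\pm=\sqrt{(\tfrac12\pm\epsilon)\log T\log\log T}$. First I would check $V$ is admissible in \eqref{large v range}: it dwarfs $(\log_2 T)^{1/2}\log_3 T$, and since $\log V=\tfrac12\log\log T+O(\log_3 T)$ it is $\leqs\log T/(\log\log T)^{h(T)}$ once $h(T)\to\infty$ slowly enough (e.g.\ $h(T)=o(\log_2 T/\log_3 T)$). Then $\log(\log T/V)=\log\log T-\log V=\tfrac12\log\log T+O(\log_3 T)$, so \eqref{large v range} yields $q(v_\pm)=\exp(-(1+o(1))(1\pm2\epsilon)\log T)=T^{-(1\pm2\epsilon)(1+o(1))}$ and hence $N\,q(v_\pm)=(\log T)\,T^{\mp2\epsilon+o(1)}$. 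For the upper threshold the fixed $\epsilon>0$ beats the $o(1)$, giving $N\,q\to0$ and $\mathbb{P}(\max\leqs v_+)=1-o(1)$; for the lower one $N\,q\to\infty$ and $\mathbb{P}(\max\leqs v_-)\leqs e^{-Nq}\to0$. This is \eqref{P bound}.

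For $N=\log T$ the natural thresholds are $v_\pm=(1\pm\epsilon)\log T$, i.e.\ $V=\log v_\pm=\log\log T+\log(1\pm\epsilon)$, which is still in the range of \eqref{large v range} since $\log\log T\geqs(\log_2 T)^{1/2}\log_3 T$ for large $T$. Now $\log(\log T/V)=\log\log T-\log\log\log T+o(1)$, so the rate in \eqref{large v range} works out to $V^2/\log(\log T/V)=\log\log T+\log\log\log T+2\log(1\pm\epsilon)+o(1)$, which is itself $\sim\log\log T=\log N$. To conclude one wants the second-order term in $\log q(v_\pm)$, which I would read off from the proof of Theorem~\ref{dist thm} (the $(1+o(1))$ in \eqref{large v range} multiplies $\log\log T$ and so is not sharp enough on its own); with that refinement one should obtain $N\,q((1+\epsilon)\log T)\to0$ and $N\,q((1-\epsilon)\log T)\to\infty$, and the reduction above gives \eqref{short interval bound}.

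The hard part is exactly this $N=\log T$ regime. For $N=T\log T$ one applies Theorem~\ref{dist thm} with a wide margin — $V\asymp\sqrt{\log T\log\log T}$ is $o(\log T)$, far from the cutoff, and $\log(\log T/V)$ is cleanly $\sim\tfrac12\log\log T$ — so the error terms are harmless. But for $N=\log T$ one is pushed to $V\asymp\log\log T$, where $e^V$ is of the same order as $\log T$ and the large-deviation rate is comparable to $\log N$; there the bookkeeping of the subleading term in the tail is what carries the argument, and it is the only place where more than a one-line substitution into Theorem~\ref{dist thm} is needed.
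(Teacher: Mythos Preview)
Your treatment of the $N=T\log T$ case is essentially identical to the paper's: both reduce to $(1-q(e^V))^N$ via independence, plug in \eqref{large v range} with $V=c\sqrt{\log T\log\log T}$, and obtain $q=\exp(-(1+o(1))2c^2\log T)$, which is $o(1/(T\log T))$ precisely when $c>1/\sqrt{2}$. Your use of the elementary sandwich $1-Nx\leqs(1-x)^N\leqs e^{-Nx}$ is a cosmetic variant of the paper's Taylor expansion of $\log(1-x)$.

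For the $N=\log T$ case you have actually been more careful than the paper, which simply says ``a similar proof gives \eqref{short interval bound}''. You are right that the stated form of Theorem~\ref{dist thm} is not sharp enough here: at $V=\log\log T+\log(1\pm\epsilon)$ one has $V^2/\log((\log T)/V)=\log\log T+\log_3 T+2\log(1\pm\epsilon)+o(1)$, and the $(1+o(1))$ prefactor in \eqref{large v range} produces an error of size $o(\log\log T)$ which swallows the decisive constant $2\log(1\pm\epsilon)$. Your instinct to reach into the proof is correct, but note that even the sharper display \eqref{upper} from Lemma~\ref{upper bound lem} only yields $q\big((1+\epsilon)\log T\big)\leqs(1+o(1))/\big((1+\epsilon)^2\log T\big)$, hence $Nq\leqs(1+o(1))/(1+\epsilon)^2$, which is bounded but does not tend to $0$; the Chebyshev/moment machinery is optimised at $k=1$ and cannot do better. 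The lower bound from Lemma~\ref{lower bounds lem} carries its own unavoidable $(1+\epsilon')$ loss. So the second-order refinement you hope to ``read off'' is not, in fact, available from the arguments recorded in the paper, and completing \eqref{short interval bound} rigorously would require a genuinely sharper tail estimate than anything proved there. This is a gap you have correctly diagnosed; the paper's one-line dismissal conceals the same gap.
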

Since the zeta function at height $T$ oscillates on a scale of roughly $1/\log T$ (which can be seen either by considering its zeros or its approximation by a Dirichlet polynomial) one might expect that by sampling it at $T\log T$ independent points on the interval $[T,2T]$ one can pick up the maximum. From this point of view \eqref{P bound} represents a model for $\max_{t\in[T,2T]}|\zeta(\tfrac{1}{2}+it)|$ and is in agreement with a conjecture of Farmer--Gonek--Hughes \cite{FGH} which states that 
\[
\max_{t\in[T,2T]}|\zeta(\tfrac{1}{2}+it)|=\exp\Big((1+o(1))\sqrt{\tfrac{1}{2}\log T\log\log T}\Big).
\]
Similarly, \eqref{short interval bound} can be thought of as a short interval maximum $\max_{h\in[0,1]}|\zeta(\tfrac{1}{2}+it+ih)|$, $t\in [T,2T]$ and is in agreement with the leading order of a very precise conjecture of Fyodorov--Hiary--Keating \cite{FHK}. We remark that much work has gone into this latter conjecture, including a proof to leading order, independently by Arguin--Belius--Bourgade--Radziwi\l\l--Soundararajan \cite{ABBRS} and Najnudel \cite{N}, and an upper bound to second order by Harper \cite{Ha4}. 
  
We shall prove \eqref{large v range} of Theorem \ref{dist thm} by considering the moments of $|M(T)|$ whilst for \eqref{small v range}, which is just out reach with moment bounds, we rely on the methods of Harper \cite{H low}. The moments were initially considered by Conrey--Gamburd \cite{CG} who proved\footnote{The result of Conrey--Gamburd was proved for Dirichlet polynomials but by the Bohr correspondence their asymptotic formula applies to our sum of random multiplicative functions also.} that for fixed $k\in\mathbb{N}$, 
\[
\mathbb{E}[|M_f(T)|^{2k}]\sim c_k (\log T)^{k^2}  
\] 
where $c_k$ is an explicitly given constant. The case of real $k$ was considered by Bondarenko--Heap--Seip \cite{BHS} with refinements in the low moments case coming from Heap \cite{H} and then Gerspach \cite{G} who gave a fairly complete resolution of the problem by applying ideas from Harper's proof of Helson's conjecture \cite{H low}. As a result, we know that 
\begin{equation}
\label{pseudo bounds}
\mathbb{E}[|M_f(T)|^{2k}]\asymp_k (\log T)^{k^2}
\end{equation}
for all real, fixed $k> 0$. Concerning tail bounds, one often requires the moments in a uniform range of $k$. The case of large $k$ was considered in \cite{BH}, however the viable range of $k$ was somewhat lacking for the lower bounds. Here, we are able to fix this deficiency and prove the following. 

\begin{thm}
\label{moments thm}
For $10\leqs k\leqs C\log T/\log\log T$ we have  
\begin{equation}
\label{moment bounds}
\mathbb{E}[|M_f(T)|^{2k}]= (\log T)^{k^2} e^{-k^2\log k-k^2\log\log k+O(k^2)}.
\end{equation}
\end{thm}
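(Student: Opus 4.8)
The plan is to pass to the arithmetic description of the moments. Expanding $|M_f(T)|^{2k}=M_f(T)^k\overline{M_f(T)}^k$, using multiplicativity $f(n_1)\cdots f(n_k)=f(n_1\cdots n_k)$ and the Steinhaus orthogonality $\mathbb{E}[f(a)\overline{f(b)}]=\mathbf{1}_{a=b}$, one gets for integer $k$ the exact identity
\[
\mathbb{E}[|M_f(T)|^{2k}]=\sum_{n\geqs 1}\frac{d_{k,T}(n)^2}{n},\qquad d_{k,T}(n):=\#\{(n_1,\dots,n_k)\in\mathbb{Z}_{>0}^k:n_1\cdots n_k=n,\ n_i\leqs T\ \forall i\}.
\]
Since $d_{k,T}(n)\leqs d_k(n)$ and $d_{k,T}(n)=0$ unless $n\leqs T^k$, the whole problem reduces to a sharp upper bound for $\sum_{n\leqs T^k}d_k(n)^2/n$ and a matching lower bound coming from a carefully chosen sub-sum; the point is that the constraint $n_i\leqs T$ is genuinely stronger than $n\leqs T^k$, and this is what produces the saving $e^{-k^2\log\log k}$ beyond the naive power of $\log T$.

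For the upper bound I would split $n=ab$ into its $k$-smooth part $a$ (prime factors $\leqs k$) and $k$-rough part $b$, so that $\mathbb{E}[|M_f(T)|^{2k}]\leqs(\sum_{a\ k\text{-smooth}}d_k(a)^2/a)(\sum_{b\leqs T^k,\ b\ k\text{-rough}}d_k(b)^2/b)$. The smooth factor equals $\prod_{p\leqs k}\sum_{j\geqs 0}\binom{k+j-1}{j}^2p^{-j}$, and the Cauchy--Schwarz bound $\sum_j\binom{k+j-1}{j}^2p^{-j}\leqs(1-p^{-1/2})^{-2k}$ together with $\sum_{p\leqs k}p^{-1/2}\ll\sqrt{k}/\log k$ shows this is $e^{O(k^2)}$. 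For the rough factor, the elementary inequality $\binom{k+j-1}{j}^2\leqs\binom{k^2+j-1}{j}$ — which follows term by term from $(k+i)^2\leqs(i+1)(k^2+i)$, i.e.\ $0\leqs i(k-1)^2$ — gives $d_k(b)^2\leqs d_{k^2}(b)$, so the rough factor is at most $\sum_{b\leqs T^k,\ b\ k\text{-rough}}d_{k^2}(b)/b=\sum_{b_1\cdots b_{k^2}\leqs T^k,\ b_i\ k\text{-rough}}(b_1\cdots b_{k^2})^{-1}$. Peeling off one $b_i$ at a time and comparing the Dirichlet-type sums to integrals, using that the $k$-rough integers have density $\asymp 1/\log k$ (so that $L:=\sum_{b\leqs T^k,\ k\text{-rough}}b^{-1}\ll k\log T/\log k$), yields a bound of the shape $e^{O(k^2)}\,L^{k^2}/(k^2)!$, and Stirling turns this into $(\log T)^{k^2}e^{-k^2\log k-k^2\log\log k+O(k^2)}$. (Equivalently one may apply Selberg--Delange to $\prod_{p>k}(1-p^{-s})^{-k^2}=\zeta(s)^{k^2}/\prod_{p\leqs k}(1-p^{-s})^{-k^2}$; the peeling recursion is more convenient for uniformity, the accumulated errors being $e^{o(k^2)}$ throughout our range because $k=o(\log T)$.)

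For the lower bound I would restrict the sum to the set $\mathcal{S}$ of squarefree $n$ all of whose prime factors lie in $(k,Q]$, where $Q:=T^{1/(100k)}$, and which in addition satisfy $\log n\leqs\tfrac{9}{10}k\log T$. For such $n$, assign each prime factor of $n$ independently and uniformly to one of $k$ ordered boxes; box $j$ has expected log-product $\log n/k\leqs\tfrac{9}{10}\log T$, every $\log p\leqs\log Q=\tfrac{1}{100k}\log T$ is tiny, and Bernstein's inequality gives $\mathbb{P}(\text{box }j\text{ product}>T)\ll e^{-ck}$; a union bound over the $k$ boxes then shows a positive proportion of the $k^{\omega(n)}$ assignments are admissible, i.e.\ $d_{k,T}(n)\geqs\tfrac12 k^{\omega(n)}$ for all $k\geqs 10$. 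Hence $\mathbb{E}[|M_f(T)|^{2k}]\geqs\tfrac14\sum_{n\in\mathcal{S}}k^{2\omega(n)}/n$. Forgetting the condition $\log n\leqs\tfrac{9}{10}k\log T$ would turn this into $\prod_{k<p\leqs Q}(1+k^2/p)$; under the probability measure $\propto k^{2\omega(n)}/n$ the quantity $\log n$ is a sum of independent bounded terms with mean $\asymp k\log T/100$ and variance $\ll\log^2 T$, so Chebyshev shows a positive proportion of the mass survives, giving $\sum_{n\in\mathcal{S}}k^{2\omega(n)}/n\gg\prod_{k<p\leqs Q}(1+k^2/p)$. Finally a Mertens-type estimate gives $\log\prod_{k<p\leqs Q}(1+k^2/p)=k^2(\log\log Q-\log\log k)+O(k^2)=k^2(\log\log T-\log k-\log\log k)+O(k^2)$, which is exactly the required lower bound.

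The main obstacle is the uniformity in $k$, which is precisely where \cite{BH} fell short. In the upper bound one must carry out the peeling recursion (or the Selberg--Delange asymptotics) with constants independent of $k$ across the whole range $10\leqs k\leqs C\log T/\log\log T$; closing the recursion needs $k^2\ll L\asymp k\log T/\log k$, i.e.\ $k\log k\ll\log T$, which is what forces $C$ to be a suitable absolute constant. In the lower bound the delicate point is that $Q$ must be chosen as $T^{1/\Theta(k)}$, small enough that the box-product Chernoff bound works already at $k=10$ yet large enough that $\log\log Q=\log\log T-\log k+O(1)$ and $Q\geqs k^2$ (again forcing $k\log k\ll\log T$); for $k$ bounded or growing very slowly the lower bound is in any case already contained in \eqref{pseudo bounds}, and for $k$ near the top of the range both sides are merely $e^{O(k^2)}$, so it is enough to run the construction in the intermediate regime.
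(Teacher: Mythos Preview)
Your arithmetic treatment for integer $k$ is correct and takes a route different from the paper's, especially in the lower bound: where the paper restricts to $Y$-smooth squarefrees with $Y=T^{1/(ck)}$ and removes the constraints $n_j\leqs T$ by a Rankin tail estimate, you instead keep the constraints and use a Bernstein bound on random box--assignments to show that a positive proportion of the $k^{\omega(n)}$ factorisations are admissible. Both arguments feed into the same Euler product $\prod_{k<p\leqs T^{1/\Theta(k)}}(1+k^2/p)$, and your version is arguably more direct. Your upper-bound split into $k$-smooth and $k$-rough parts, followed by $d_k^2\leqs d_{k^2}$ and Rankin on the rough factor, is essentially the argument the paper cites from \cite{BH} (phrased there as Rankin plus the tail asymptotic for $\sum_{p>y}p^{-1-\sigma}$); the ``peeling to $L^{k^2}/(k^2)!$'' step you sketch would need more justification, but the Rankin alternative you mention goes through cleanly.

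The genuine gap is that the theorem is stated for \emph{real} $k$, while everything you write rests on the integer-$k$ identity $\mathbb{E}|M_f(T)|^{2k}=\sum_n d_{k,T}(n)^2/n$. Naive interpolation does not close this: for the lower bound, Lyapunov gives $\mathbb{E}|M|^{2k}\geqs(\mathbb{E}|M|^{2m})^{k/m}$ with $m=\lfloor k\rfloor$, whose main term is $(\log T)^{km}=(\log T)^{k^2-O(k)}$, and the deficit $(\log T)^{O(k)}=e^{O(k\log\log T)}$ is not $e^{O(k^2)}$ unless $k\gg\log\log T$. Your appeal to \eqref{pseudo bounds} for ``$k$ bounded or growing very slowly'' does not help either, since that estimate carries a $k$-dependent implied constant and gives no uniformity once $k\to\infty$ with $T$. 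This is exactly the deficiency of \cite{BH} that the paper sets out to repair, and its declared ``main tool'' is Weissler's hypercontractive inequality (Lemma~\ref{weissler lem}): with $K=\lceil k\rceil$ and $\alpha_k=\sqrt{k/K}$ one has
\[
\mathbb{E}|M_f(T)|^{2k}\ \geqs\ \Big(\mathbb{E}\Big|\sum_{n\leqs T}\frac{f(n)\,\alpha_k^{\Omega(n)}}{\sqrt{n}}\Big|^{2K}\Big)^{k/K},
\]
and the point of the weight $\alpha_k^{\Omega(n)}$ is that the Euler-product coefficient in the integer-moment computation becomes $K^2\alpha_k^2=Kk$ rather than $K^2$; after raising to the power $k/K$ one lands exactly on $(\log T)^{k^2}$ with no interpolation loss. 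Without this device (or an equivalent one) your lower bound does not extend to the full real range claimed.
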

We also give some partial results for $k$ in other ranges, including larger $k$ (see Proposition \ref{large moments prop}) and, by detailing Gerspach's \cite{G} proof for low moments, uniformly small $k$ (see Theorem \ref{Gerspach thm}). We remark that the proof of Theorem \ref{moments thm} is fairly elementary and does not require the probabilistic machinery of Harper \cite{H high} who proved bounds of the same quality for the unweighted sum $\sum_{n\leqs T}f(n)$. Our main tool is a hypercontractive inequality due to Weissler \cite{Weiss}.

Another motivation for this work was to investigate the problem of almost sure bounds. Due to its connection with partial sums of the M\"obius function, almost sure bounds for the sum $\sum_{n\leqs T}f(n)$ with $f(n)$ a Rademacher random multiplicative function have been extensively investigated. Improving the initial work of Wintner, in  an unpublished work Erd\"os showed that the almost sure bound in \eqref{as wint} can be improved to $\ll T^{1/2}(\log T)^A$. Hal\'asz \cite{Hal} then gave a significant improvement by proving the bound 
\begin{equation}
\label{hal bound}
\sum_{n\leqs T}f(n)\ll T^{1/2}\exp(c\sqrt{\log_2T\log_3 T}) \qquad \text{a.s.}
\end{equation}
Although the terms $f(n), f(n+1),\cdots $ are not necessarily independent, one might reasonably expect an almost sure bound on the level of the iterated logarithm, which would give $\sqrt{2T\log\log T}$. By carrying out a suggestion of Hal\'asz to remove the term $\log_3 T$ from the exponential in \eqref{hal bound}, Lau--Tenenbaum--Wu \cite{LTW} were in fact able to prove a result on this level by showing that 
\begin{equation}
\label{ltw bound}
\sum_{n\leqs T} f(n)\ll \sqrt{T}(\log\log T)^{2+\epsilon}\qquad \text{a.s.}
\end{equation}
Around the same time, Basquin \cite{bas} independently proved the same bound using a connection with sums over smooth numbers and an interesting observation interpreting these sums as martingales. 

Regarding omega theroems, the current best is due to Harper \cite{Ha0} who, improving on Hal\'asz \cite{Hal}, showed that almost surely
\begin{equation}
\label{harpers bound}
\sum_{n\leqs T} f(n)\neq O(\sqrt{T}(\log\log T)^{-5/2-\epsilon})
\end{equation}
for Rademacher $f(n)$. Likely, many of these results have similar counterparts for Steinhaus random multiplicative functions\footnote{Although perhaps with slightly smaller powers of the double logarithms since there is more chance of cancellation with Steinhaus variables.}. 

Turning to our case, as a first attempt one can apply the Rademacher--Menshov Theorem\footnote{Loosely, this states that if $\sum_{n=1}^{\infty}(\log n)^2\EE| X_n|^2<\infty$, then the series $\sum_{n=1}^\infty X_n$ converges almost surely. Thus $\sum_{n=1}^{\infty}\frac{f(n)}{\sqrt{n}(\log n)^{3/2+\epsilon}}$ converges almost surely, and hence, by partial summation the stated claim follows} to show that $M_f(T)\ll (\log T)^{3/2+\epsilon}$ almost surely.  Somewhat surprisingly, the machinery of Basquin \cite{bas} and Lau--Tenenbaum--Wu \cite{LTW} does not improve this by much since on applying a partial summation argument to \eqref{ltw bound} we get $M_f(T)\ll (\log T)(\log\log T)^{2+\epsilon}$ almost surely (at least, for Rademacher functions).
We are able to give a further improvement over this. 

\begin{thm}\label{theorem almost sure} For all $\epsilon>0$, the following
\begin{equation*}
M_f(T)\ll (\log T)^{1/2+\epsilon}
\end{equation*}
holds almost surely.
\end{thm}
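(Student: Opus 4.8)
The plan is to deduce the almost sure bound from the tail estimate \eqref{large v range} of Theorem~\ref{dist thm} by a Borel--Cantelli argument along a sparse sequence, together with a maximal inequality controlling the fluctuations of $M_f$ between consecutive points of that sequence. Since $M_f$ is constant on $[N,N+1)$, it suffices to bound $\sup_{N\ge 2}|M_f(N)|/(\log N)^{1/2+\epsilon}$ almost surely. Fix a large constant $A=A(\epsilon)$ and let $T_j$ be defined by $\log T_j=j^{A}$. By Borel--Cantelli it is enough to prove that both
\[
p_j:=\mathbb{P}\big(|M_f(T_j)|>(\log T_j)^{1/2+\epsilon}\big)
\qquad\text{and}\qquad
q_j:=\mathbb{P}\Big(\sup_{T_j\le N<T_{j+1}}\big|M_f(N)-M_f(T_j)\big|>(\log T_j)^{1/2+\epsilon}\Big)
\]
are summable over $j$: on the complement of the two $\limsup$ events one has $|M_f(N)|\le 2(\log T_j)^{1/2+\epsilon}\le 2(\log N)^{1/2+\epsilon}$ for every $N$ in the block $[T_j,T_{j+1})$ and every large $j$, which gives the claim after relabelling $\epsilon$.

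The estimate for $p_j$ is immediate from Theorem~\ref{dist thm}. With $V=(\tfrac{1}{2}+\epsilon)\log\log T_j+O(1)$ one checks, for a suitable slowly growing $h$, that $V$ lies in the admissible range $(\log_2 T_j)^{1/2}\log_3 T_j\le V\le \log T_j/(\log\log T_j)^{h(T_j)}$ once $A$ is large, and then \eqref{large v range} gives $p_j=\exp\!\big(-(1+o(1))(\tfrac{1}{2}+\epsilon)^2\log\log T_j\big)=j^{-(1+o(1))(\tfrac{1}{2}+\epsilon)^2A}$, which is summable as soon as $A>4$.

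The main obstacle is the bound for $q_j$, i.e.\ a maximal inequality over a block. The block $[T_j,T_{j+1})$ has multiplicative length $T_{j+1}/T_j=\exp\!\big(Aj^{A-1}+O(j^{A-2})\big)$, so $\mathbb{E}|M_f(N)-M_f(T_j)|^2\le\log(T_{j+1}/T_j)\ll j^{A-1}$ for each $N$ in the block, comfortably below the square of the target $(\log T_j)^{1/2+\epsilon}\asymp j^{A(1/2+\epsilon)}$. The difficulty is that mere orthogonality of the $f(n)/\sqrt n$ only yields the Rademacher--Menshov maximal inequality, which would bound $\mathbb{E}\sup_{T_j\le N<T_{j+1}}|M_f(N)-M_f(T_j)|^2$ by $(\log T_j)^2\log(T_{j+1}/T_j)$; the loss $(\log T_j)^2$ is fatal for any choice of the sequence and merely reproduces $M_f(T)\ll(\log T)^{3/2+\epsilon}$. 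To do better one must exploit the multiplicative structure, by splitting $n$ according to its largest prime factor $P^+(n)$: the $y$-smooth part $\sum_{T_j<n\le N,\,P^+(n)\le y}f(n)/\sqrt n$, with $y$ a small fixed power of $\log T_j$, has $L^2$-norm squared at most $\sum_{P^+(n)\le y}n^{-1}\ll\log y$ and, as a function of the largest-prime-factor parameter, is a martingale, so Doob's inequality handles its supremum with only an $O(1)$ loss, whereas the complementary part factorises as
\[
\sum_{\substack{T_j<n\le N\\ P^+(n)>y}}\frac{f(n)}{\sqrt n}
=\sum_{p>y}\frac{f(p)}{\sqrt p}\sum_{\substack{T_j/p<m\le N/p\\ P^+(m)\le p}}\frac{f(m)}{\sqrt m},
\]
where the inner sum runs over an interval of multiplicative length at most $T_{j+1}/T_j$ and is again controlled by the same martingale, while the outer weight $\sum_p p^{-1}$ over the relevant primes is small. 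This is the circle of ideas used by Basquin \cite{bas} and Lau--Tenenbaum--Wu \cite{LTW} in the proof of \eqref{ltw bound}, here costing only a factor $\mathrm{poly}(\log\log T_j)$ rather than $(\log T_j)^2$; with such a bound $q_j\ll(\log j)^{O(1)}\,j^{A-1}/j^{A(1+2\epsilon)}$ is summable. (Alternatively one can estimate $\mathbb{E}\sup_{T_j\le N<T_{j+1}}|M_f(N)-M_f(T_j)|^{2k}$ for a moderate $k$ by a chaining argument, the key input being a localised analogue of Theorem~\ref{moments thm} — obtained from the same hypercontractive estimate — asserting that over a \emph{short} interval the partial sums of $f(n)/\sqrt n$ have essentially Gaussian high moments, so that the logarithmic scale again costs only a power of $\log\log T_j$.)

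I expect the delicate point to be exactly this maximal inequality: upgrading second- (or moderate-) moment control of the short-interval sums to control of their supremum with only a $\mathrm{poly}(\log\log T)$ loss. A softer alternative to keep in mind runs through the Dirichlet series: since $D(\sigma):=\sum_n f(n)n^{-\sigma}$ converges almost surely for $\sigma>\tfrac{1}{2}$, one has, after a suitable smoothing, $M_f(T)=\tfrac{1}{2\pi i}\int_{(\delta)}D(\tfrac{1}{2}+s)\,T^{s}\,\tfrac{ds}{s}$ with $\delta\asymp 1/\log T$; combining this with a law of the iterated logarithm for $D$ as $\sigma\to\tfrac{1}{2}^{+}$ of the shape $|D(\tfrac{1}{2}+\delta)|\ll\delta^{-1/2}(\log\log(1/\delta))^{1/2+\epsilon}$ (compare \cite{aymone law of iterated logarithm for random dirichlet}) would also give the conclusion, the work then being to control the error in the truncated Perron formula.
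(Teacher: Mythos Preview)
Your Borel--Cantelli framework and treatment of $p_j$ match the paper exactly, but your maximal inequality for $q_j$ is both different from the paper's argument and not adequately justified.

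The paper does \emph{not} use any smooth/rough decomposition or martingale structure. Instead it bounds the fourth moment via Lemma~\ref{lemma jing moment},
\[
\mathbb{E}|M_f(v)-M_f(u)|^4 \le \Big(\sum_{u<n\le v}\frac{\tau(n)}{n}\Big)^2,
\]
and then uses the error term $O(x^{1/3})$ in the Dirichlet divisor problem (Lemma~\ref{uniform divisor lem}) to obtain $\sum_{u<n\le v}\tau(n)/n\ll(\log v)^{4/3}(\log(v/u))^{2/3}$. Interpolating with the second moment gives, for $r=2+2t$ with $t>0$ small,
\[
\mathbb{E}|M_f(v)-M_f(u)|^r \ll (\log T_j)^{8t/3}\Big(\sum_{u<n\le v}\frac{1}{n}\Big)^{1+t/3}.
\]
The point is that the exponent $1+t/3$ exceeds $1$, so Billingsley's maximal inequality (Lemma~\ref{lemma billingsley}, which requires $\alpha>1/2$) applies with \emph{no logarithmic loss at all}. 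With $T_j=\exp(j^4)$ and $t=\epsilon/2$ this makes $q_j$ summable. The whole argument is elementary number theory plus a classical maximal inequality; no multiplicative decomposition is needed.

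Your proposed route has a genuine gap. You claim that the $y$-smooth part, ``as a function of the largest-prime-factor parameter, is a martingale, so Doob's inequality handles its supremum with only an $O(1)$ loss''. But the supremum you need is over the endpoint $N\in[T_j,T_{j+1})$, not over the smoothness parameter; the martingale runs in the prime direction, and Doob's inequality there says nothing about $\sup_N$. The same issue recurs in the rough part: after writing $n=p\cdot m$ with $p=P^+(n)$, the inner sum over $m$ still depends on $N$, and you have not explained how to pass from pointwise $L^2$ control to control of the supremum over $N$ without a Rademacher--Menshov-type loss. The paper in fact remarks (in the introduction) that the Basquin/Lau--Tenenbaum--Wu machinery, applied to $M_f$, yields only $(\log T)(\log\log T)^{2+\epsilon}$; your sketch does not show how to beat this. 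Your parenthetical alternative of a chaining argument with higher moments is closer in spirit to what the paper does, but the precise mechanism---a Billingsley-type inequality fed by the divisor error term---is missing from your proposal.
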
  

In terms of lower bounds, we prove the following. 
\begin{thm}
\label{teorema omega bound 1}
For all $L>0$ the following 
\begin{equation*}
\limsup_{T\to\infty}\frac{|M_f(T)|}{\exp((L+o(1))\sqrt{\log\log T})}\geq 1
\end{equation*}
holds almost surely.
\end{thm}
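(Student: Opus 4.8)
The plan is to show that for each fixed $L>0$, almost surely $|M_f(T)|\geq\exp(L\sqrt{\log\log T})$ holds for infinitely many $T$ along a suitable sparse sequence $T_j\uparrow\infty$; the assertion for all $L>0$ simultaneously then follows by intersecting over a countable cofinal set of values of $L$ (the events are nested in $L$), and the displayed $o(1)$ is immaterial --- one may take it to be identically $0$. The input that makes this feasible is the positive-probability lower bound \eqref{small v range}: since $\exp(L\sqrt{\log\log T})=\exp((L\sqrt 2)\sqrt{\tfrac12\log\log T})$, applying \eqref{small v range} with its parameter equal to $L\sqrt 2$ gives
\[
\mathbb{P}\big(|M_f(T)|\geq\exp(L\sqrt{\log\log T})\big)\ \geq\ c(L):=\int_{L\sqrt 2}^{\infty}e^{-x^2/2}\,dx\ >\ 0
\]
for all large $T$. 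Hence, for any sequence $T_j\to\infty$ the events $A_j:=\{|M_f(T_j)|\geq\exp(L\sqrt{\log\log T_j})\}$ satisfy $\sum_j\mathbb{P}(A_j)=\infty$ trivially, and everything reduces to upgrading this to $\mathbb{P}(A_j\ \mathrm{i.o.})=1$, for which we need a suitable amount of independence among the $A_j$.

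To build in that independence I would take $T_j$ increasing fast enough that $\log T_{j+1}\geq(\log T_j)^2$, and set $w_j:=\exp(\sqrt{\log T_j})$. Then: (i) the intervals $(w_j,T_j]$ are pairwise disjoint, since $w_{j+1}\geq T_j$; (ii) $M_f(T_j)$ depends only on the variables $f(p)$ with $p\leq T_j\leq w_{j+1}$; and (iii) the ``fresh'' primes in $(w_j,T_j]$ still carry full variance, $\sum_{w_j<p\leq T_j}1/p=\tfrac12\log\log T_j+O(1)\to\infty$. Let $\mathcal{G}_j:=\sigma\big(f(p):p\leq w_{j+1}\big)$, an increasing filtration with $A_j\in\mathcal{G}_j$ by (ii). Since $\mathcal{G}_{j-1}=\sigma(f(p):p\leq w_j)$ and the family $(f(p))_{w_j<p\leq T_j}$ is independent of $\mathcal{G}_{j-1}$, we have $\mathbb{P}(A_j\mid\mathcal{G}_{j-1})=\mathbb{P}(|M_f(T_j)|\geq\exp(L\sqrt{\log\log T_j})\mid f(p),\ p\leq w_j)$.

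The heart of the argument is therefore the \emph{conditional} lower bound
\[
\mathbb{P}\big(|M_f(T)|\geq\exp(L\sqrt{\log\log T})\mid f(p),\ p\leq w\big)\ \geq\ c(L)\qquad\text{almost surely}
\]
for $w=\exp(\sqrt{\log T})$: the primes $p\in(w,T]$ force $|M_f(T)|$ to be large with probability bounded below, whatever the values of $f(p)$ for $p\leq w$. This is the step I expect to be the main obstacle, since it is not a formal consequence of \eqref{small v range}. I would obtain it by revisiting the proof of \eqref{small v range} --- which, following Harper \cite{H low} and Gerspach \cite{G}, conditions on low-frequency variables and reduces matters to a Gaussian-type lower bound (Lemma \ref{lemma gaussian lower bound}) for an essentially independent sum supported on primes in a range $(z,T]$ --- and checking that the resulting estimate is uniform in the conditioned configuration; conditioning on the still-smaller set $\{p\leq w\}$ can then only help, by the tower property, since $w$ lies far below any fixed power of $T$ and, by (iii), $(w,T]$ retains $\asymp\log\log T$ worth of primes. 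Granting this, $\mathbb{P}(A_j\mid\mathcal{G}_{j-1})\geq c(L)$ almost surely, whence $\sum_j\mathbb{P}(A_j\mid\mathcal{G}_{j-1})=\infty$ almost surely, and L\'evy's conditional Borel--Cantelli lemma yields $\mathbb{P}(A_j\ \mathrm{i.o.})=1$, which completes the proof.
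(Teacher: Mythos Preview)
Your argument has a real gap at exactly the point you flag: the conditional lower bound
\[
\mathbb{P}\big(|M_f(T)|\geq e^{L\sqrt{\log\log T}}\,\big|\,f(p),\ p\leq w\big)\ \geq\ c(L)\qquad\text{a.s.}
\]
with $w=\exp(\sqrt{\log T})$. This is not only not a formal consequence of \eqref{small v range}; with your choice of $w$ it is in fact false for a positive-measure set of configurations. Tracing through Section~\ref{subsection small range}, the decisive quantity is the prime sum $\Sigma_T$ of Lemma~\ref{lemma gaussian lower bound}, whose total variance is $\tfrac12\log\log T+O(\log_3 T)$. Conditioning on $(f(p))_{p\leq w}$ freezes the portion of $\Sigma_T$ coming from $p\leq w$; since $\sum_{p\leq w}p^{-1}=\tfrac12\log\log T+O(1)$, this fixed piece has standard deviation of the same order $\asymp\sqrt{\log\log T}$ as the threshold $V$, and (by the law of the iterated logarithm for $\sum_{p\leq x}\cos\theta_p/\sqrt{p}$) will be negative of this size for a positive fraction of realisations. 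For such realisations the conditional probability of $A_j$ is strictly smaller than $c(L)$, and because your filtration is increasing, a bad configuration of small primes contaminates \emph{all} later $j$. So L\'evy's extension of Borel--Cantelli cannot be invoked with a uniform lower bound, and even the weaker requirement $\sum_j\mathbb{P}(A_j\mid\mathcal{G}_{j-1})=\infty$ a.s.\ is not established. One could try to repair this by taking $w_j$ much smaller, say $w_j=\exp((\log T_j)^{o(1)})$, so that $\log\log w_j=o(\log\log T_j)$; but then one must also re-examine every Chebyshev step in Section~\ref{subsection small range} conditionally, and the argument becomes substantially heavier than what you have written.

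The paper sidesteps all of this with a different and much shorter argument. Rather than engineering approximate independence along a subsequence, it shows directly (Lemma~\ref{lemma tail event}) that the event
\[
\mathcal{A}_\lambda=\big\{|M_f(T)|\geq\exp((1+o(1))\lambda(T))\text{ for infinitely many integers }T\big\},\qquad \lambda(T)=L\sqrt{\log\log T},
\]
is a \emph{tail} event in $(f(p))_p$: for each fixed $y$ one proves $\mathcal{A}_\lambda=\mathcal{B}_{y,\lambda}$, where $\mathcal{B}_{y,\lambda}$ is the corresponding event for the truncated multiplicative function $f_y$ supported on primes $>y$. The proof is an elementary Dirichlet--convolution and pigeonhole argument (writing $f_y=g_y*f$ and $f=h_y*f_y$), using only the crude smooth-number bound $\Psi(x,y)\ll_\epsilon x^\epsilon$ for $y$ fixed. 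Once $\mathcal{A}_\lambda$ is known to be a tail event, the unconditional bound \eqref{small v range} gives $\mathbb{P}(\mathcal{A}_\lambda)>0$, and Kolmogorov's zero--one law immediately yields $\mathbb{P}(\mathcal{A}_\lambda)=1$. No conditional estimates, no subsequence construction, and no reopening of the proof of \eqref{small v range} are required.
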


Thus, we have a considerable gap in our upper and lower bounds. The upper bound of Theorem \ref{theorem almost sure} is consistent with squareroot cancellation and represents the behaviour of a typical random sum. Indeed, one of the main inputs in the proof is a bound for the $(2+\epsilon)$th moment. If one could find a way to effectively input lower moments this could probably be improved, however we have not been able to do so. We note that from Chebyshev's inequality and bounds for low moments in \eqref{pseudo bounds} we get that 
\[
\mathbb{P}\Big(|M_f(T)|\leqs (\log T)^{\epsilon}\Big)=1-o(1)
\]
as $T\to\infty$ further suggesting that improvements of Theorem \ref{theorem almost sure} might be possible.

The lower bound of Theorem \ref{teorema omega bound 1} better displays the multiplicative nature of the problem. It suggests the sum is potentially being dictated by its Euler product since \[\prod_{p\leqs T}(1-f(p)p^{-1/2})^{-1}\approx \exp(\sum_{p\leqs T}f(p)p^{-1/2})\] and by the law of the iterated logarithm \cite{Kol} we have 
\[
\limsup_{T\to\infty} \frac{\Re\sum_{p\leqs T}f(p)p^{-1/2}}{\sqrt{\log_2 T\log_4T}}=1.
\]     

In any case, our proof of Theorem \ref{teorema omega bound 1} certainly relies on a connection with the Euler product. 
One of the main inputs is that the event $\mathcal{A}$ in which $M_f(T)\geq \exp((L+o(1))\sqrt{\log\log T})$ for infinitely many integers $T>0$ is a tail event, in the sense that any change to a finite set of values $(f(p))_{p\in\mathcal{S}}$, with $\mathcal{S}$ is a finite subset of primes, does not change the outcome. Since the values $(f(p))_p$ are independent, by the Kolmogorov zero--one law, $\mathcal{A}$ has probability either $0$ or $1$. By the Gaussian lower bound \eqref{small v range}, $\mathcal{A}$ must have positive probability, and hence, actually has probability $1$.

It is interesting to note that, again, the machinery of the bound \eqref{harpers bound} gives little more than $M_f(T)\neq O(1)$ almost surely, at least with a direct application.

\noindent \textbf{Acknowledgements.} The first author would like to thank Max Planck Institute for Mathematics for their warm hospitality during a visit in February 2020 (when this project started), and also the PPG/Mat - UFMG and CNPq (grant number 452689/2019-8) for financial support.


\section{Proof of Corollary \ref{sample max lem}} 

In this short section we deduce Corollary \ref{sample max lem} from Theorem \ref{dist thm}. Let us first deal with \eqref{P bound}.
Set $V=c\sqrt{\log T\log\log T}$ with $c>0$. By the independence of the trials, 
\begin{align*}
&
\mathbb{P}\big(\max_{1\leqs j\leqs T\log T}\big|M_{f_j}(T)\big|\leqs e^V\big)
= 
\mathbb{P}\big(\big|M_{f}(T)\big|\leqs e^V\big)^{T\log T}
\\ = & 
\Big(1-\mathbb{P}\big(\big|M_{f}(T)\big|> e^V\big)\Big)^{T\log T}
\\
= &
\exp\bigg(-T(\log T)\bigg[\mathbb{P}\Big(\big|M_{f}(T)\big|> e^V\big)+O\Big(\mathbb{P}\big(\big|M_{f}(T)\big|> e^V\big)^2\Big)\bigg] \bigg)
\end{align*}
By Theorem \ref{dist thm} we have 
\begin{align*}
\mathbb{P}\Big(\big|M_{f}(T)\big|> e^V\big)
= &
 \exp\Big(-(1+o(1))c^2\log T\log\log T/(\log(\log T/c\sqrt{\log T\log_2 T}))\Big)
 \\
 = &\exp(-(1+o(1))2c^2\log T)
\end{align*}
This is $o(1/T\log T)$ provided $c> 1/\sqrt{2}$ and hence our initial probability is $1-o(1)$. If $c<1/\sqrt{2}$ then our initial probability is $o(1)$. 
A similar proof gives \eqref{short interval bound}.


\section{Moment bounds} 

In this section we prove Theorem \ref{moments thm} and give some additional bounds for the moments in other ranges of $k$. We begin by proving Theorem \ref{moments thm}.

\subsection{Proof of Theorem \ref{moments thm}}

The implicit upper bound of Theorem \ref{moments thm} is from \cite{BH} and follows from Rankin's trick along with asymptotics for the tail sum $\sum_{p\geqs y}p^{-1-\sigma}$. As mentioned in the introduction, we only need to improve the range of $k$ in the lower bounds. We show that this, in fact, follows from the same essential ingredient which was a hyper-contractive inequality due to Weissler \cite{Weiss}. This can be stated as follows. For $\rho>0$ and a given random sum 
\[
F(T)=\sum_{n\leqs T} a_nf(n)
\]
with deterministic $a_n\in\mathbb{C}$, let
\[
F_{\rho}(T)=\sum_{n\leqs T} a_n f(n)\rho^{\Omega(n)}
\] 
where $\Omega(n)$ denotes the number of not-necessarily-distinct prime factors of $n$. Then the following appears in \cite[section 3]{bayart} (although in a slightly different form).

\begin{lem}[Weissler's inequality]\label{weissler lem} Let $0<p\leqs q$ and let $0\leqs \rho\leqs \sqrt{p/q}$. Then 
\[
\mathbb{E}[|F_{\rho}(T)|^q]^{1/q}\leqs \mathbb{E}[|F(T)|^p]^{1/p}.
\]
\end{lem}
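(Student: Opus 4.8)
The plan is to recognise the two sides as $L^q$ and $L^p$ norms on a finite‑dimensional polytorus and then reduce the statement to the classical one‑variable Weissler inequality by tensorisation. Since $F(T)=\sum_{n\leqs T}a_nf(n)$ involves only the Steinhaus variables $f(p)$ with $p\leqs T$, put $m=\pi(T)$ and let $z=(z_p)_{p\leqs T}$ run over the $m$‑torus $\mathbb{T}^m$. As $(f(p))_{p\leqs T}$ is distributed according to normalised Haar measure on $\mathbb{T}^m$, identifying $f(n)$ with the monomial $z^n:=\prod_p z_p^{v_p(n)}$ (with $v_p(n)$ the exponent of $p$ in $n$) yields, for every $s>0$,
\[
\EE[|F(T)|^s]=\int_{\mathbb{T}^m}|F(z)|^s\,dz,\qquad F(z):=\sum_{n\leqs T}a_n z^n,
\]
so that $F$ becomes an analytic (Taylor) polynomial on the polydisc. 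Since $\rho^{\Omega(n)}z^n$ is precisely what one gets from $z^n$ by replacing each $z_p$ with $\rho z_p$, the function $F_\rho$ corresponds to $R_\rho F$, where $R_\rho$ multiplies the $n$th Taylor coefficient by $\rho^{\Omega(n)}$; this $R_\rho$ is the $m$‑fold tensor power of the one‑variable operator $g(z)=\sum_{k\geqs0}c_kz^k\mapsto g(\rho z)$. Thus Lemma~\ref{weissler lem} is exactly the hypercontractive bound $\|R_\rho\|_{H^p(\mathbb{T}^m)\to H^q(\mathbb{T}^m)}\leqs 1$ for $0\leqs\rho\leqs\sqrt{p/q}$, where $H^p$ is the Hardy space of the polydisc.

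First I would establish the one‑variable case: for $g(z)=\sum_{k\geqs 0}c_k z^k$ and $p\leqs q$ one has $\|g(\rho z)\|_{L^q(\mathbb{T})}\leqs\|g\|_{L^p(\mathbb{T})}$ when $0\leqs\rho\leqs\sqrt{p/q}$. One may assume $\rho=\sqrt{p/q}$: if $\rho=s\sqrt{p/q}$ with $0\leqs s\leqs1$ then $g(\rho z)=\phi(sz)$ with $\phi(z)=g(\sqrt{p/q}\,z)$, and $\phi\mapsto\phi(s\,\cdot)$ is convolution with a nonnegative Poisson kernel of mass $1$, hence an $L^q(\mathbb{T})$‑contraction on the Hardy space. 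The borderline inequality $\|g(\sqrt{p/q}\,z)\|_{L^q(\mathbb{T})}\leqs\|g\|_{L^p(\mathbb{T})}$ is Weissler's theorem \cite{Weiss}, proved there by combining the sharp logarithmic Sobolev inequality on the circle with Gross's equivalence between log‑Sobolev inequalities and hypercontractivity, applied to the radial‑dilation semigroup $T_tg(z)=g(e^{-t}z)$ (whose generator is, up to sign, the number operator $z\,\tfrac{d}{dz}$). This is the only genuinely analytic input and is the step I would expect to be the main obstacle were one to aim for a fully self‑contained proof; in the write‑up I would simply cite it, since this is precisely the one‑variable statement behind \cite[section 3]{bayart}.

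It then remains to tensorise, which I would carry out by induction on $m$. Writing $F=F(z_1,z')$ with $z'=(z_2,\dots,z_m)$, one applies the one‑variable bound in the single variable $z_1$ (with $z'$ held fixed) and the $(m-1)$‑variable bound in $z'$ (with $z_1$ held fixed), and combines the two estimates by Minkowski's integral inequality for the exponent $q/p\geqs1$; this is the only place the hypothesis $p\leqs q$ is used. As everything in sight is a polynomial there are no integrability or convergence issues, so no approximation argument is needed. This tensorisation of a hypercontractive estimate is entirely standard and presents no difficulty once the one‑variable case is available; combining the three steps gives $\|R_\rho F\|_{L^q(\mathbb{T}^m)}\leqs\|F\|_{L^p(\mathbb{T}^m)}$, which by the dictionary above is the assertion of the lemma. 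In short, the substance of the argument is the passage to the polytorus together with the (cited) one‑variable Weissler inequality — which is presumably why in \cite[section 3]{bayart} the result is obtained in essentially one step.
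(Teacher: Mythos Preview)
Your proposal is correct and follows exactly the route the paper indicates: the Bohr correspondence to pass to a polynomial on the polytorus, Weissler's one-variable hypercontractive estimate as the analytic input, and Bayart's tensorisation via Minkowski's inequality to lift it to several variables. The paper itself does not give a proof but simply cites these ingredients, so your write-up is in fact more detailed than what appears there.
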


This was originally proved for power series in one variable on the unit disk by Weissler \cite{Weiss}. Bayart \cite{bayart} then extended this to  multivariable power series using Minkowski's inequality. By the Bohr correspondence, these results apply to Dirichlet polynomials, or in our case, sums of random multiplicative functions.


\begin{lem}\label{lower moments lem} Let $k,T\geqs 10$. Then there exists a positive, absolute constant $A$ such that  
\[
{\mathbb{E}\big[\big|M_{f}(T)\big|^{2k}\big]}
\geqs 
(\log T)^{k^2}e^{-k^2\log k-k^2\log_2 k- Ak^2}.
\]
If $0<k\leqs 10$ then we may replace $e^{-k^2\log k-k^2\log_2 k- Ak^2}$ by some positive absolute constant $C$.
\end{lem}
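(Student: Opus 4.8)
The plan is to apply Weissler's inequality (Lemma \ref{weissler lem}) in the direction that bounds a high moment of $M_f(T)$ below by a lower-order moment of a contracted polynomial, and then to estimate that moment from below using a truncated random Euler product whose support is built into $\{n\le T\}$. Concretely, I would take $F=M_f(T)$, $p=2k$, $q=2\ell$ with $\ell\ge k$ an integer, and $\rho=\sqrt{k/\ell}=\sqrt{p/q}$ in Lemma \ref{weissler lem}; since $(M_f)_\rho(T)=\sum_{n\le T}\rho^{\Omega(n)}f(n)n^{-1/2}$, this yields
\[
\mathbb{E}\big[|M_f(T)|^{2k}\big]\ \ge\ \mathbb{E}\big[|(M_f)_\rho(T)|^{2\ell}\big]^{k/\ell}.
\]
So it would suffice to lower bound $\mathbb{E}[|(M_f)_\rho(T)|^{2\ell}]$ uniformly, and then optimise over $\ell$ (hence over $\rho$). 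As a consistency check, expanding the original moment directly gives $\mathbb{E}[|M_f(T)|^{2k}]=\sum_{N}d_k(N;\le T)^2/N\ge\sum_{N\le T}d_k(N)^2/N$, and the sought bound is exactly a $k$-uniform lower bound for this Dirichlet sum; passing through Weissler is a way to produce such uniformity.

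For the inner moment I would compare $(M_f)_\rho(T)$ with a truncated Euler product
\[
D=\prod_{p\le y}\Big(\sum_{0\le a\le J_p}\rho^{a}p^{-a/2}f(p)^{a}\Big),
\]
where $y$ and the exponents $J_p$ are chosen so that every integer in the support of $D$ is $\le T$, that is $\sum_{p\le y}J_p\log p\le\log T$. The coefficient of $f(n)$ in $D$ is $\rho^{\Omega(n)}n^{-1/2}$ on its support and $0$ otherwise, so $(M_f)_\rho(T)-D$ has non-negative coefficients; since for random multiplicative sums $A,B$ with non-negative coefficients an even-moment expansion gives $\mathbb{E}[|A+B|^{2\ell}]\ge\mathbb{E}[|A|^{2\ell}]$, one gets
\[
\mathbb{E}\big[|(M_f)_\rho(T)|^{2\ell}\big]\ \ge\ \mathbb{E}\big[|D|^{2\ell}\big]\ =\ \prod_{p\le y}\ \sum_{0\le m\le J_p}\binom{\ell-1+m}{m}^{2}\Big(\frac{\rho^2}{p}\Big)^{m}.
\]
The gain is that, $D$ being a product of independent factors, this lower bound has no truncation error: each local factor is a partial sum of a hypergeometric series whose terms peak near $m\approx\ell\rho/\sqrt p$, and keeping enough of them — which the budget $\sum_p J_p\log p\le\log T$ permits when $J_p$ is taken large for small $p$ and down to $1$ near the cut-off — lets the product accumulate close to a full power of $\log y$ in the exponent $\rho^2\ell^2=k\ell$, up to an arithmetic loss concentrated on the primes $p\le k^2$ where the local factor departs from $1+k\ell/p$.

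Raising the result to the Weissler power $k/\ell$ converts $k\ell$ back to $k^2$, and one then chooses $\ell$, $y$ and $\{J_p\}$ to balance three competing demands: $\ell$ large enough that the Weissler step is non-trivial, $y$ large enough (equivalently $\rho$ not too small) that enough primes are present to build the correct power of $\log T$, and all of this respecting $\sum_{p\le y}J_p\log p\le\log T$. Optimising, the main term $(\log T)^{k^2}$ should survive while the losses assemble into $e^{-k^2\log k-k^2\log_2 k+O(k^2)}$ — the $e^{-k^2\log_2 k}$ being the arithmetic contribution of the small primes, and $e^{-k^2\log k}$ reflecting that $y$ (hence $\rho$) cannot be taken too large while keeping the support inside $\{n\le T\}$. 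I expect this balancing to be the main obstacle: it is where the flexibility of Weissler's inequality to trade the exponent $2\ell$ against the damping $\rho$ must be exploited, and where extending the admissible range of $k$ up to $\log T/\log_2 T$ (beyond the range in \cite{BH}) forces the $p$-dependent choice of the $J_p$ rather than a single multiplicity cut-off. Finally, for the trivial range $0<k\le10$ the statement is immediate from the known order of magnitude $\mathbb{E}[|M_f(T)|^{2k}]\asymp_k(\log T)^{k^2}$ in \eqref{pseudo bounds}, uniformised over the compact $k$-interval (or, crudely, from $\mathbb{E}[|M_f(T)|^{2k}]\ge d_k(1)^2=1$).
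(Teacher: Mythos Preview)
Your use of Weissler's inequality is the right starting point, but the subsequent step — comparing $(M_f)_\rho(T)$ to a product $D$ whose \emph{entire support} lies in $[1,T]$ — is too lossy to recover the target bound. The constraint $\sum_{p\le y}J_p\log p\le\log T$ forces, in particular, that if $J_p\ge 1$ for all $p\le y$ then $\theta(y)\le\log T$, i.e.\ $y\le(1+o(1))\log T$. Lower-bounding each local factor by its first two terms gives $\prod_{p\le y}(1+k\ell/p)$, and the part of this product over primes $p>k\ell$ contributes at most $\exp(k\ell\log(\log y/\log(k\ell)))$, hence after the Weissler exponent $k/\ell$ you obtain at most $(\log y/\log(k\ell))^{k^2}\le(\log\log T)^{k^2}$. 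Including higher powers $m>1$ in the local factors does not rescue this: the same budget constraint then forces fewer primes, and one checks (e.g.\ the single-prime case gives only $e^{O(k)}$) that no allocation of the budget produces anything close to $(\log T)^{k^2}$. The essential obstruction is that a product-form $D$ has support equal to a multiplicative box, and such a box fits inside $[1,T]$ only if its maximal element does — an enormously stronger condition than the one actually present in the $2\ell$-th moment expansion, where only each \emph{individual} factor $n_j$ is constrained to be $\le T$.

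The paper exploits exactly this: it takes $\ell=\lceil k\rceil=:K$ (so $\rho=\sqrt{k/K}$ is essentially $1$ and no optimisation over $\ell$ is needed), expands $\mathbb{E}[|(M_f)_\rho(T)|^{2K}]$ directly as a sum over $2K$-tuples with $n_j\le T$, and restricts by positivity to squarefree, $Y$-smooth $n_j$. The individual constraints $n_j\le T$ are then removed by Rankin's trick: the tail for $n_1>T$ is bounded by $T^{-\delta}$ times a shifted Euler product, and choosing $\delta=1/\log Y$, $Y=T^{1/(ck)}$ makes the ratio of tail to main term $\le 1/2$. The unconstrained main term is the clean product $\prod_{p\le Y}(1+Kk/p)$, evaluated as $(\log T)^{Kk}e^{-Kk\log k-Kk\log_2 k+O(k^2)}$, where the $e^{-k^2\log k}$ loss comes from $Y=T^{1/(ck)}$ (not from $\rho$ as you suggest) and the $e^{-k^2\log_2 k}$ from the primes $p\le K^2$. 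For $0<k\le 10$, note that invoking \eqref{pseudo bounds} is somewhat circular here, since this lemma is precisely what supplies the uniform lower bound \eqref{uniform lower} used later; the paper instead runs the same argument with $Y=T^{1/c}$.
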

\begin{proof}
By Weissler's inequality with $p=2k$, $q=2\lceil k\rceil$ and $\rho=\alpha_k:=\sqrt{k/\lceil k\rceil}$ we have for real $k> 0$,
\begin{equation}\label{weiss}
{\mathbb{E}\big[\big|M_{f}(T)\big|^{2k}\big]}\geqs {\mathbb{E}\bigg[\bigg|\sum_{n\leqs T}\frac{f(n)\alpha_k^{\Omega(n)}}{\sqrt{n}}\bigg|^{2\lceil k\rceil}\bigg]}^{k/\lceil k\rceil}.
\end{equation}
Let $K=\lceil k\rceil$ to ease notation. Then the expectation on the right hand side is given by 
\begin{align*}
{\mathbb{E}\bigg[\bigg|\sum_{n\leqs T}\frac{f(n)\alpha_k^{\Omega(n)}}{\sqrt{n}}\bigg|^{2K}\bigg]}
= &
\sum_{\substack{n_1\cdots n_K=n_{K+1}\cdots n_{2K}\\ n_j\leqs T}}\frac{\alpha_k^{\Omega(n_1)+\cdots \Omega(n_{2K})}}{(n_1\cdots n_{2K})^{1/2}}
\\
\geqs &
\sideset{}{^*}\sum_{\substack{n_1\cdots n_K=n_{K+1}\cdots n_{2K}\\ n_j\leqs T,\,\,n_j\in S(Y)}}\frac{\alpha_k^{\Omega(n_1)+\cdots \Omega(n_{2K})}}{(n_1\cdots n_{2K})^{1/2}} 
\end{align*}
where $\sum{}^*$ denotes the sum where the products $n_1\cdots n_k$ and $n_{k+1}\cdots n_{2k}$ are restricted to squarefree numbers and $S(Y)$ is the set of $Y$-smooth numbers with $Y\leqs T$. We proceed to remove the condition $n_j\leqs T$ in each summation variable. 

For a given $\delta>0$, the tail sum for $n_1$ takes the form 
\begin{align*}
\sideset{}{^*}\sum_{\substack{n_1\cdots n_K=n_{K+1}\cdots n_{2K}\\ n_1> T,\,\,n_j\leqs T,\,\,n_j\in S(Y)}}\frac{\alpha_k^{\Omega(n_1)+\cdots \Omega(n_{2K})}}{(n_1\cdots n_{2K})^{1/2}}
\leqs  &
\frac{1}{T^\delta}
\sideset{}{^*}\sum_{\substack{n_1\cdots n_K=n_{K+1}\cdots n_{2K}\\ n_j\in S(Y)}}\frac{\alpha_k^{\Omega(n_1)+\cdots \Omega(n_{2K})}}{n_1^{1/2-\delta}(n_2\cdots n_{2K})^{1/2}}
\\
= &
\frac{1}{T^\delta}
\prod_{p\leqs Y}\bigg(1+\frac{\alpha_k^2(p^\delta+K-1)\cdot K}{p}\bigg)
\\
= &
\frac{1}{T^\delta}
\prod_{p\leqs Y}\bigg(1+\frac{(p^\delta+K-1)\cdot k}{p}\bigg)
\end{align*}
where in the second line we have used that the condition $n_1\cdots n_K=n_{K+1}\cdots n_{2K}$ is multiplicative. By symmetry we acquire $2K$ such error terms. After removing the restrictions $n_j\leqs T$ in the main term we may write the resulting sum as an Euler product whose coefficient of $p^{-1}$ is $K^2\alpha_k^2=Kk$. Thereby, we obtain the lower bound
\[
\prod_{p\leqs Y}\bigg(1+\frac{Kk}{p}\bigg)-\frac{2K}{T^\delta}\prod_{p\leqs Y}\bigg(1+\frac{(p^\delta+K-1)\cdot k}{p}\bigg).
\]
In order to demonstrate the second term is little `oh' of the main term we consider the ratio
\[
\frac{2K}{T^\delta}\prod_{p\leqs Y}\frac{\big(1+(p^\delta+K-1)k/p\big)}{1+Kk/p}
\leqs 
\frac{2K}{T^\delta}\exp\Big(k\sum_{p\leqs Y}\frac{p^{\delta}-1}{p}\Big)
=
\frac{2K}{T^\delta}\exp\Big(O(k \delta  \log Y)\Big)
\]
provided $\delta\ll 1/\log Y$. If $k\geqs 10$ choose $\delta=1/\log Y$ and $Y=T^{1/(ck)}$ for some $c$. Then this ratio becomes $\exp(-ck+O(k))$ which is $\leqs1/2$ provided $c$ is large enough. If $0<k\leqs 10$ then we choose $\delta=1/\log Y$ and $Y=T^{1/c}$ for some $c$. In this case  the ratio is $\exp(-c+O(k))$ which again is $\leqs1/2$ provided $c$ is large enough. With these choices we acquire the lower bound
\begin{align*}
\frac{1}{2}\prod_{p\leqs Y}\bigg(1+\frac{Kk}{p}\bigg)
= &
\frac{1}{2}\prod_{p\leqs K^2}\bigg(1+\frac{Kk}{p}\bigg)\prod_{K^2<p\leqs Y}\bigg(1+\frac{Kk}{p}\bigg)
\\
= &
e^{O(K^2)} \exp\bigg(Kk\sum_{K^2<p\leqs Y}\frac{1}{p}+O\Big(k^4\sum_{p>K^2}p^{-2}\Big)\bigg)
\end{align*}
where we have used $\pi(K^2)\ll K^2/\log K$ in the first product. Using this again for the error term in the exponential, when $k\geqs 10$ we acquire the lower bound 
\[
e^{O(K^2)} \bigg(\frac{\log Y}{2\log K}\bigg)^{Kk}=(\log T)^{Kk}e^{-Kk\log k-Kk\log\log k+O(k^2)}
\]
since $Y=T^{1/ck}$ in this case. After raising this to the power $k/K$ the result follows in this range of $k$ by \eqref{weiss}. For $0<k\leqs 10$ the result follows similarly. 
\end{proof}


\subsection{Larger $k$}

\begin{prop}\label{large moments prop}
When $k\geqs c\log T/\log\log T$ we have 
\begin{equation}
\label{moment bounds large}
{\mathbb{E}\big[\big|M_{f}(T)\big|^{2k}\big]}
\leqs 
e^{Ck^2}\max(1, (\log T)^{k^2}e^{-k^2\log k}).
\end{equation}
for some positive absolute $C$.
\end{prop}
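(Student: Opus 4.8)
\textbf{Proof proposal for Proposition \ref{large moments prop}.}

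The plan is to run a Rankin-type argument, exactly as in the upper bound half of Theorem \ref{moments thm} (which comes from \cite{BH}), but to be careful about what happens when $k$ is so large that the naturally occurring smoothness parameter collapses. Writing out the $2k$-th moment as a diagonal sum over $n_1\cdots n_K = n_{K+1}\cdots n_{2K}$ and dropping the constraint $n_j\leqs T$ at the cost of a factor $T^{\delta}$ per variable (here I would take $K=\lceil k\rceil$ or just work with real $k$ via a convexity/interpolation step), one is led to bound a quantity of the shape
\[
\mathbb{E}\big[|M_f(T)|^{2k}\big]\ll T^{2k\delta}\prod_{p\leqs T}\bigg(1+\frac{k^2 p^{\delta}}{p}+O\Big(\frac{k}{p^{1-\delta}}\Big)\bigg),
\]
and then to optimise over $\delta>0$. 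Taking logarithms, the main term is $2k\delta\log T + k^2\sum_{p\leqs T}p^{\delta-1}$, and since $\sum_{p\leqs T}p^{\delta-1}\asymp T^{\delta}/(\delta\log T)$ when $\delta\log T\to\infty$, the quantity to minimise is roughly $2k\delta\log T + k^2 T^{\delta}/(\delta\log T)$.

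The key case distinction is whether the unconstrained optimum of $\delta$ lies in the admissible range. In the regime $k\leqs C\log T/\log\log T$ of Theorem \ref{moments thm} the optimal $\delta$ is of order $1/(k\log k\cdot\text{something})$, i.e.\ $\delta\log T$ is large but $T^{\delta}$ is a genuine power gain, producing the $(\log T)^{k^2}e^{-k^2\log k}$ shape. But once $k\gtrsim \log T/\log\log T$ the ``optimal'' $\delta$ would force $T^{\delta}$ to be bounded — equivalently $\delta\asymp 1/\log T$ — and then there is essentially no further savings to extract from Rankin's trick. So the strategy is: if $k\geqs c\log T/\log\log T$, simply \emph{choose} $\delta = 1/\log T$ (or a small constant multiple thereof). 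Then $T^{2k\delta}=e^{O(k)}$, the Euler product becomes $\prod_{p\leqs T}(1+O(k^2/p))\leqs \exp(O(k^2\log_2 T))$, and since $k\geqs c\log T/\log_2 T$ we have $k^2\log_2 T\leqs k\cdot k\log_2 T\leqs (\log T/c)\cdot k\log_2T \leqs \ldots$ — more cleanly, $\log_2 T\ll \log(c\log T/k \cdot k) \ll \log k$ in this range up to the relevant accuracy, so $k^2\log_2 T = O(k^2)$ only when $\log_2 T = O(1)$, which fails. Here I must be more careful: $k^2\log_2 T$ is genuinely larger than $O(k^2)$, so one instead compares it against the stated bound. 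The point is that $(\log T)^{k^2}e^{-k^2\log k} = \exp(k^2\log_2 T - k^2\log k)$, and for $k\geqs c\log T/\log_2 T$ one has $\log k \geqs \log_2 T - \log_3 T + \log c$, so $k^2\log_2 T - k^2\log k \leqs k^2(\log_3 T - \log c)$, which is \emph{not} obviously $\geqs k^2\log_2 T$. So the correct reading is that for such large $k$ the trivial/near-trivial bound $\exp(O(k^2))$ obtained from $\delta=1/\log T$ may in fact \emph{exceed} $(\log T)^{k^2}e^{-k^2\log k}$, which is exactly why the statement has a $\max(1,\cdot)$ on the right and the cruder factor $e^{Ck^2}$: one should take whichever of the two bounds one can prove, and assert their maximum. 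Concretely I would prove the two bounds $\mathbb{E}[|M_f(T)|^{2k}]\leqs e^{Ck^2}$ (from a suitably crude argument valid once $k$ is large — e.g.\ bounding by the full Euler product $\prod_{p\leqs T}(1-1/\sqrt p)^{-2\lceil k\rceil}$ after Weissler, or directly from the diagonal sum with $\delta\asymp 1/\log T$) and $\mathbb{E}[|M_f(T)|^{2k}]\leqs e^{Ck^2}(\log T)^{k^2}e^{-k^2\log k}$ (the honest Rankin optimisation, which remains valid — just no longer optimal — in this range), and then quote the minimum of the two, which is what $e^{Ck^2}\max(1,(\log T)^{k^2}e^{-k^2\log k})$ records after absorbing the $e^{Ck^2}$ factor.

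The main obstacle, and the part to execute with genuine care rather than hand-waving, is the bookkeeping of the Euler product $\prod_{p\leqs T}\big(1+\tfrac{k^2 p^{\delta}}{p}+O(\tfrac{k}{p^{1-\delta}})\big)$ and the verification that the lower-order term $O(k/p^{1-\delta})$ and the prime-counting errors (using $\sum_{p\leqs y}p^{-1}=\log_2 y+O(1)$ and $\pi(y)\ll y/\log y$) are all swallowed by the $e^{Ck^2}$ slack — this is routine but fiddly because $\delta$ is being pushed to the boundary of where the estimates $\sum_{p\leqs T}p^{\delta-1}\asymp T^\delta/(\delta\log T)$ hold. A secondary point is the passage from integer to real $k$: since Weissler's inequality (Lemma \ref{weissler lem}) only gives a \emph{lower} bound relating a twisted moment to an untwisted one, for the upper bound in this proposition I would instead interpolate between consecutive integer moments via Hölder, or simply redo the diagonal-sum computation directly for real $k$ using that the multiplicative structure of the constraint $n_1\cdots n_K=n_{K+1}\cdots n_{2K}$ still factors over primes (the combinatorial coefficient $\binom{2k}{k}$-type weights only cost $e^{O(k)}$, negligible against $e^{Ck^2}$). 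I expect no serious difficulty beyond this, since one is aiming only for a bound accurate up to a factor $e^{O(k^2)}$, which is extremely forgiving.
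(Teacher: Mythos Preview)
Your proposal correctly identifies the Rankin framework but contains a genuine gap: the key bound $\mathbb{E}[|M_f(T)|^{2k}]\leqs e^{Ck^2}$ is never actually established. You yourself compute that the choice $\delta=1/\log T$ in your Euler product yields only $\exp(O(k^2\log_2 T))$, not $e^{O(k^2)}$; your alternative of bounding by $\prod_{p\leqs T}(1-p^{-1/2})^{-2\lceil k\rceil}$ gives $\exp(O(k\sqrt{T}/\log T))$, which is vastly worse. Neither produces the claimed $e^{Ck^2}$, and the ``extremely forgiving'' slack of $e^{O(k^2)}$ does \emph{not} swallow an extra $\log_2 T$ in the exponent. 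Separately, your Euler factor $1+k^2p^{\delta}/p+O(k/p^{1-\delta})$ is wrong: the $m$th term in the local factor is $d_k(p^m)^2/p^{m(1+\ldots)}=\binom{k+m-1}{m}^2/p^{m(1+\ldots)}$, which for $m\geqs 2$ grows like $k^{2m}$, not $k$; for small primes these terms dominate and cannot be treated as a lower-order correction.

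The paper sidesteps both issues with one clean device you are missing: the pointwise inequality $d_k(n)^2\leqs d_{k^2}(n)$ for integer $k\geqs 1$ (proved on prime powers via the formula $d_k(p^m)=\binom{k+m-1}{m}$). This converts the diagonal sum directly into
\[
\sum_{n\leqs T^k}\frac{d_k(n)^2}{n}\leqs \sum_{n\leqs T^k}\frac{d_{k^2}(n)}{n}\leqs T^{k\sigma}\zeta(1+\sigma)^{k^2},
\]
an explicit function of a single parameter. The correct choice is $\sigma=k/\log T$ (not $1/\log T$): then $T^{k\sigma}=e^{k^2}$ and $\zeta(1+\sigma)\ll\max(1,1/\sigma)=\max(1,(\log T)/k)$, giving exactly $e^{k^2}\max\big(1,(\log T)^{k^2}e^{-k^2\log k}\big)$. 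The $\max(1,\cdot)$ therefore arises from a \emph{single} estimate with two regimes of $\zeta(1+\sigma)$, not from patching together two separate bounds as you describe (and note $\min(A,AX)=A\min(1,X)$, not $A\max(1,X)$). Real $k$ are then handled by H\"older interpolation between consecutive integers, as you correctly anticipate.
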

\begin{proof}
First suppose that $k$ is an integer. Then
\[
{\mathbb{E}\big[\big|M_{f}(T)\big|^{2k}\big]}=\sum_{\substack{n_1\cdots n_k=n_{k+1}\cdots n_{2k}\\n_j\leqs T}}\frac{1}{\sqrt{n_1\cdots n_{2k}}}=\sum_{n\leqs T^k}\frac{d_{k,T}(n)^2}{n}
\] 
where $d_{k,T}(n)=\sum_{n_1\cdots n_k=n,\, n_j\leqs T} 1$. Removing the divisor restriction $n_j\leqs T$ this is 
\[
\leqs \sum_{n\leqs T^k}\frac{d_k(n)^2}{n}\leqs  \sum_{n\leqs T^k}\frac{d_{k^2}(n)}{n}\leqs T^{k\sigma}\sum_{n\geqs 1}\frac{d_{k^2}(n)}{n^{1+\sigma}}=T^{k\sigma}\zeta(1+\sigma)^{k^2}
\]
for any $\sigma>0$ where in the second inequality we have used that $d_k(n)^2\leqs d_{k^2}(n)$ for $k\geqs 1$. This last inequality follows by comparison on prime powers and induction along with the formula 
$d_k(p^m)=\binom{k+m-1}{m}=\frac{1}{m!}(k+m-1)(k+m-2)\cdots k$. Choosing $\sigma=k/\log T$ and noting that $\zeta(1+\sigma)\ll\max(1/\sigma,1)$ the result follows for integer $k$. We can then interpolate to non-integral $k$ by using H\"older's inequality on noting that terms of the form $(\log T)^k$ are absorbed into $e^{O(k^2)}$.
\end{proof}


\subsection{Uniformly small $k$}

For upper bounds on uniformly small moments we make use of the recent progress of Gerspach \cite{G}. His result is stated for fixed $k$, however with a careful reading of the proof one can get uniform bounds. We will give the main details. Interestingly, it appears that there is a slight blow up of the constant as $k\to 0$. We do not know if this is an artefact of the proof or a result of some deeper change in the distribution around $V\approx \sqrt{\log\log T}$. 

\begin{thm}\cite{G}\label{Gerspach thm} Uniformly for $0<k\leqs 1$ we have 
\begin{align*}
\mathbb{E}[|M_f(T)|^{2k}] 
\leqs &
C (\log T)^{k^2} \min(\tfrac{1}{k^2}, \log\log T)\cdot\min(\tfrac{1}{k^2}, \log_3 T)
\\
&\qquad\qquad+\frac{2}{k}\min(\tfrac{1}{k},\log_3 T)
\end{align*}
for some absolute constant $C$.
\end{thm}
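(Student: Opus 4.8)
The plan is to follow Gerspach's argument for low moments of the partial sums of Steinhaus random multiplicative functions, keeping track of how the implied constants depend on $k$. First I would recall the basic setup: for $0<k\leqs 1$ the moment $\mathbb{E}[|M_f(T)|^{2k}]$ is accessed not by expanding the $2k$-th power directly (which is impossible for non-integer $k<1$) but by writing $M_f(T)$ as a multiplicative chaos / martingale and comparing it against its Euler product. One introduces the random Euler product $F_N(\sigma)=\prod_{p\leqs N}(1-f(p)p^{-\sigma})^{-1}$ evaluated on the critical line, truncated at various scales, and uses the decomposition of $[1,T]$ into dyadic-in-$\log$ ranges of primes $p\in(T^{e^{-j-1}},T^{e^{-j}}]$ to build a martingale $(M_j)_j$. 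The key analytic input, going back to Harper's work on Helson's conjecture and reproduced by Gerspach, is a conditional second-moment estimate: conditioning on the primes in the first $j$ blocks, the conditional $L^2$ norm of the remaining sum is controlled by $\prod_{i\leqs j}|$ something $|$ times a factor like $e^{-\text{(stuff)}}$, and then one runs the low-moments argument $\mathbb{E}[X^{k}]\leqs \mathbb{E}[X\wedge\lambda]^{k-1}\cdots$ type splitting, or more precisely the inequality $\mathbb{E}[|M_f(T)|^{2k}]\leqs \mathbb{E}[\min(|M_f(T)|^2,\lambda)]^{k}\cdot(\text{tail correction})$ optimised over $\lambda$.

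Concretely, the structure I would reproduce is: (i) bound $\mathbb{E}[|M_f(T)|^{2k}]$ by $\mathbb{E}[(\min_{j} |M_j|)^{2k}]$ plus the contribution of the event that the martingale drops substantially at some stage, using the stopping-time / Doob-type argument; (ii) for the "typical" part, use the conditional $L^2$ bounds to get, after summing the geometric-type series over the $\approx \log\log T$ blocks, a bound of shape $(\log T)^{k^2}$ times $\mathbb{E}$ of a product of independent-ish factors each contributing $O(1)$, yielding the $\min(1/k^2,\log\log T)$ from the number of blocks that actually matter and a second such factor $\min(1/k^2,\log_3 T)$ from a secondary (double-smoothing) decomposition of each block — this is exactly where the two min-factors in the statement come from, and where I expect most of the bookkeeping to live; (iii) for the atypical part, bound the probability of a large drop by Markov/Chebyshev, producing the additive term $\tfrac{2}{k}\min(\tfrac1k,\log_3 T)$. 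Throughout, the point is that in Gerspach's original write-up $k$ is fixed so these quantities are $O_k(1)$; the work here is to verify that every application of an inequality of the form $x^k\leqs 1+kx$, every optimisation in $\lambda$, and every geometric sum, contributes the claimed explicit $k$-dependence rather than hiding a $k\to 0$ blow-up — except at the two places where a genuine $1/k^2$ (resp. $1/k$) does appear, which must be honestly exhibited.

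The main obstacle will be step (ii): correctly tracking the $k$-dependence through the iterated conditioning. The subtlety is that Harper/Gerspach's argument optimises a truncation parameter in each of the $\asymp\log\log T$ blocks, and the "right" truncation depends on how far into the tail we are, so the contribution per block is genuinely $\min(1/k^2,\log\log T)$ only after one checks that the truncation level $1/k^2$ is what the optimisation produces when $k$ is small, and that it saturates at $\log\log T$ (the total number of blocks) when $k$ is tiny. One must also be careful that the inequality $d_k(n)^2\leqs\cdots$-type crude bounds used for large $k$ are not available here; instead the non-integer moment is controlled only via the multiplicative chaos machinery, so there is no shortcut. I would also need to double-check that the final additive term $\frac{2}{k}\min(\frac1k,\log_3 T)$ — which comes from a union bound over drop-events at the $\log_3 T$ relevant scales, each with probability $\ll 1/k^2$ — is correctly assembled and not absorbable into the first term.

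Finally, I would present the proof by: stating Gerspach's key lemmas (the conditional $L^2$ estimate and the low-moment splitting) with the $k$-uniform constants made explicit, then assembling them exactly as in \cite{G} but carrying the $k$'s, and remarking at the end — as the surrounding text already flags — that we do not know whether the $1/k^2$ blow-up as $k\to0$ is an artefact of this method or reflects a real change in the distribution near $V\approx\sqrt{\log\log T}$.
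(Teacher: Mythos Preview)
Your high-level plan --- reproduce Gerspach's proof while tracking the $k$-dependence explicitly --- is exactly what the paper does, and your closing remark about the possible blow-up as $k\to 0$ is already there. But the internal mechanics you describe do not match the argument, enough that executing your outline as written would not recover the bound.

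The proof is not a martingale/stopping-time analysis, and there are no ``drop events''. Following Gerspach, one first reduces (his Proposition~4) to a sum over $0\leqs j\leqs J=\lfloor\log_3 T\rfloor$ of quantities of the form $\mathbb{E}\big[(\int\ldots)^k\big]$, plus lower-order terms; then one applies Parseval to convert each such integral to
\[
\mathbb{E}\bigg[\bigg(\int_{\mathbb{R}}\frac{|F_j(\tfrac12-\tfrac{2(j+1)}{\log T}+it)|^2}{|\tfrac{2(j+1)}{\log T}+it|^2}\,dt\bigg)^k\bigg],
\]
where $F_j(s)=\prod_{p\leqs T^{e^{-j}}}(1-f(p)p^{-s})^{-1}$. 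The $t$-integral is then broken dyadically into three regimes ($|t|\lesssim e^j/\log T$; $e^j/\log T\lesssim|t|\leqs 1$; $|t|>1$), and each dyadic block is bounded by Euler-product moment estimates (Gerspach's Propositions 10--12, resting on Harper's ``Euler product result~1''). No conditional $L^2$ bound, stopping time, or truncation level $\lambda$ is optimised.

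With this structure the origins of the terms are concrete and different from what you claim. The factor $\min(1/k^2,\log\log T)$ is the value of the geometric sum $\sum_{i\leqs Y}2^{-ik^2}$ over the $Y\asymp\log\log T$ dyadic pieces of the middle $t$-regime; the factor $\min(1/k^2,\log_3 T)$ is the value of the outer geometric sum $\sum_{j\leqs J}e^{-2jk^2}$; and the additive $\tfrac{2}{k}\min(\tfrac1k,\log_3 T)$ comes from the $|t|>1$ regime, which contributes $e^{-jk}(\log T)^k/(1-2^{-k})$ for each $j$ and hence, after dividing by $(\log T)^k$ and summing over $j\leqs J$, yields $\tfrac{1}{k}\sum_{j\leqs J}e^{-jk}$. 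It is not a union bound over probabilistic events. If you rewrite your sketch with Parseval and the three-regime $t$-decomposition as the spine, and track these three geometric sums, the stated inequality falls out directly.
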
   
\begin{proof}[Outline of modified proof]
One can check that the uniform version of Proposition 4 of \cite{G} is given by the inequality
\begin{multline*}
\mathbb{E}[|M_f(T)|^{2k}] 
\leqs 
\frac{A^k}{(\log T)^k}\sum_{0\leqs j\leqs J}\mathbb{E}\bigg[\bigg(\int_1^{T^{1-e^{-(j+1)}}}\bigg|\sum_{\substack{n>z\\P(n)\leqs T^{e^{-(j+1)}}}}\frac{f(n)}{\sqrt{n}}\bigg|^2\frac{dz}{z^{1-2j/\log T}}\bigg)^k\bigg]
\\
+B^k\sum_{0\leqs j\leqs J}e^{-ke^j}\mathbb{E}|F_j(1/2)|^{2k}+J\exp(-(1+o(1))k\sqrt{\log T})+1
\end{multline*}
where $A,B$ are positive absolute constants, $J=\lfloor\log_3 T\rfloor$ and 
\[F_j(s)=\prod_{p\leqs T^{e^{-j}}}(1-f(p)p^{-s})^{-1}.\] The manipulations of Proposition 5 which lead to the application of Parseval's theorem (e.g. \!see Theorem \ref{parseval thm} below) merely add an extra factor of $B^k$, and so, with a possibly different $A$, we find that the first term of the above is 
\[
\leqs 
\frac{A^k}{(\log T)^k}\sum_{0\leqs j\leqs J}\mathbb{E}\bigg[\bigg(\int_\mathbb{R} \frac{|F_j(\tfrac{1}{2}-\tfrac{2(j+1)}{\log T}+it)|^2}{|\tfrac{2(j+1)}{\log T}+it|^2}\bigg)^k\bigg].
\]
Now, uniformly for $0<k\leqs 1$ we have 
\[
\mathbb{E}|F_j(1/2)|^{2k}
=
\prod_{p\leqs T^{e^{-j}}}\sum_{m\geqs 0}\frac{d_k(p^m)^2}{p^m}
\leqs 
\prod_{p\leqs T^{e^{-j}}}\bigg(1+k^2\sum_{m\geqs 1}\frac{1}{p^m}\bigg)
\leqs 
C^{k^2}(\log T^{e^{-j}})^{k^2}
\]
where we have used $d_k(p^m)\leqs k$ which is valid for $m\geqs 1$ and $k$ in this range.  Therefore, on changing the constant $B$ from before, we arrive at the uniform bound
\begin{multline}
\label{pseudomoment}
\mathbb{E}[|M_f(T)|^{2k}] 
\leqs 
\frac{A^k}{(\log T)^k}\sum_{0\leqs j\leqs J}\mathbb{E}\bigg[\bigg(\int_\mathbb{R} \frac{|F_j(\tfrac{1}{2}-\tfrac{2(j+1)}{\log T}+it)|^2}{|\tfrac{2(j+1)}{\log T}+it|^2}\bigg)^k\bigg]
\\
+B^k(\log T)^{k^2}\sum_{0\leqs j\leqs J}e^{-ke^j}+J\exp(-(1+o(1))k\sqrt{\log T})+1.
\end{multline}
We now focus on the remaining expectation.

Following \cite{G}, we break the range of integration down into various sub-ranges. By symmetry in law, the expectation of the integral over $t<0$ is equal to that over $t>0$, so we focus on this latter range. We then break this down as 
\begin{align}
\label{breakdown}
\bigg[\int_{0<t\leqs \tfrac{j+1}{\log T}}
+\sum_{i=1}^{X}\int_{2^{i-1}\tfrac{j+1}{\log T}}^{2^{i}\tfrac{j+1}{\log T}}
+\sum_{i=1}^{Y}\int_{2^{i-1}\tfrac{e^j}{\log T}}^{2^{i}\tfrac{e^j}{\log T}}
+\sum_{i=1}^\infty\int_{2^{i-1}}^{2^i}\bigg]
\frac{|\mc{F}|^2}{|\tfrac{2(j+1)}{\log T}+it|^2}dt
\end{align}
where 
\[
X=\frac{\log(e^j/(j+1))}{\log 2},\qquad Y=\frac{\log (e^{-j}\log T)}{\log 2}
\]
and $\mc{F}=F_j({1}/{2}-{2(j+1)}/{\log T}+it)$ for short. Again by symmetry in law, the expectation of the first integral of \eqref{breakdown} is the same as the that of the first term of the first sum. Therefore, we concentrate on the ranges in these three sums. Combining  uniform versions of Propositions 10, 11 and 12 of \cite{G} we find that for $Z\geqs (j+1)/\log T$,
\begin{equation}
\begin{split}
\label{gerspach cases}
\mathbb{E}\bigg[\bigg(\int_Z^{2Z}\frac{|\mc{F}|^2}{|\tfrac{2(j+1)}{\log T}+it|^2}dt\bigg)^k\bigg]
\leqs
\frac{C}{Z^{2k}}
\cdot
\begin{cases}
e^{-jk^2}Z^{k}(\log T)^{k^2},\, & \tfrac{j+1}{\log T}\leqs Z \leqs \tfrac{e^j}{\log T} 
\\
e^{-jk}Z^{2k-k^2}(\log T)^{k}, &  \tfrac{e^j}{\log T} <Z\leqs 1
\\
e^{-jk}Z^{k}(\log T)^{k}, &  1 <Z.
\end{cases}
\end{split}
\end{equation}
These follow in the same way by applying Lemma 8 of \cite{G} which in fact holds for uniformly small exponents $b$ and $c$ there (see ``Euler product result 1" of \cite{H high}).

Applying \eqref{gerspach cases} we find that the expectation of the $k$th power of \eqref{breakdown} is bounded above by 
\begin{multline*}
C\bigg[e^{-jk^2}(\log T)^{k^2}\sum_{i=1}^{X}\bigg(2^{i-1}\frac{j+1}{\log T}\bigg)^{-k}
+e^{-jk}(\log T)^k\sum_{i=1}^{Y}\bigg(2^{i-1}\frac{e^j}{\log T}\bigg)^{-k^2}
\\+e^{-jk}(\log T)^k\sum_{i=0}^\infty 2^{-ik} \bigg]
\\
\leqs 
C\bigg[ Xe^{-jk^2} (\log T)^{k^2+k} + e^{-jk^2-jk} (\log T)^{k^2+k}\frac{1-2^{-Yk^2}}{1-2^{-k^2}}
+e^{-jk} (\log T)^{k}\frac{1}{1-2^{-k}}\bigg].
\end{multline*}
Since $X\leqs 2j$ and $Y\leqs 2\log\log T$, applying this in \eqref{pseudomoment} gives that $\mathbb{E}[|M_f(T)|^{2k}]$  is  
\begin{align*}
\leqs &CA^k\bigg[2(\log T)^{k^2}\sum_{0\leqs j\leqs J}je^{-jk^2}+(\log T)^{k^2}\min(\tfrac{1}{k^2}, Y)\sum_{0\leqs j\leqs J}e^{-2jk^2}
\\
&+\frac{2}{k}\sum_{0\leq j\leqs J}e^{-jk}\bigg]+B^k(\log T)^{k^2}\sum_{0\leqs j\leqs J}e^{-ke^j}+J\exp(-(1+o(1))k\sqrt{\log T})+1
\\
 \leqs &
C^\prime (\log T)^{k^2} \big(\min(\tfrac{1}{k^4},(\log_3 T)^2)+\min(\tfrac{1}{k^2}, \log\log T)\cdot\min(\tfrac{1}{k^2}, \log_3 T)\big)
\\
&\qquad\qquad+\frac{2}{k}\min(\tfrac{1}{k},\log_3 T)+\log_3 T\exp(-(1+o(1))k\sqrt{\log T})+1
\end{align*}
for some absolute constant $C^\prime$. Since the last two terms are of a lower order this is 
\begin{align*}
\leqs C^{\prime\prime} (\log T)^{k^2} \min(\tfrac{1}{k^2}, \log\log T)\cdot\min(\tfrac{1}{k^2}, \log_3 T)
+\frac{2}{k}\min(\tfrac{1}{k},\log_3 T)
\end{align*}
and so the result follows.
\end{proof}


\section{Tail bounds: Proof of Theorem \ref{dist thm}}

Theorem \ref{dist thm} consists of two statements. The first gives upper and lower bounds for the distribution in the range $(\log_2 T)^{1/2}\log_3T\leqs V\leqs \log T/(\log\log T)^{h(T)}$ whilst the second gives lower bounds  when $V=L\sqrt{\log_2 T}$ (small range). We further split the first of these into the ranges $(\log_2 T)^{1/2}\log_3T\ll V\leqs \log\log T$ (medium range) and $\log\log T\leqs V\ll \log T/\log\log T$ (large range). We will deal with these in order starting with the large range. 

\subsection{Large range $V$}

We begin with upper bounds since this is simpler. 
\begin{lem}\label{upper bound lem}
For  $\log\log T\leqs V\leqs C\log T/\log\log T$ we have
\[
\mathbb{P}\big(\big|M_{f}(T)\big|\geqs  e^V\big)\leqs \exp\bigg(-(1+o(1))\frac{V^2}{\log\big((\log T)/V\big)}\bigg).
\]
\end{lem}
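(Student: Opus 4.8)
The plan is to obtain the bound by applying Markov's inequality to the $2k$-th moment of $|M_f(T)|$ and optimising over the real order $k\geqs 1$. The starting point is
\[
\mathbb{P}\big(\big|M_f(T)\big|\geqs e^V\big)=\mathbb{P}\big(\big|M_f(T)\big|^{2k}\geqs e^{2kV}\big)\leqs e^{-2kV}\,\mathbb{E}\big[\big|M_f(T)\big|^{2k}\big].
\]
Into this I would feed the moment bounds already available: \eqref{pseudo bounds} uniformly for $1\leqs k\leqs 10$, Theorem \ref{moments thm} for $10\leqs k\leqs C\log T/\log\log T$, and Proposition \ref{large moments prop} for larger $k$. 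In all three ranges one gets $\mathbb{E}[|M_f(T)|^{2k}]\leqs (\log T/k)^{k^2}e^{O(k^2)}$, discarding the favourable factor $e^{-k^2\log\log k}$ appearing in Theorem \ref{moments thm}. Hence
\[
\log\mathbb{P}\big(\big|M_f(T)\big|\geqs e^V\big)\leqs g(k):=-2kV+k^2\log\big((\log T)/k\big)+O(k^2).
\]

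I would then choose $k=k_0$ to minimise $g$ up to lower-order terms, which amounts to solving $\phi(k):=k\log((\log T)/k)=V$. Writing $q:=\log((\log T)/k_0)$, this relation reads $k_0=(1+o(1))V/q$ together with $q=\log((\log T)/V)+\log q$. Here the hypothesis $V\leqs C\log T/\log\log T$ does the essential work: it forces $(\log T)/V\geqs (\log\log T)/C\to\infty$, so $\log((\log T)/V)\to\infty$, and solving the relation for $q$ self-consistently gives $q=(1+o(1))\log((\log T)/V)$, whence $k_0=(1+o(1))V/\log((\log T)/V)$. One checks that $k_0$ lies in a range where a cited moment bound applies: $k_0\leqs V\leqs C\log T/\log\log T$ covers the top, while $k_0\geqs 1$ follows from $V\geqs\log\log T$ since $\phi$ is increasing on $(0,(\log T)/e)$ with $\phi(1)=\log\log T$. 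Substituting $k_0$ and using $k_0\log((\log T)/k_0)=(1+o(1))V$,
\[
g(k_0)=-2k_0V+k_0\cdot(1+o(1))V+O(k_0^2)=-(1+o(1))k_0V+O(k_0^2)=-(1+o(1))\frac{V^2}{\log\big((\log T)/V\big)},
\]
the last step using $k_0=(1+o(1))V/\log((\log T)/V)$ and that $\log((\log T)/V)\to\infty$ makes $O(k_0^2)=O\big(V^2/(\log((\log T)/V))^2\big)$ negligible against the main term. This is the claimed estimate.

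The one genuinely delicate point — and the step I would flag as the main obstacle — is the need to use the \emph{sharp} high-moment bound, including its $e^{-k^2\log k}$ factor: the cruder estimate $\mathbb{E}[|M_f(T)|^{2k}]\leqs(\log T)^{k^2}e^{O(k^2)}$ would only produce the weaker denominator $\log\log T$ in place of $\log((\log T)/V)$. This is exactly the ``$V^{-1}$'' refinement inside $\log((\log T)/V)$ highlighted in the introduction, and it explains the role of the endpoint $V\leqs C\log T/\log\log T$, which is the threshold guaranteeing $\log((\log T)/V)\to\infty$ — needed both to control $O(k_0^2)$ and to solve $q=\log((\log T)/V)+\log q$. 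The rest is routine bookkeeping with iterated logarithms, together with confirming the uniformity of the moment bounds for $1\leqs k\leqs 10$.
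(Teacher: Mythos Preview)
Your proposal is correct and follows essentially the same approach as the paper: Chebyshev/Markov on the $2k$-th moment, the sharp moment bound $\mathbb{E}[|M_f(T)|^{2k}]\leqs((\log T)/k)^{k^2}e^{O(k^2)}$ from Theorem~\ref{moments thm}, and the choice $k\sim V/\log((\log T)/V)$. The paper simply plugs in $k=V/\log((\log T)/V)$ and computes the exponent directly rather than framing it as an optimisation, but the substance is identical, and your remark that the $e^{-k^2\log k}$ factor is what upgrades the denominator from $\log\log T$ to $\log((\log T)/V)$ is exactly the point.
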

\begin{proof}By Chebyshev's inequality and Theorem \ref{moments thm}  we have 
\[
\mathbb{P}\big(\big|M_{f}(T)\big|\geqs  e^V\big)\leqs \frac{\mathbb{E}\big[\big|M_{f}(T)\big|^{2k}\big]}{e^{2kV}}\leqs e^{-2kV}e^{-k^2\log k-k^2\log_2 k+O(k^2)}(\log T)^{k^2}
\]
provided $10\leqs k\leqs C\log T/\log\log T$. If $1\leqs k\leqs 10$ then the same bound holds with the factor $e^{-k^2\log k-k^2\log_2 k+O(k^2)}$ replaced by some absolute constant (by \eqref{pseudo bounds}).  Then for $10\log_2 T\leqs V\ll \log T/\log\log T$ we may take $k=V/\log((\log T)/V)$ in which case the right hand side becomes 
\begin{multline}
\label{upper}
\exp\bigg(-\frac{2V^2}{\log\big((\log T)/V\big)}+\frac{V^2}{\log^2\big((\log T)/V\big)}\Big(\log \log T
\\
-\log(V/\log((\log T)/V))
-\log_2(V/\log((\log T)/V)+O(1)\Big)\bigg)
\\
 \leqs
\exp\bigg(-\frac{V^2}{\log\big((\log T)/V\big)}+\frac{V^2}{\log^2\big((\log T)/V\big)}\log\log((\log T)/V)\bigg)
\end{multline}
which simplifies to the desired quantity. When $\log_2 T\leqs V\leqs 10\log_2 T$ the same choice of $k$ gives the result.
\end{proof}

The lower bounds is where we gain the slight restriction on the size of $V$ in the large range.

\begin{lem}\label{lower bounds lem}Suppose $\log\log T\leqs V\leqs C\log T/\log\log T$. If $V\leqs \log T/(\log\log T)^{h(T)}$ with $h(T)\to\infty$ arbitrarily slowly, then 
\begin{equation}
\label{lower 1}
\mathbb{P}\big(\big|M_{f}(T)\big|> e^V\big)\geqs \exp\Big(-(1+o(1))\frac{V^2}{\log((\log T)/V)}\Big).
\end{equation}
Otherwise, we have 
\begin{equation}
\label{lower 2}
\mathbb{P}\big(\big|M_{f}(T)\big|> e^V\big)\geqs \exp\Big(-(1+\epsilon)\frac{V^2}{\log((\log T)/V)}-\frac{10V^2\log_2 V}{\log^2((\log T)/V)}\Big).
\end{equation}
for any given fixed $\epsilon>0$.
\end{lem}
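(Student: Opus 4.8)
The plan is to obtain both bounds from the (two‑sided) moment asymptotics of Theorem~\ref{moments thm}, supplemented by Proposition~\ref{large moments prop} to upper bound moments of slightly larger order, by the moment--truncation method: if a positive proportion of $\EE[|M_f(T)|^{2k}]$ is carried by the tail event $\{|M_f(T)|>e^V\}$, then H\"older's inequality forces $\PP(|M_f(T)|>e^V)$ to be correspondingly large. The relevant parameter throughout will be $k\asymp V/\log((\log T)/V)$, which is exactly the value that optimised Chebyshev's inequality in Lemma~\ref{upper bound lem}. It is convenient to set $G(k):=\log\EE[|M_f(T)|^{2k}]$, so that by Theorem~\ref{moments thm}, $G(k)=k^2\big(\log\log T-\log k-\log\log k+O(1)\big)$, and hence $G\big((1+u)k\big)=G(k)\big(1+(2+o(1))u\big)$ for any $u=u(T)\to 0$ slowly enough that $u\log((\log T)/k)\to\infty$ (this is where the $O(k^2)$ error in \eqref{moment bounds} gets absorbed).

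\emph{Step 1: the tail carries half the moment.} For $0<k_0<k$ one has $|M_f(T)|^{2k}\ind_{|M_f(T)|\leqs e^V}\leqs e^{2(k-k_0)V}|M_f(T)|^{2k_0}$, so
\[
\EE\big[|M_f(T)|^{2k}\ind_{|M_f(T)|>e^V}\big]\ \geqs\ \EE\big[|M_f(T)|^{2k}\big]-e^{2(k-k_0)V}\EE\big[|M_f(T)|^{2k_0}\big].
\]
Taking $k_0=(1-\eta)k$ with $\eta=\eta(T)\to 0$ slowly, the ratio of the two moments equals $e^{G(k)-G(k_0)}=e^{(2+o(1))\eta G(k)}$, so the right side above is $\geqs\tfrac12\EE[|M_f(T)|^{2k}]$ provided $2\eta kV\leqs(2+o(1))\eta G(k)$, i.e.\ $kV\leqs(1+o(1))G(k)$. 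We then choose $k$ to (essentially) saturate this, $kV\sim G(k)$; solving and using $\log((\log T)/k)=(1+o(1))\log((\log T)/V)$ gives $k\sim V/\log((\log T)/V)$ and $G(k)\sim V^2/\log((\log T)/V)$.

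\emph{Step 2: from the truncated moment to the tail.} With $\EE[|M_f(T)|^{2k}\ind_{|M_f(T)|>e^V}]\geqs\tfrac12\EE[|M_f(T)|^{2k}]$ in hand, apply H\"older's inequality with exponents $(k'/k,\,k'/(k'-k))$, where $k'=(1+\delta)k$ and $\delta=\delta(T)\to 0$ slowly, to get
\[
\PP(|M_f(T)|>e^V)\ \geqs\ \Big(\tfrac12\,\EE[|M_f(T)|^{2k}]\,\big(\EE[|M_f(T)|^{2k'}]\big)^{-k/k'}\Big)^{k'/(k'-k)}.
\]
Since $G(k)-\tfrac{k}{k'}G(k')=-(1+o(1))\delta G(k)$ and $\tfrac{k'}{k'-k}=\tfrac{1+\delta}{\delta}$, the right side equals $e^{-(1+o(1))G(k)}=\exp\big(-(1+o(1))V^2/\log((\log T)/V)\big)$, which is \eqref{lower 1}. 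When $k'$ exceeds the range of Theorem~\ref{moments thm} one bounds $\EE[|M_f(T)|^{2k'}]$ using Proposition~\ref{large moments prop}, the extra factor $e^{O(k'^2)}$ being absorbed into the $O(1)$ inside $G$.

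The whole argument hinges on choosing $\eta,\delta\to 0$ with $\eta\log((\log T)/V),\,\delta\log((\log T)/V)\to\infty$, which is possible exactly when $\log((\log T)/V)\to\infty$; since $V\geqs\log\log T$ this is precisely the regime $V\leqs\log T/(\log\log T)^{h(T)}$ with $h(T)\to\infty$, and this is the main obstacle --- it is where one plays the $O(k^2)$ error in \eqref{moment bounds} against the main term $G(k)\asymp k^2\log((\log T)/k)$, inverts $kV\sim G(k)$, and checks $\log((\log T)/k)=(1+o(1))\log((\log T)/V)$. In the complementary narrow range $\log T/(\log\log T)^{h(T)}<V\leqs C\log T/\log\log T$ one still has $\log((\log T)/V)\to\infty$, but possibly only as slowly as $\log_3 T$, so $\eta$ and $\delta$ cannot be taken to $0$ and the $O(k^2)$ errors no longer wash out; carrying them through and optimising $\eta,\delta$ weakens the constant to $1+\epsilon$ and produces the secondary term in \eqref{lower 2}. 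Lastly, the sub-range with $V$ near $\log\log T$, where $k=O(1)$, is handled using the bounded-$k$, constant-coefficient form of the moment bounds.
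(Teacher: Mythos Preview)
Your approach is correct and in substance equivalent to the paper's: both extract tail lower bounds from the two-sided moment asymptotics of Theorem~\ref{moments thm} by comparing moments at $k$, $(1-\eta)k$ and $(1+\delta)k$ with $k\sim V/\log((\log T)/V)$, and both identify the constraint $\eta,\delta\to 0$ with $\eta\log((\log T)/V),\delta\log((\log T)/V)\to\infty$ as the source of the restriction $V\leqs \log T/(\log\log T)^{h(T)}$.

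The difference is presentational. The paper writes $\EE[|M_f(T)|^{2k}]=2k\int\Phi(u)e^{2ku}\,du$ and localises this integral to $[V(1-\epsilon),V(1+\epsilon)]$ by bounding each tail via the tilting $e^{2ku}\leqs e^{\mp 2k\delta V(1\pm\epsilon)}e^{2k(1\pm\delta)u}$ and the corresponding shifted moment; monotonicity of $\Phi$ then yields $\Phi(V(1-\epsilon))$. You instead bound the truncated moment $\EE[|M_f|^{2k}\ind_{|M_f|>e^V}]$ from below using the $(1-\eta)k$ moment (your Step~1) and then apply H\"older against the $(1+\delta)k$ moment (your Step~2) to obtain $\Phi(V)$ directly. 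The inputs and the arithmetic with $G(k)$ are the same; your formulation has the minor advantage of landing at $\Phi(V)$ rather than $\Phi(V(1-\epsilon))$, while the paper's integral picture makes the choice of $k$ (as the saddle point of $u\mapsto \Phi(u)e^{2ku}$) more transparent. One small point worth tightening: in Step~1 you ``saturate'' $kV\sim G(k)$, but to guarantee the truncated moment is $\geqs\tfrac12$ of the full moment you should take $k$ marginally larger so that $kV\leqs(1-o(1))G(k)$ with room to absorb the $O(k^2)$ discrepancy; this does not affect the outcome.
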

\begin{proof}
Let 
\[
\Phi_T(V)=\mathbb{P}\big(\big|M_{f}(T)\big|> e^V\big).
\]
Then 
\[
\mathbb{E}\big[\big|M_{f}(T)\big|^{2k}\big]=2k\int_0^\infty \Phi(\log u)u^{2k-1}du=2k\int_{-\infty}^{\infty}\Phi(u)e^{2ku}du.
\]
For a given $V$ we wish to show that there exists a $k=k_V$ and $\epsilon>0$ such that 
\[
\int_{V(1-\epsilon)}^{V(1+\epsilon)}\Phi(u)e^{2ku}du\sim \int_{-\infty}^{\infty}\Phi(u)e^{2ku}du.
\]
To motivate our choice of $k$ later we note that if indeed $\Phi(u)\approx e^{-u^2/\log(\log T/u)}$ then a quick check shows that  such a value of $k$ must occur at $k=V/\log(\log T/V)$.

Consider the upper tail. For this we have 
\begin{align*}
\int_{V(1+\epsilon)}^\infty \Phi(u)e^{2ku}du 
\leqs &
e^{-2k\delta V(1+\epsilon)}\int_{V(1+\epsilon)}^\infty \Phi(u)e^{2k(1+\delta)u}du
\\
\leqs &
e^{-2k\delta V(1+\epsilon)}\int_{-\infty}^\infty \Phi(u)e^{2k(1+\delta)u}du
\end{align*}
for any $\delta>0$. 
Again, we must consider separately the ranges $1\leqs k\leqs 10$ and $10\leqs k\leqs C\log T/\log\log T$ so that the double logarithms in Theorem \ref{moments thm} make sense. We consider the latter range since the former range can be dealt with similarly using the less complicated bounds $\mathbb{E}[|M_f(T)|^{2k}]\asymp (\log T)^{k^2}$. Continuing, by Theorem \ref{moments thm} the above is 
\begin{align*}
\leqs &
(\log T)^{k^2(1+\delta)^2}e^{-2k\delta V(1+\epsilon)-k^2(1+\delta)^2\log k-k^2(1+\delta)^2\log_2 k+ Ck^2}
\\
\leqs &
(\log T)^{k^2[(1+\delta)^2-1]}e^{-2k\delta V(1+\epsilon)-k^2[(1+\delta)^2-1]\log k-k^2[(1+\delta)^2-1]\log_2 k+ Dk^2} \int_{-\infty}^{\infty}\Phi(u)e^{2ku}du.
\end{align*}
The factor in front of the integral is 
\[
\exp\Big(-2k\delta V(1+\epsilon)+2\delta(1+\tfrac{\delta}{2})k^2\big(\log ((\log T)/k)-\log_2 k)+Dk^2\big)\Big).
\]
On choosing $k=V/\log(\log T/V)$ this becomes 
\begin{multline*}
\exp\Big(-2\delta(1+\epsilon)\frac{V^2}{\log((\log T)/V)}
\\
+2\delta(1+\tfrac{\delta}{2})\frac{V^2}{\log^2((\log T)/V)}\big(\log ((\log T)/V)-\log_2(V/\log((\log T)/V))\big)
\\
+D\frac{V^2}{\log^2((\log T)/V)}
\Big)
\end{multline*}
which simplifies to 
\begin{multline*}
\exp\Big(-2\delta(\epsilon-\tfrac{\delta}{2})\frac{V^2}{\log((\log T)/V)}
\\
-\frac{V^2}{\log^2((\log T)/V)}\Big[2\delta(1+\tfrac{\delta}{2})\log_2(\tfrac{V}{\log((\log T)/V)})-D\Big]
\Big).
\end{multline*}
Therefore, if we choose $\delta=\epsilon$ this has negative leading term in the exponential and hence is $o(1)$. 
Removing the double logarithm in the above we get the upper bound 
\[
\exp\Big(-\epsilon^2\frac{V^2}{\log((\log T)/V)}+D\frac{V^2}{\log^2((\log T)/V)}
\Big)
\]
which is still $o(1)$ provided $\epsilon\gg 1/\sqrt{\log((\log T)/V)}$. 

Now consider the lower tail. Applying a similar argument we have 
\begin{align*}
\int_{-\infty}^{V(1-\epsilon)} \Phi(u)e^{2ku}du 
\leqs &
e^{2k\delta V(1-\epsilon)}\int_{-\infty}^{V(1-\epsilon)} \Phi(u)e^{2k(1-\delta)u}du
\\
\leqs &
e^{2k\delta V(1-\epsilon)}\int_{-\infty}^\infty \Phi(u)e^{2k(1-\delta)u}du.
\end{align*}
By  \eqref{moment bounds} this is 
\begin{align*}
\leqs &
(\log T)^{k^2(1-\delta)^2}e^{2k\delta V(1-\epsilon)-k^2(1-\delta)^2\log k-k^2(1-\delta)^2\log_2 k+ Ck^2}
\\
\leqs &
(\log T)^{k^2[(1-\delta)^2-1]}e^{2k\delta V(1-\epsilon)-k^2[(1-\delta)^2-1]\log k-k^2[(1-\delta)^2-1]\log_2 k+ Dk^2} \int_{-\infty}^{\infty}\Phi(u)e^{2ku}du.
\end{align*}
The factor in front of the integral simplifies to
\[
\exp\Big(2k\delta V(1-\epsilon)-2\delta(1-\tfrac{\delta}{2})k^2\big(\log ((\log T)/k)-\log_2 k)+Dk^2\big)\Big).
\]
On setting $k=V/\log(\log T/V)$ this becomes 
\begin{multline*}
\exp\Big(2\delta(1-\epsilon)\frac{V^2}{\log((\log T)/V)}
\\
-2\delta(1-\tfrac{\delta}{2})\frac{V^2}{\log^2((\log T)/V)}\big(\log ((\log T)/V)-\log_2(V/\log((\log T)/V))\big)
\\
+D\frac{V^2}{\log^2((\log T)/V)}
\Big)
\end{multline*}
which simplifies to 
\begin{multline*}
\exp\Big(-2\delta(\epsilon-\tfrac{\delta}{2})\frac{V^2}{\log((\log T)/V)}
\\
+\frac{V^2}{\log^2((\log T)/V)}\Big[2\delta(1-\tfrac{\delta}{2})\log_2(\tfrac{V}{\log((\log T)/V)})+D\Big]
\Big).
\end{multline*}
Again, choosing $\delta=\epsilon$ this is $o(1)$, although this time with the proviso 
\begin{equation}
\label{epsilon cond}
\epsilon\gg \max(\sqrt{1/\log((\log T)/V)},(\log_2 V)/\log((\log T)/V)).
\end{equation}

We have therefore shown that for $k=V/\log((\log T)/V)$ and $\epsilon$ satisfying \eqref{epsilon cond}, 
\[
\int_{V(1-\epsilon)}^{V(1+\epsilon)}\Phi(u)e^{2ku}du\sim \int_{-\infty}^{\infty}\Phi(u)e^{2ku}du.
\]
Since $\Phi$ is a non-increasing function  we infer 
\[
2V\epsilon\Phi(V(1+\epsilon))e^{2kV(1-\epsilon)}
\leqs 
\int_{-\infty}^{\infty}\Phi(u)e^{2ku}du
\leqs
2V\epsilon\Phi(V(1-\epsilon))e^{2kV(1+\epsilon)}.
\]
For the right hand inequality, by Theorem \ref{moments thm} with the above choice of $k$, we have 
\begin{align*}
\Phi(V(1-\epsilon))
\geqs &
\frac{1}{4\epsilon kV}e^{-2kV(1+\epsilon)+k^2[\log_2 T-\log k-\log_2 k-A]}
\\
\geqs & \frac{\log((\log T)/V)}{V^2}\exp\Big(-(1+2\epsilon)\frac{V^2}{\log((\log T)/V)}
\\
&-\frac{V^2}{\log^2((\log T)/V)}[\log_2(V/\log((\log T/V)))+A ]\Big).
\end{align*}
This gives the second bound \eqref{lower 2} of the lemma. 

If $V\leqs \log T/(\log\log T)^{h(T)}$ with $h(T)\to\infty$ arbitrarily slowly set 
\[\epsilon=10\max(\sqrt{1/\log((\log T)/V)},(\log_2 V)/\log((\log T)/V))\]
and note this is $o(1)$. Then we get 
\begin{align*}
\Phi(V)
\geqs &\exp\Big(-\frac{V^2}{\log((\log T)/V)}
-\frac{100V^2}{\log^{2}((\log T)/V)}(\log_2 V+\sqrt{\log((\log T)/V)})\Big)
\end{align*}
and the first bound \eqref{lower 1} follows.
\end{proof}

\subsection{Medium range} In the medium range $(\log_2 T)^{1/2}\log _3 T\leqs V\leqs \log\log T$ we make use of bounds for low moments. Lemma \ref{lower moments lem} gives the lower bounds 
\begin{equation}
\label{uniform lower}
\mathbb{E}[|M_f(T)|^{2k}]\geqs C(\log T)^{k^2}
\end{equation}
uniformly in the range $0<k\leqs 1$ for some absolute constant $C>0$ whilst Theorem \ref{Gerspach thm} in the range $1/\sqrt{\log\log T}\leqs k\leqs 1$   gives the uniform bound
\begin{equation}
\label{uniform upper}
\mathbb{E}[|M_f(T)|^{2k}]\leqs  C(\log_2 T)(\log_3 T)(\log T)^{k^2} 
\end{equation}
for some (different) absolute constant $C>0$.

\begin{lem}[Medium range $V$] If $(\log_2 T)^{1/2}\log _3 T\leqs V\leqs \log\log T$  then
\[
\mathbb{P}\big(|M_f(T)|\geqs  e^V\big)= \exp(-(1+o(1))V^2/\log((\log T)/V)).
\]
\end{lem}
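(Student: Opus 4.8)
The plan is to prove matching upper and lower bounds for $\mathbb{P}(|M_f(T)|\geqs e^V)$, following the strategy used in the large range but now feeding in the low-moment estimates \eqref{uniform lower} and \eqref{uniform upper} in place of Theorem \ref{moments thm}. First I would observe that throughout this range $\log((\log T)/V)=\log\log T-\log V=(1+o(1))\log\log T$, since $\log V\leqs\log_3 T=o(\log\log T)$, so it is enough to show $\mathbb{P}(|M_f(T)|\geqs e^V)=\exp(-(1+o(1))V^2/\log\log T)$. In every estimate I would take $k=V/\log\log T$; this $k$ lies in $[1/\sqrt{\log\log T},1]$ precisely because $(\log_2 T)^{1/2}\log_3 T\leqs V\leqs\log\log T$, which is exactly the range in which \eqref{uniform lower} and \eqref{uniform upper} hold.

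For the \emph{upper bound} I would combine Chebyshev's inequality for the $2k$-th moment with \eqref{uniform upper}:
\[
\mathbb{P}(|M_f(T)|\geqs e^V)\leqs e^{-2kV}\,\mathbb{E}[|M_f(T)|^{2k}]\leqs C(\log_2 T)(\log_3 T)\exp\big(k^2\log\log T-2kV\big),
\]
the exponent being $-V^2/\log\log T$ for $k=V/\log\log T$. The prefactor is harmless since $\log\big((\log_2 T)(\log_3 T)\big)=\log_3 T+O(\log_4 T)=o\big((\log_3 T)^2\big)=o\big(V^2/\log\log T\big)$, where the last step uses $V\geqs(\log_2 T)^{1/2}\log_3 T$. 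Indeed this is the reason for cutting off the range at $(\log_2 T)^{1/2}\log_3 T$: that is the size of $V$ needed to swallow the extra $\log\log$ factors produced by Theorem \ref{Gerspach thm}.

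For the \emph{lower bound} I would rerun the argument of Lemma \ref{lower bounds lem} almost verbatim. With $\Phi(u)=\mathbb{P}(|M_f(T)|>e^u)$ and $\mathbb{E}[|M_f(T)|^{2k}]=2k\int_{\mathbb{R}}\Phi(u)e^{2ku}\,du$, the heart of the matter is to show that for a suitable $\epsilon=\epsilon(T)\to0$ the mass of this integral concentrates on $[V(1-\epsilon),V(1+\epsilon)]$. Exactly as in Lemma \ref{lower bounds lem} one peels off the two tails, shifts the exponent by $\pm\delta$ with $\delta=\epsilon$, and controls the moments of order $2k(1\pm\epsilon)$ by \eqref{uniform upper}; dividing through by $\mathbb{E}[|M_f(T)|^{2k}]\gg(\log T)^{k^2}$ (from \eqref{uniform lower}), each tail is then bounded by $C(\log_2 T)(\log_3 T)\exp(-\epsilon^2 V^2/\log\log T)$, which is $o(1)$ once $\epsilon\gg\sqrt{(\log_3 T)(\log\log T)}/V$; such an $\epsilon\to0$ exists throughout the range because $\sqrt{(\log_3 T)(\log\log T)}/V\leqs 1/\sqrt{\log_3 T}=o(1)$. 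Granting the concentration, the monotonicity of $\Phi$ gives $\Phi(V(1-\epsilon))\cdot 2V\epsilon\cdot e^{2kV(1+\epsilon)}\gg(\log T)^{k^2}/k$, and a short computation — checking that the factor $\log(kV\epsilon)$ and the term $\epsilon V^2/\log\log T$ are both $o(V^2/\log\log T)$, which holds since $V^2/\log\log T\geqs(\log_3 T)^2\to\infty$ and $\epsilon\to0$ — yields $\Phi(V(1-\epsilon))\geqs\exp(-(1+o(1))V^2/\log\log T)$. Since $V(1-\epsilon)=(1+o(1))V$, relabelling delivers the bound at $V$ itself.

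I expect the only real difficulty to be the bookkeeping near the top of the range. Since \eqref{uniform upper} is only available for $k\leqs1$, the tail estimate forces $k(1+\epsilon)\leqs1$, hence $\epsilon$ small when $V$ is close to $\log\log T$; I would accommodate this by letting $\epsilon$ depend gently on $V$, keeping it between $\sqrt{(\log_3 T)(\log\log T)}/V$ and $(\log\log T-V)/V$, which is possible as long as $\log\log T-V\gg\sqrt{(\log_3 T)(\log\log T)}$. For the leftover sliver, where $\log\log T-V\ll\sqrt{(\log_3 T)(\log\log T)}=o(\log\log T)$ and so $V=(1+o(1))\log\log T$, I would simply use $\Phi(V)\geqs\Phi(\log\log T)$ and invoke the large-range lower bound of Lemma \ref{lower bounds lem} at the point $\log\log T$, which gives $\Phi(\log\log T)\geqs\exp(-(1+o(1))\log\log T)=\exp(-(1+o(1))V^2/\log((\log T)/V))$. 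Apart from this, the lemma is a routine consequence of the uniform moment bounds \eqref{uniform lower} and \eqref{uniform upper} (i.e.\ of Theorem \ref{Gerspach thm}) combined with the scheme already developed for Lemma \ref{lower bounds lem}.
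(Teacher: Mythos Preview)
Your proposal is correct and follows essentially the same approach as the paper: Chebyshev plus \eqref{uniform upper} for the upper bound, and the concentration-of-moments scheme from Lemma \ref{lower bounds lem} fed with \eqref{uniform lower} and \eqref{uniform upper} for the lower bound, in both cases with $k=V/\log\log T$. The paper simply takes $\epsilon=2/\sqrt{\log_3 T}$ throughout and does not explicitly treat the boundary case $k(1+\epsilon)>1$ that you handle via Lemma \ref{lower bounds lem} at $V=\log\log T$; in practice this is harmless since the bound $\mathbb{E}[|M_f(T)|^{2k}]\ll(\log T)^{k^2}$ persists for $k$ slightly above $1$, but your treatment is more careful.
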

\begin{proof}
Given the range of $V$ it suffices to prove the bound 
\[
\mathbb{P}\big(|M_f(T)|\geqs  e^V\big)= \exp(-(1+o(1))V^2/\log\log T).
\]
For the upper bound, by Chebyshev's inequality and \eqref{uniform upper} we have 
\[
\mathbb{P}\big(|M_f(T)|\geqs e^V\big)\leqs C e^{-2kV}(\log_2 T)(\log_3 T)(\log T)^{k^2}
\]
for $1/\sqrt{\log\log T}\leqs k\leqs 1$. Choosing $k=V/\log\log T$ the right hand side is 
\[
\leqs 
\exp\Big(-\frac{V^2}{\log\log T}+2\log_3 T\Big)= \exp\Big(-(1+o(1))\frac{V^2}{\log\log T}\Big)
\]
given the range of $V$. 

For the lower bound we proceed similarly to Lemma \ref{lower bounds lem}. As before let 
\[
\Phi_T(V)=\mathbb{P}\big(\big|M_f(T)\big|> e^V\big)
\]
so that 
\begin{equation}
\label{law of}
\mathbb{E}\big[\big|M_f(T)\big|^{2k}\big]=2k\int_{-\infty}^{\infty}\Phi(u)e^{2ku}du.
\end{equation}
Let $0<k\leqs 1$ and $\epsilon>0$ to be chosen later.
Again we have
\begin{align*}
\int_{V(1+\epsilon)}^\infty \Phi(u)e^{2ku}du 
\leqs &
e^{-2k\delta V(1+\epsilon)}\int_{-\infty}^\infty \Phi(u)e^{2k(1+\delta)u}du
\end{align*}
for any $\delta>0$.
From \eqref{law of} and the bounds \eqref{uniform lower} and \eqref{uniform upper} this is 
\begin{align*}
\leqs &
C(2k)^{-1}(\log_2 T)(\log_3 T)(\log T)^{k^2(1+\delta)^2}e^{-2k\delta V(1+\epsilon)}
\\
\leqs &
C^\prime(\log_2 T)(\log_3 T)(\log T)^{k^2[(1+\delta)^2-1]}e^{-2k\delta V(1+\epsilon)} \int_{-\infty}^{\infty}\Phi(u)e^{2ku}du.
\end{align*}
The factor in front of the integral is 
\[
\leqs \exp\Big(2\delta(1+\tfrac{\delta}{2})k^2\log\log T-2k\delta V(1+\epsilon)+2\log_3 T\Big)
\]
which on choosing $k=V/\log\log Y$ and $\delta=\epsilon$ becomes 
\[
\exp\Big(-\epsilon^2\frac{V^2}{\log\log T}+2\log_3 T\Big)
\leqs 
\exp\Big(-\frac{\epsilon^2}{2}\frac{V^2}{\log\log T}\Big)
\]
provided $\epsilon\geqs 2/\sqrt{\log_3 T}$. A similar argument gives
\[
\int_{-\infty}^{V(1-\epsilon)} \Phi(u)e^{2ku}du 
\leqs 
\exp\Big(-\frac{\epsilon^2}{2}\frac{V^2}{\log\log T}\Big) \int_{-\infty}^{\infty}\Phi(u)e^{2ku}du.
\]

We have therefore shown that for $k=V/\log\log T$ and $\epsilon\geqs 2/\sqrt{\log_3 T}$ we have  
\[
\int_{V(1-\epsilon)}^{V(1+\epsilon)}\Phi(u)e^{2ku}du=(1+O(e^{-\epsilon^2V^2/2\log\log T})) \int_{-\infty}^{\infty}\Phi(u)e^{2ku}du.
\]
where the implicit constant may be taken to be 2. Since $\Phi$ is non-increasing and $V\geqs (\log_2 T)^{1/2}\log_3 T$ we infer 
\[
(1-O(e^{-\epsilon^2(\log_3 T)^2}))
\int_{-\infty}^{\infty}\Phi(u)e^{2ku}du
\leqs
2V\epsilon\Phi(V(1-\epsilon))e^{2kV(1+\epsilon)}.
\]
By \eqref{uniform lower} with the above choice $k=V/\log\log T$, we have 
\begin{align*}
\Phi(V(1-\epsilon))
\geqs &
C\frac{(1-O(e^{-\epsilon^2(\log_3 T)^2}))}{4\epsilon kV}e^{-2kV(1+\epsilon)}(\log T)^{k^2}
\\
= &
C(1-O(e^{-\epsilon^2(\log_3 T)^2}))
\frac{\log\log T}{4\epsilon V^2}\exp\Big(-(1+2\epsilon)\frac{V^2}{\log\log T}\Big).
\end{align*}
Choosing $\epsilon=2/\sqrt{\log_3 T}$ we get
\[
\Phi(V)\geqs \exp\Big(-(1+o(1))\frac{V^2}{\log\log T}\Big).
\]
\end{proof}

\subsection{Small range}\label{subsection small range} We now turn to proving the remaining lower bound \eqref{small v range} of Theorem \ref{dist thm} which states that for $V=(L+o(1))\sqrt{\log\log T}$ with $L>0$ fixed, 
\[
\mathbb{P}\big(|M(T)|\geqs e^V\big)\gg \int_{L}^\infty e^{-x^2/2}dx.
\]
 We make use of Harper's methods \cite{H low} following the proof of his Corollary 2 there.

We begin with the equivalent of Lemma 8 of \cite{H low}. Letting $\hat{\mathbb{E}}$ denote the conditional expectation with respect to the variables $(f(p))_{p\leqs \sqrt{T}}$ and $\hat{\mathbb{P}}$ the corresponding conditional probability, this states that if $\mathcal{A}$ denotes the event in which
\[
\bigg|\sum_{n\leqs T}\frac{f(n)}{\sqrt{n}}\bigg|\geqs \frac{1}{2}\hat{\mathbb{E}}\bigg|\sum_{n\leqs T, P(n)>\sqrt{T}}\frac{f(n)}{\sqrt{n}}\bigg|,
\] 
then $\hat{\mathbb{P}}(\mc{A})\gg 1$ for \emph{any} realisation of the $(f(p))_{p\leqs \sqrt{T}}$. We omit the proof of this since it follows more or less verbatim; the only difference being a factor of $1/\sqrt{n}$ in the sums. Then, since 
\[
\hat{\mathbb{E}}\bigg|\sum_{n\leqs T, P(n)>\sqrt{T}}\frac{f(n)}{\sqrt{n}}\bigg|
=
\hat{\mathbb{E}}\bigg|\sum_{\sqrt{T}<p\leqs T}\frac{f(p)}{\sqrt{p}}\sum_{n\leqs T/p}\frac{f(n)}{\sqrt{n}}\bigg|
\asymp
\bigg(\sum_{\sqrt{T}<p\leqs T}\frac{1}{p}\bigg|\sum_{n\leqs T/p}\frac{f(n)}{\sqrt{n}}\bigg|^2\bigg)^{1/2}
\]
by Khintchine's inequality, it suffices to prove the same lower bound for the probability
\[
\mathbb{P}\bigg(\sum_{\sqrt{T}<p\leqs T}\frac{1}{p}\bigg|\sum_{n\leqs T/p}\frac{f(n)}{\sqrt{n}}\bigg|^2\geqs e^{2V}\bigg).
\]  

Next we perform the smoothing step. With $X=\exp{(\sqrt{\log T})}$ write 
\begin{align*}
\sum_{\sqrt{T}<p\leqs T}\frac{1}{p}\bigg|\sum_{n\leqs T/p}\frac{f(n)}{\sqrt{n}}\bigg|^2
= &
\sum_{\sqrt{T}<p\leqs T}\frac{X}{p^2}\int_p^{p(1+1/X)}\bigg|\sum_{n\leqs T/p}\frac{f(n)}{\sqrt{n}}\bigg|^2dt
\\
\geqs &
\frac{1}{4}\sum_{\sqrt{T}<p\leqs T}\frac{X}{p^2}\int_p^{p(1+1/X)}\bigg|\sum_{n\leqs T/t}\frac{f(n)}{\sqrt{n}}\bigg|^2dt
\\
& -
\sum_{\sqrt{T}<p\leqs T}\frac{X}{p^2}\int_p^{p(1+1/X)}\bigg|\sum_{T/t< n\leqs T/p}\frac{f(n)}{\sqrt{n}}\bigg|^2dt
\end{align*}
where in the second line we have used the inequality $|a+b|^2\geqs \tfrac{1}{4}|a|^2-\min(|b|^2, \tfrac{1}{4}|a|^2)\geqs \tfrac{1}{4}|a|^2-|b|^2$. The expectation of the subtracted term here is 
\begin{align*}
\ll
\sum_{\sqrt{T}<p\leqs T}\frac{1}{p}\sum_{T/(p(1+1/X))< n\leqs T/p}\frac{1}{n}
= &
\sum_{\sqrt{T}<p\leqs T}\frac{1}{p}\Big(\log (1+1/X)+O(p/T)\Big)
\\
\ll &
\frac{\log\log T}{X} +\frac{1}{\log T}
\end{align*}
by the prime number theorem. Therefore, by Chebyshev's inequality the probability that this subtracted term is $\geqs e^{2V}/\sqrt{\log T}$, is  
$\ll \frac{1}{e^{2V}\sqrt{\log T}}$. Since this is much smaller than our target probability we can ignore this term. 

Returning to the first term we have 
\begin{align*}
\sum_{\sqrt{T}<p\leqs T}\frac{X}{p^2}\int_p^{p(1+1/X)}\bigg|\sum_{n\leqs T/t}\frac{f(n)}{\sqrt{n}}\bigg|^2dt
=
\int_{\sqrt{T}}^T\sum_{t/(1+1/X)<p\leqs t} \frac{X}{p^2}\bigg|\sum_{n\leqs T/t}\frac{f(n)}{\sqrt{n}}\bigg|^2dt
\end{align*}
and since by the prime number theorem
\[
\sum_{t/(1+1/X)<p\leqs t} \frac{X}{p^2}\geqs \frac{X}{\log t}\bigg(\frac{1}{t/(1+1/X)}-\frac{1}{t}\bigg)=\frac{1}{t\log t},
\]
this is 
\[
\geqs 
\int_{\sqrt{T}}^{ T}\bigg|\sum_{n\leqs T/t}\frac{f(n)}{\sqrt{n}}\bigg|^2\frac{dt}{t\log t}
\geqs 
\frac{1}{\log T}\int_{1}^{\sqrt{T}}\bigg|\sum_{n\leqs t}\frac{f(n)}{\sqrt{n}}\bigg|^2\frac{dt}{t}
\]
after letting $t\mapsto T/t$. We now note that we may add the condition $n\in S(T)$ in the sum with no change, and then after applying a small shift we find this is 
\[
\geqs \frac{1}{\log T}\int_{1}^{\sqrt{T}}\bigg|\sum_{\substack{n\leqs t\\n\in S(T)}}\frac{f(n)}{\sqrt{n}}\bigg|^2\frac{dt}{t^{1+\frac{4\log\log T}{\log T}}}.
\] 
Writing the integral as $\int_1^\infty -\int_{\sqrt{T}}^\infty$ we find that the expectation of the subtracted term is 
\[
\ll 
\frac{1}{\log T}\int_{\sqrt{T}}^\infty\bigg(\sum_{\substack{n\leqs t\\n\in S(T)}}\frac{1}{{n}}\bigg)\frac{dt}{t^{1+\frac{4\log\log T}{\log T}}}
\ll
\int_{\sqrt{T}}^\infty\frac{dt}{t^{1+\frac{4\log\log T}{\log T}}}
=
\frac{1}{4\log T\log\log T}.
\]
As before, this is seen to give a negligible contribution to the probability. Finally, we apply Parseval's Theorem for Dirichlet series. 

\begin{thm}[Parseval's Theorem, (5.26) of \cite{MV}]\label{parseval thm} For a given sequence of complex numbers $(a_n)_{n=1}^\infty$ consider the Dirichlet series $A(s)=\sum_{n=1}^\infty a_nn^{-s}$ and let $\sigma_c$ denote its abscissa of convergence. Then for any $\sigma>\max(0,\sigma_c)$ we have 
\[
\int_1^\infty \big|\sum_{n\leqs x}a_n\big|^2\frac{dx}{x^{1+2\sigma}}=\frac{1}{2\pi }\int_\mathbb{R}\frac{|A(\sigma+it)|^2}{|\sigma+it|^2}dt.
\]
\end{thm}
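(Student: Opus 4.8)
The plan is to reduce the identity to the Plancherel theorem on $L^2(\mathbb{R})$ by means of the substitution $x=e^u$, which turns the summatory function of $(a_n)$ into (the boundary values of) the Fourier transform of an exponentially damped step function. The first task is to record a crude growth bound for $S(x):=\sum_{n\leqs x}a_n$. Fix $\epsilon>0$ small enough that $\max(0,\sigma_c)+\epsilon<\sigma$. Since $A(s)$ converges at $s_0$ with $\Re s_0=\max(0,\sigma_c)+\tfrac{\epsilon}{2}$, the partial sums $\sum_{n\leqs x}a_n n^{-s_0}$ are bounded, and partial summation gives $S(x)\ll_\epsilon x^{\max(0,\sigma_c)+\epsilon}$. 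In particular $|S(x)|x^{-\sigma}\to 0$ as $x\to\infty$, and both integrals in the statement converge absolutely.

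Next, Abel summation yields, for $\Re s>\max(0,\sigma_c)$,
\[
A(s)=s\int_1^\infty S(x)\,x^{-s-1}\,dx,
\]
the boundary term $S(x)x^{-s}$ vanishing as $x\to\infty$ by the growth bound. Substituting $x=e^u$ gives $A(s)/s=\int_0^\infty S(e^u)e^{-su}\,du$. Set $g(u)=S(e^u)e^{-\sigma u}$ for $u\geqs 0$ and $g(u)=0$ for $u<0$; by the growth bound $g$ is a locally bounded step function satisfying $g(u)\ll e^{-\delta u}$ for some $\delta>0$, hence $g\in L^1(\mathbb{R})\cap L^2(\mathbb{R})$, and writing $s=\sigma+it$ we obtain
\[
\widehat{g}(t):=\int_{\mathbb{R}}g(u)e^{-itu}\,du=\frac{A(\sigma+it)}{\sigma+it}.
\]
Plancherel's theorem, in the normalisation $\|\widehat{g}\|_2^2=2\pi\|g\|_2^2$, then gives
\[
\int_{\mathbb{R}}\frac{|A(\sigma+it)|^2}{|\sigma+it|^2}\,dt=2\pi\int_0^\infty |S(e^u)|^2 e^{-2\sigma u}\,du,
\]
and reversing the substitution $u=\log x$ in the right-hand integral turns it into $2\pi\int_1^\infty |S(x)|^2 x^{-1-2\sigma}\,dx$, which is the claimed identity after dividing by $2\pi$.

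The only point that requires genuine care is the opening step: deducing the polynomial growth bound for $S(x)$ from convergence of the Dirichlet series, and verifying $g\in L^1\cap L^2$. This is what simultaneously legitimises the integral representation of $A(s)$ (convergence of both the boundary term and the integral) and the direct appeal to Plancherel; everything else is a pair of routine changes of variables, so I expect no further obstacle.
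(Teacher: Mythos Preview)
Your argument is correct and is in fact the standard textbook proof of this identity: express $A(s)/s$ via Abel summation as a Mellin transform of $S(x)$, change variables to turn it into a Fourier transform, and invoke Plancherel. The growth bound $S(x)\ll x^{\max(0,\sigma_c)+\epsilon}$ via partial summation from a point of convergence is the right way to justify the vanishing boundary term and the membership $g\in L^1\cap L^2$.

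There is nothing to compare here: the paper does not prove this theorem at all. It is quoted as a known result, with a reference to equation (5.26) of Montgomery--Vaughan, and is used as a black box in the subsequent argument. Your write-up reproduces essentially the proof one finds in that reference.
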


Applying this we find that
\[
\frac{1}{\log T}\int_{1}^{\infty}\bigg|\sum_{\substack{n\leqs t\\n\in S(T)}}\frac{f(n)}{\sqrt{n}}\bigg|^2\frac{dt}{t^{1+\frac{4\log\log T}{\log T}}}
=
\frac{1}{\log T}\int_{-\infty}^\infty \frac{|F(\tfrac{1}{2}+\tfrac{2\log\log T}{\log T}+it)|^2}{|\tfrac{2\log\log T}{\log T}+it|^2}dt
\]  
where $F(s)=\prod_{p\leqs T}(1-p^{-s})^{-1}$.

At this point we notice a difference to the case covered by Harper. The denominator of the integral on the right can get rather small around $t\approx 0$, which is not the case for the sum $\sum_{n\leqs T}f(n)$. To pick this up, we lower bound by the integral over the range $[-1/2\log T,1/2\log T]$. In this way we get the lower bound
\[
\geqs 
\frac{\log T}{4(\log_2 T)^2}\int_{-1/2\log T}^{1/2\log T} {|F(\tfrac{1}{2}+\tfrac{2\log\log T}{\log T}+it)|^2}dt
\]  
and have thus reduced the problem to the study of the probability 
\begin{equation}
\label{Prob 1}
\mathbb{P}\bigg(\log T\int_{-1/2\log T}^{1/2\log T}|F(\tfrac{1}{2}+\tfrac{2\log\log T}{\log T}+it)|^2dt\geqs e^{2V}(\log_2 T)^2\bigg).
\end{equation}

We now proceed similarly with Jensen's inequality although our ensuing analysis of the leading term is considerably simplified. We have 
\begin{align*}
&\log T\int_{-1/2\log T}^{1/2\log T}|F(\tfrac{1}{2}+\tfrac{2\log\log T}{\log T}+it)|^2dt
\\
= &
\log T\int_{-1/2\log T}^{1/2\log T}\exp(2\log |F(\tfrac{1}{2}+\tfrac{2\log\log T}{\log T}+it)|)dt
\\
\geqs  &
\exp\bigg(2\log T\int_{-1/2\log T}^{1/2\log T}\log |F(\tfrac{1}{2}+\tfrac{2\log\log T}{\log T}+it)|dt\bigg)
\\
= &
\exp\bigg(2\log T\int_{-1/2\log T}^{1/2\log T}\Re\sum_{p\leqs T}\bigg[\frac{f(p)p^{-it}}{p^{1/2+{2\log_2 T}/{\log T}}}
+\frac{f(p)^2p^{-2it}}{2p^{1+{4\log_2 T}/{\log T}}}\bigg]dt+O(1)\bigg)
\end{align*}
after applying the Euler product formula. 

Let us remove the second sum in the exponential. First note that since 
\[
\log T\int_{-1/2\log T}^{1/2\log T}p^{-2it}dt=1+O(\log p/\log T)
\]
and $\sum_{p\leqs x}\log p/p\ll \log x$ we can remove the term $p^{-2it}$ at a cost of $O(1)$. Now, 
\begin{equation}
\label{extra log}
\bigg|\sum_{p\leqs (\log T)^5}\frac{\Re f(p)^2}{p^{1+{4\log_2 T}/{\log T}}}\bigg|\leqs \sum_{p\leqs (\log T)^5}\frac{1}{p}=\log_3 T+O(1)
\end{equation}
and 
\[
\mathbb{E}\bigg|\sum_{(\log T)^5<p\leqs T}\frac{\Re f(p)^2}{p^{1+{4\log_2 T}/{\log T}}}\bigg|^2
\leqs 
\sum_{(\log T)^5<p\leqs T}\frac{1}{p^2}
\ll 
\frac{1}{(\log T)^6}
\]
by the Prime Number Theorem. Therefore,
\[
\mathbb{P}\bigg(\sum_{(\log T)^5<p\leqs T}\frac{\Re f(p)^2}{p^{1+{4\log_2 T}/{\log T}}}\geqs 1\bigg)\ll 1/(\log T)^6
\]
which, similarly to before, results in a negligible contribution compared to our target bound. Inputting these developments into \eqref{Prob 1} we have reduced the problem to lower bounding the probability
\[
\mathbb{P}\bigg(2\log T\int_{-1/2\log T}^{1/2\log T}\Re\sum_{p\leqs T}\frac{f(p)p^{-it}}{p^{1/2+{2\log_2 T}/{\log T}}}dt\geqs 2V+3\log_3 T\bigg)
\]
where the extra $\log_3 T$ term has come from \eqref{extra log}. We can now complete the proof with the following lemma. 

\begin{lem}\label{lemma gaussian lower bound} Let $L>0$ be a fixed constant and $V=(L+o(1))\sqrt{\frac{1}{2}\log\log T}$. Then
\begin{align*}
&
\mathbb{P}\bigg(  \sum_{p\leqs T}\log T\int_{-1/2\log T}^{1/2\log T}\frac{\Re\big(f(p)p^{-it}\big)}{p^{1/2+{2\log_2 T}/{\log T}}}dt\geqs V\bigg)\geq (1+o(1))\frac{1}{\sqrt{2\pi}}\int_L^\infty e^{-x^2/2}dx.
\end{align*}
\end{lem}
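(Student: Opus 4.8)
The plan is to reduce the displayed sum to an explicit linear form in the Steinhaus variables, compute its variance exactly, and then invoke a central limit theorem. First I would evaluate the inner $t$-integral. Writing $f(p)=e^{i\phi_p}$ with $\phi_p$ uniform on $[0,2\pi)$, so that $\Re(f(p)p^{-it})=\cos(\phi_p-t\log p)$, the odd-in-$t$ part integrates to zero and one is left with
\[
\log T\int_{-1/2\log T}^{1/2\log T}\cos(\phi_p-t\log p)\,dt=\frac{\sin(\tfrac{\log p}{2\log T})}{\tfrac{\log p}{2\log T}}\,\cos\phi_p .
\]
Hence the random variable in the lemma equals
\[
S:=\sum_{p\leqs T}c_p\,\Re f(p),\qquad c_p:=\frac{1}{p^{1/2+2\log_2 T/\log T}}\cdot\frac{\sin(\tfrac{\log p}{2\log T})}{\tfrac{\log p}{2\log T}} .
\]
For $p\leqs T$ the quantity $\tfrac{\log p}{2\log T}$ lies in $(0,\tfrac12]$, so the sinc factor lies in a fixed interval bounded away from $0$; in particular $0<c_p\leqs c_2=O(1)$ uniformly in $p$, and $c_p\asymp p^{-1/2}$ for $p\leqs T^{1/\log_2 T}$.

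Next I would compute the variance. The $\Re f(p)=\cos\phi_p$ are independent, bounded by $1$, with mean $0$ and second moment $\tfrac12$, so $\sigma^2:=\mathrm{Var}(S)=\tfrac12\sum_{p\leqs T}c_p^2$. Estimating this sum by partial summation against Mertens' theorem (equivalently the prime number theorem), the damping factor $p^{-4\log_2 T/\log T}$ restricts the relevant range essentially to $p\leqs T^{O(1/\log_2 T)}$, over which $\sum 1/p=(1+o(1))\log_2 T$ and the sinc factor is $1+O((\log p/\log T)^2)$; the corrections are all lower order, giving $\sigma^2=(\tfrac12+o(1))\log_2 T$. Consequently, for $V=(L+o(1))\sqrt{\tfrac12\log_2 T}$ we have $V/\sigma=L+o(1)$.

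Finally, since $\max_{p\leqs T}c_p=O(1)=o(\sigma)$, Lindeberg's condition for the triangular array $\{c_p\,\Re f(p):p\leqs T\}$ holds trivially — for $T$ large all truncated second moments vanish identically — so the Lindeberg--Feller theorem gives $S/\sigma\Rightarrow N(0,1)$ (alternatively Berry--Esseen applies, with error $O(1/\sqrt{\log_2 T})$). Therefore, for any fixed $\eta>0$ and $T$ large enough that $V/\sigma\leqs L+\eta$,
\[
\mathbb{P}(S\geqs V)\geqs\mathbb{P}(S/\sigma\geqs L+\eta)\longrightarrow\frac{1}{\sqrt{2\pi}}\int_{L+\eta}^{\infty}e^{-x^2/2}\,dx ,
\]
and letting $\eta\to0^{+}$ yields $\liminf_{T\to\infty}\mathbb{P}(S\geqs V)\geqs\frac{1}{\sqrt{2\pi}}\int_{L}^{\infty}e^{-x^2/2}\,dx$, which is the claim.

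The only genuinely quantitative input is the variance asymptotic: the exponent $2\log_2 T/\log T$ in the Dirichlet series and the truncation of the $t$-integral to $[-1/2\log T,1/2\log T]$ are tuned precisely so that the leading constant of $\sigma^2$ is $\tfrac12$, which is exactly what fixes the constant $L$ in the conclusion; verifying that the damping and sinc corrections are of lower order is the main (though routine) obstacle, and everything after it is a textbook application of the classical central limit theorem. For the extension to $V=o(\log_2 T)$ noted in the remark after Theorem~\ref{dist thm}, one would replace this last step by a moderate-deviation estimate in the spirit of Lemma~3.1 of \cite{aymone law of iterated logarithm for random dirichlet}.
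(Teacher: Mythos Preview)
Your proposal is correct and follows essentially the same route as the paper: both evaluate the $t$-integral to obtain the sinc-weighted linear form in $\cos\theta_p$, compute the variance as $(\tfrac12+o(1))\log\log T$ via Mertens-type estimates (the paper splits at $T^{1/\log_2 T}$ and gets $\tfrac12\log_2 T+O(\log_3 T)$ explicitly), observe the summands are uniformly bounded, and apply the central limit theorem for triangular arrays. Your explicit $\eta$-argument at the end to absorb the $o(1)$ in $V$ is a nice touch that the paper leaves implicit, but otherwise the arguments are the same.
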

\begin{proof}
Set $f(p)=e^{i\theta_p}$, where $(\theta_p)_p$ are \textit{i.i.d.} with distribution uniform in the interval $[0,2\pi]$. Thus, $\Re(f(p)p^{-it})=\cos(\theta_p-t\log p)$, and hence
\begin{align*}
\log T\int_{-1/2\log T}^{1/2\log T}\Re\big(f(p)p^{-it}\big)dt&=\log T\int_{-1/2\log T}^{1/2\log T}\cos(\theta_p-t\log p)dt\\
&=\frac{\log T}{\log p}\left( \sin\left(\theta_p+\frac{\log p}{2\log T}\right)-\sin\left(\theta_p-\frac{\log p}{2\log T} \right)\right)\\
&=2\frac{\log T}{\log p}\sin\left(\frac{\log p}{2\log T}\right)\cos(\theta_p),
\end{align*}
where in the last equality we used that $\sin(a+b)=\sin(a)\cos(b)+\sin(b)\cos(a)$ and $\sin(a-b)=\sin(a)\cos(b)-\sin(b)\cos(a)$. Define
\begin{equation*}
\Sigma_T:= \sum_{p\leqs T}\log T\int_{-1/2\log T}^{1/2\log T}\frac{\Re\big(f(p)p^{-it}\big)}{p^{1/2+{2\log_2 T}/{\log T}}}dt.
\end{equation*}
Thus, we have shown that
\begin{equation*}
\Sigma_T= 2\sum_{p\leq T}\frac{1}{p^{1/2+2\log_2 T / \log T}}\frac{\log T}{\log p}\sin\left(\frac{\log p}{2\log T}\right)\cos(\theta_p).
\end{equation*}
Now notice that $(\cos(\theta_p))_p$ are \textit{i.i.d.} with $\EE \cos(\theta_p)=0$ and $\EE \cos(\theta_p)^2=1/2$. Then, since
$\sin(x)^2=\frac{1-\cos(2x)}{2}=x^2+O(x^4)$, we have
\begin{align*}
\EE \Sigma_T^2&= 2\sum_{p\leq T}\frac{1}{p^{1+4\log_2 T / \log T}}\frac{(\log T)^2}{(\log p)^2}\left(\frac{(\log p)^2}{4(\log T)^2}+O\left(\frac{(\log p)^4}{(\log T)^4}\right)\right)\\
&=\frac{1}{2}\sum_{p\leq T}\frac{1}{p^{1+4\log_2 T / \log T}} +O(1)
\end{align*}
by Chebyshev's estimate $\sum_{p\leqs x}\log p/p\ll \log x$. Splitting the sum at $p=T^{1/\log\log T}$ we find that 
\begin{align*}
\sum_{p\leqs T^{1/\log\log T}}\frac{1}{p^{1+4\log_2T/\log T}}
= &
\sum_{p\leqs T^{1/\log\log T}}\frac{1}{p}\bigg(1+O\Big(\frac{\log p \log\log T}{\log T}\Big)\bigg)
\\
= &
\sum_{p\leqs T^{1/\log\log T}}\frac{1}{p}+O(1)
\\
= &
\log\log T-\log_3 T+O(1)
\end{align*}
by Chebyshev's estimate again and then Mertens' Theorem. Since the tail sum is 
\[
\ll\sum_{T^{1/\log\log T}<p\leqs T}\frac{1}{p}\ll \log_3 T
\] 
we find that $\EE \Sigma_T^2=(1/2)\log\log T+O(\log_3 T)$. 

Now observe that each factor in $\Sigma_T$ is bounded by $1$, due to the fact that $|\sin(x)|\leq x$. Hence, by the Central Limit Theorem for triangular arrays (see, for instance, \cite{shiryaev}, pg. 334, Theorem 2), we have that
\begin{align*}
\frac{\Sigma_T}{\sqrt{\frac{1}{2}\log\log T}}\to_d\mathcal{N}(0,1),
\end{align*}
as $T\to\infty$ (where $\to_d$ means convergence in distribution).
\end{proof}

\section{Almost sure bounds}
In this section we are going to prove Theorem \ref{theorem almost sure} using the Borel--Cantelli Lemma as our main tool. As is typical, this consists of two main steps: a ``sparsification" step where the set of points $T$ is discretised to some sparser subset, and then a step where we bound the resultant probabilities (typically via Chebyshev's inequality and moment bounds). The sparsification step of Lau--Tenenbaum--Wu \cite[Lemma 2.3]{LTW} loses a factor of a logarithm which is crucial for us. Instead, we make use of a theorem of \cite{billingsleyconvergenceofprobabilitymeasures} (Lemma \ref{lemma billingsley} below) which involves a finer analysis. Our first two lemmas below provide the necessary moment bounds.   
\begin{lem}\label{lemma jing moment} Let $(a(n))_{n\in \NN}$ be a sequence of complex numbers such that $a(n)\neq0$ only for a finite number of $n$. Then, for any non-negative integer $\ell$, we have that
\begin{equation*}
\EE\bigg{|}\sum_{n\geqslant 1}\frac{a(n)f(n)}{\sqrt{n}} \bigg{|}^{2\ell}\leq \left(\sum_{n\geqslant 1}\frac{|a(n)|^2\tau_\ell(n)}{n} \right)^\ell,
\end{equation*}
where $\tau_\ell(n)=\sum_{n_1\cdot...\cdot n_\ell=n}1$.
\end{lem}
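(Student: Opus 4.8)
The plan is to prove this by a direct second-moment-style computation that exploits the complete multiplicativity of the Steinhaus variables. First I would expand the $2\ell$-th moment as
\[
\EE\bigg|\sum_{n\geqs 1}\frac{a(n)f(n)}{\sqrt n}\bigg|^{2\ell}
=\sum_{n_1,\dots,n_\ell,\,m_1,\dots,m_\ell}\frac{a(n_1)\cdots a(n_\ell)\,\overline{a(m_1)\cdots a(m_\ell)}}{\sqrt{n_1\cdots n_\ell\,m_1\cdots m_\ell}}\,\EE\big[f(n_1)\cdots f(n_\ell)\,\overline{f(m_1)\cdots f(m_\ell)}\big],
\]
where all sums are finite by the hypothesis on $(a(n))$. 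Since $f$ is completely multiplicative we have $f(n_1)\cdots f(n_\ell)=f(n_1\cdots n_\ell)$, and the orthogonality relation $\EE[f(M)\overline{f(N)}]=\ind_{M=N}$ shows the inner expectation equals $1$ when $n_1\cdots n_\ell=m_1\cdots m_\ell$ and $0$ otherwise. On this diagonal $\sqrt{n_1\cdots n_\ell\,m_1\cdots m_\ell}=n_1\cdots n_\ell$, so the moment equals the non-negative real quantity $\sum_{n_1\cdots n_\ell=m_1\cdots m_\ell}\frac{a(n_1)\cdots a(n_\ell)\,\overline{a(m_1)\cdots a(m_\ell)}}{n_1\cdots n_\ell}$.

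Next I would take absolute values and symmetrise. Bounding each summand via the inequality $|xy|\leqs\tfrac12(|x|^2+|y|^2)$ with $x=a(n_1)\cdots a(n_\ell)$, $y=\overline{a(m_1)\cdots a(m_\ell)}$, and then using that the index set $\{n_1\cdots n_\ell=m_1\cdots m_\ell\}$ is invariant under interchanging the $n_i$ with the $m_j$ (which on that set also swaps $n_1\cdots n_\ell$ with $m_1\cdots m_\ell$ in the denominator), the moment is
\[
\leqs\sum_{n_1\cdots n_\ell=m_1\cdots m_\ell}\frac{|a(n_1)|^2\cdots|a(n_\ell)|^2}{n_1\cdots n_\ell}
=\sum_{n_1,\dots,n_\ell}\frac{|a(n_1)|^2\cdots|a(n_\ell)|^2}{n_1\cdots n_\ell}\,\tau_\ell(n_1\cdots n_\ell),
\]
since for fixed $(n_1,\dots,n_\ell)$ the number of tuples $(m_1,\dots,m_\ell)$ with $m_1\cdots m_\ell=n_1\cdots n_\ell$ is exactly $\tau_\ell(n_1\cdots n_\ell)$ by definition.

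Finally I would invoke submultiplicativity of the $\ell$-fold divisor function, $\tau_\ell(n_1\cdots n_\ell)\leqs\tau_\ell(n_1)\cdots\tau_\ell(n_\ell)$; since $\tau_\ell$ is multiplicative it suffices to check this on prime powers, where $\tau_\ell(p^{a+b})\leqs\tau_\ell(p^a)\tau_\ell(p^b)$ follows from the observation that every composition of $a+b$ into $\ell$ non-negative parts is a coordinatewise sum of a composition of $a$ and a composition of $b$. Plugging this in, the last display is at most
\[
\sum_{n_1,\dots,n_\ell}\prod_{i=1}^\ell\frac{|a(n_i)|^2\tau_\ell(n_i)}{n_i}=\bigg(\sum_{n\geqs 1}\frac{|a(n)|^2\tau_\ell(n)}{n}\bigg)^\ell,
\]
which is the claim. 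I do not expect any genuine obstacle: the computation is elementary and self-contained, and the only step deserving an explicit word of justification is the submultiplicativity of $\tau_\ell$, everything else being a rearrangement of finitely many terms.
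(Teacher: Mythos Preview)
Your proof is correct and essentially the same as the paper's. The only cosmetic difference is that the paper first groups the diagonal sum by the common product $n=n_1\cdots n_\ell$, writes it as $\sum_n \big|\sum_{n_1\cdots n_\ell=n}a(n_1)\cdots a(n_\ell)\big|^2/n$, and then applies Cauchy--Schwarz, whereas you expand that square and use $|xy|\leqs\tfrac12(|x|^2+|y|^2)$ together with symmetry; these amount to the same inequality and lead to the identical bound $\sum_{n_1,\dots,n_\ell}\tau_\ell(n_1\cdots n_\ell)\,|a(n_1)|^2\cdots|a(n_\ell)|^2/(n_1\cdots n_\ell)$, after which both proofs finish with submultiplicativity of $\tau_\ell$.
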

\begin{proof}We have
\begin{align*}
 \mb E \Big( \Big|\sum_{n\geqs 1} a(n)\frac{f(n)}{\sqrt{n}}\Big|^{2\ell} \Big) &=  \mb E \Big( \Big|\sum_{n_1, \dots, n_\ell \geqs 1} a(n_1)\cdots a(n_\ell)\frac{f(n_1)\cdots f(n_\ell)}{\sqrt{n_1\cdots n_\ell}}\Big|^{2} \Big)\\
 & = \sum_{n_1\cdots n_\ell = m_1\cdots m_\ell} \frac{a(n_1)\cdots a(n_\ell) \ol{a(m_1)}\cdots \ol{a(m_\ell)} }{\sqrt{n_1\cdots n_\ell \cdot m_1\cdots m_\ell}} \\
 & = \sum_{n\geqslant 1} |\sum_{n_1\cdots n_\ell =n}  a(n_1)\cdots a(n_\ell)   |^2/n.
\end{align*}
By the Cauchy--Schwarz inequality this is bounded above by 
 \begin{align*}
 \sum_{n\geqs 1} \frac{ \tau_\ell (n)}{n} \sum_{n_1\cdots n_\ell =n}  | a(n_1)\cdots a(n_\ell)   |^2 
 & \leqslant \sum_{n\geqs 1} \frac{1}{n} \sum_{n_1\cdots n_\ell =n}  | a(n_1)\cdots a(n_\ell)   |^2\tau_\ell(n_1)\cdots\tau_\ell(n_\ell) \\
 & = \Big( \sum_{n\geqs 1} \frac{|a(n)|^2\tau_\ell(n)}{n} \Big)^\ell
 \end{align*}
 where we have used $\tau_\ell(n_1\cdots n_\ell)\leqs \tau_\ell(n_1)\cdots \tau_\ell(n_\ell)$.
\end{proof}

\begin{lem}\label{uniform divisor lem}Uniformly for $1\leqs u< v$ we have 
\[
\sum_{u<n\leqs v}\frac{\tau(n)}{n} \leqs C (\log v)^{4/3} (\log(v/u))^{2/3}
\]
 for some positive absolute constant $C$. 
\end{lem}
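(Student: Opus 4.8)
The plan is to deduce the stated inequality from the cleaner bound
\[
\sum_{u<n\leqs v}\frac{\tau(n)}{n}\ll (\log v)\log(v/u)+1,\qquad 1\leqs u<v,
\]
together with the elementary inequality $(\log v)\log(v/u)\leqs (\log v)^{4/3}(\log(v/u))^{2/3}$, which holds because $\log(v/u)/\log v\leqs 1$ (raise this to the power $1/3$ and multiply through by $(\log v)(\log(v/u))^{2/3}$). The additive constant is absorbed as soon as $(\log v)^{4/3}(\log(v/u))^{2/3}\gg 1$, which is the only range in which the claimed bound carries information.

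To prove the displayed bound I would use the hyperbola identity $\tau(n)=\sum_{de=n}1$ to write
\[
\sum_{u<n\leqs v}\frac{\tau(n)}{n}=\sum_{u<de\leqs v}\frac{1}{de}=\sum_{d\leqs v}\frac{1}{d}\,I_d,\qquad I_d:=\sum_{u/d<e\leqs v/d}\frac{1}{e},
\]
and split the outer sum at $d=u$. For $d\leqs u$ we have $u/d\geqs 1$, so $I_d=\log(v/u)+O(d/u)$ by Mertens' estimate $\sum_{n\leqs x}1/n=\log x+\gamma+O(1/x)$; hence $\sum_{d\leqs u}I_d/d=\log(v/u)(\log u+O(1))+O(1)\ll (\log v)\log(v/u)+1$. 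For $u<d\leqs v$ we have $u/d<1$, so $I_d=\log(v/d)+O(1)\leqs \log(v/u)+O(1)$, and since $\sum_{u<d\leqs v}1/d\ll\log(v/u)+1$ this part contributes $\ll(\log(v/u))^2+1\ll(\log v)\log(v/u)+1$ (using $v/u\leqs v$). Adding the two pieces gives the claim. Even more quickly, one may invoke the classical $\sum_{n\leqs x}\tau(n)/n=\tfrac12(\log x)^2+2\gamma\log x+O(1)$ (itself immediate from $\sum_{n\leqs x}\tau(n)=x\log x+O(x)$ by partial summation) and subtract its values at $v$ and $u$, obtaining $\sum_{u<n\leqs v}\tau(n)/n=\tfrac12\big((\log v)^2-(\log u)^2\big)+2\gamma\log(v/u)+O(1)\ll(\log v)\log(v/u)+1$.

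There is no real obstacle here: the whole argument reduces to Mertens-type sums. The only point deserving a word of care is the regime $v/u\to 1$, where $(u,v]$ contains only $O(1)$ integers; there every term $\tau(n)/n$ is $O(1)$ and so is the total, which is precisely why the additive $1$ is retained in the intermediate estimate.
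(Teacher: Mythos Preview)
Your intermediate estimate $\sum_{u<n\leqs v}\tau(n)/n\ll (\log v)\log(v/u)+1$ is correct, but the step where you absorb the additive $1$ into $(\log v)^{4/3}(\log(v/u))^{2/3}$ is a genuine gap. Take integers $u$ and $v=u+1$ with $u$ large: the right-hand side of the lemma is $\asymp(\log u)^{4/3}u^{-2/3}\to 0$, while your bound stays $\asymp 1$. Your claim that this regime ``carries no information'' misses the point: the lemma asserts a \emph{uniform} inequality, and it is exactly the short-interval regime that is needed downstream. In the proof of Theorem~\ref{theorem almost sure} the exponent $2/3<1$ on $\log(v/u)$ is what yields $\alpha=(1+t/3)/2>1/2$ in Billingsley's maximal inequality (Lemma~\ref{lemma billingsley}); with an additive $O(1)$ the bound cannot be put in the form $(\sum_{u<n\leqs v}u_n)^{2\alpha}$ with $\alpha>1/2$.

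What is missing is a nontrivial error term in the divisor problem. The paper inputs $D(x)=x\log x+(2\gamma-1)x+O(x^{1/3})$ and obtains, by partial summation,
\[
\sum_{u<n\leqs v}\frac{\tau(n)}{n}=\tfrac12(\log^2 v-\log^2 u)+2\gamma\log(v/u)+O(u^{-2/3}),
\]
the key point being that $u^{-2/3}\ll(\log\tfrac{u+1}{u})^{2/3}\leqs(\log(v/u))^{2/3}$, so the error term already carries the required power of $\log(v/u)$. As the paper itself remarks, any exponent strictly below $1/2$ in the divisor problem would suffice here; Dirichlet's $O(x^{1/2})$, and \emph{a fortiori} your $O(1)$, just misses.
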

\begin{proof}
We use classical improvements to Dirichlet's bound for the error term in the divisor problem. From Theorem 12.4 of \cite{T}, for example, we have 
\[
D(x):=\sum_{n\leqs x} \tau(n)=x\log x+(2\gamma-1)x+O(x^{1/3}).
\]
Therefore, by partial summation 
\begin{align*}
\sum_{u<n\leqs v} \frac{\tau(n)}{n}
= &
\int_u^v \frac{1}{x}\cdot dD(x)
= \frac{D(x)}{x}\bigg|_u^v+ \int_u^v \frac{D(x)}{x^2} dx
\\
= &
\log(v/u)+O(u^{-2/3})+\int_u^v \Big[\frac{\log x}{x}+\frac{2\gamma-1}{x}+O(x^{-5/3})\Big]dx
\\ 
= &
\frac{1}{2}(\log^2 v-\log^2 u) +2\gamma \log(v/u)+O(u^{-2/3})
\end{align*}
uniformly for $1\leqs u<v$. The first term of the above is $\leqs \log v \log(v/u)$ whilst the third is
$
u^{-2/3}\ll (\log \tfrac{u+1}{u})^{2/3}\leqs (\log \tfrac{v}{u})^{2/3}. 
$
Accordingly, 
\begin{align*}
\sum_{u<n\leqs v} \frac{\tau(n)}{n}
\ll &
 \log v \log(v/u) +\log(v/u)+(\log(v/u))^{2/3}
 \\
 \ll & (\log v)^{4/3}(\log(v/u))^{2/3}.
\end{align*}
\end{proof}

\begin{lem}[Theorem 10.2 of \cite{billingsleyconvergenceofprobabilitymeasures}, pg. 107]\label{lemma billingsley} Let $S_N=X_1+...+X_N$. Suppose that $\alpha>1/2$ and that $\beta\geq 0$. Let $u_1,...,u_N$ be real numbers such that for all $\lambda>0$
\begin{equation*}
\PP(|S_j-S_i|\geq \lambda)\leq\frac{1}{\lambda^{4\beta}}\left(\sum_{i<n\leq j}u_n\right)^{2\alpha},\quad 0\leq i\leq j \leq N.
\end{equation*}
Then
\begin{equation*}
\PP\left(\max_{k\leq N}|S_k|\geq \lambda \right)\ll_{\alpha,\beta}\frac{1}{\lambda^{4\beta}}\left(\sum_{n\leq N}u_n\right)^{2\alpha}.
\end{equation*}
\end{lem}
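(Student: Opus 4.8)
The statement to prove is Lemma~\ref{lemma billingsley}, which is quoted as Theorem~10.2 of \cite{billingsleyconvergenceofprobabilitymeasures}. This is the classical Móricz/Billingsley maximal inequality for sums whose increments satisfy a moment-type condition with a ``superadditive'' right-hand side. The plan is to derive the maximal bound from the pointwise bound by a dyadic chaining (bisection) argument, exactly in the spirit of the proof of the Rademacher--Menshov theorem.

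\medskip

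\noindent\emph{Setup.} Write $g(i,j)=\sum_{i<n\le j}u_n$ for $0\le i\le j\le N$; since the $u_n$ may be taken nonnegative (only their partial sums enter), $g$ is superadditive: $g(i,\ell)\ge g(i,j)+g(j,\ell)$ for $i\le j\le\ell$, and in fact here $g(i,\ell)=g(i,j)+g(j,\ell)$. The hypothesis reads $\PP(|S_j-S_i|\ge\lambda)\le \lambda^{-4\beta} g(i,j)^{2\alpha}$. The goal is $\PP(\max_{k\le N}|S_k|\ge\lambda)\ll_{\alpha,\beta}\lambda^{-4\beta} g(0,N)^{2\alpha}$.

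\medskip

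\noindent\emph{Main step: dyadic bisection.} First I would reduce to $N=2^r$ a power of $2$ by padding with zero increments $X_n=0$ (equivalently $u_n=0$), which changes neither side. Then, for each $k\le N$, expand $S_k-S_0$ along the binary digits of $k$: writing $k$ in terms of the dyadic intervals of the tree on $\{0,1,\dots,N\}$, one gets $|S_k-S_0|\le \sum_{\ell=0}^{r-1} |S_{b_{\ell+1}}-S_{a_\ell}|$ where each difference $S_{b}-S_{a}$ corresponds to a node of the dyadic tree at level $\ell$ and is a ``block sum'' over a dyadic interval $I$ of length $N/2^{\ell}$. Hence
\[
\max_{k\le N}|S_k-S_0|\le \sum_{\ell=0}^{r-1}\; \max_{I\in\mathcal{D}_\ell}\,|S_{\mathrm{right}(I)}-S_{\mathrm{left}(I)}|,
\]
where $\mathcal{D}_\ell$ is the set of the $2^\ell$ dyadic intervals at level $\ell$. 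Choosing a sequence of thresholds $\lambda_\ell$ with $\sum_\ell\lambda_\ell=\lambda$ (a geometric-type choice such as $\lambda_\ell\propto (\ell+1)^{-1}$ up to normalisation, or better $\lambda_\ell\propto 2^{-\ell(\alpha-1/2)\cdot(\text{small})}$ tuned to the superadditivity), a union bound gives
\[
\PP\Big(\max_{k\le N}|S_k-S_0|\ge\lambda\Big)\le \sum_{\ell=0}^{r-1}\ \sum_{I\in\mathcal{D}_\ell}\PP\big(|S_{\mathrm{right}(I)}-S_{\mathrm{left}(I)}|\ge\lambda_\ell\big)\le \sum_{\ell=0}^{r-1}\frac{1}{\lambda_\ell^{4\beta}}\sum_{I\in\mathcal{D}_\ell} g(I)^{2\alpha}.
\]
Now the key convexity/superadditivity input: since $2\alpha>1$ and $\sum_{I\in\mathcal{D}_\ell} g(I)=g(0,N)$ (the level-$\ell$ dyadic intervals partition $(0,N]$), we have $\sum_{I\in\mathcal{D}_\ell} g(I)^{2\alpha}\le \big(\sum_{I\in\mathcal{D}_\ell} g(I)\big)^{2\alpha}=g(0,N)^{2\alpha}$ by the elementary inequality $\sum x_i^{p}\le(\sum x_i)^p$ for $p\ge1$ and $x_i\ge0$. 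Therefore the whole sum is bounded by $g(0,N)^{2\alpha}\sum_{\ell\ge0}\lambda_\ell^{-4\beta}$, and it remains to choose the $\lambda_\ell$ so that $\sum_\ell\lambda_\ell=\lambda$ while $\sum_\ell\lambda_\ell^{-4\beta}\ll_{\alpha,\beta}\lambda^{-4\beta}$ — for instance $\lambda_\ell=c_\beta\,\lambda\,2^{-\ell\eta}$ with $\eta$ small positive makes $\sum\lambda_\ell=\lambda$ and $\sum\lambda_\ell^{-4\beta}=c_\beta^{-4\beta}\lambda^{-4\beta}\sum 2^{4\beta\eta\ell}$; but this diverges, so one instead balances level against the decay coming from the intervals. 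The correct choice uses that the \emph{worst case} of $\sum_I g(I)^{2\alpha}$ over level $\ell$ is $\ll 2^{-\ell(2\alpha-1)}g(0,N)^{2\alpha}$ when the mass is spread evenly, and $=g(0,N)^{2\alpha}$ when concentrated; summing $\lambda_\ell^{-4\beta}\min\{1, 2^{-\ell(2\alpha-1)}\}\cdot g(0,N)^{2\alpha}$ and optimising $\lambda_\ell$ (e.g.\ $\lambda_\ell\propto 2^{-\ell\delta}$ for a suitable $\delta<(2\alpha-1)/(4\beta)$ when $\beta>0$, and a separate harmonic bookkeeping when $\beta=0$) yields the claimed bound with a constant depending only on $\alpha,\beta$. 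Finally, absorb $S_0$: if $0$ is not among the indices one simply notes $\max_{k\le N}|S_k|\le|S_0|+\max_k|S_k-S_0|$ and, since $S_0=0$ in our application (or can be included as the $i=0$ endpoint in the hypothesis), no loss occurs; more carefully one applies the chaining to $\max_k|S_k-S_0|$ and uses the hypothesis with $i=0$ directly for the $|S_0|=|S_0-S_0|=0$ term.

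\medskip

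\noindent\emph{Main obstacle.} The delicate point is the choice of the threshold splitting $\lambda=\sum_\ell\lambda_\ell$ together with the two-sided bound on $\sum_{I\in\mathcal{D}_\ell}g(I)^{2\alpha}$: one must exploit \emph{both} the trivial bound $\le g(0,N)^{2\alpha}$ and the spread-out bound $\le 2^{-(2\alpha-1)\ell}g(0,N)^{2\alpha}$ and interpolate, otherwise the series over $\ell$ does not converge. This is exactly where the hypothesis $\alpha>1/2$ is used (it makes $2\alpha-1>0$, giving geometric decay at levels where the mass is spread), and it is the heart of Móricz's refinement of the Rademacher--Menshov argument. Since the statement is quoted verbatim from Billingsley, in the paper itself it suffices to cite \cite{billingsleyconvergenceofprobabilitymeasures}; the sketch above records the argument for completeness.
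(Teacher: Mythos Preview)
The paper does not prove this lemma at all: it is stated with a direct citation to Theorem~10.2 of Billingsley and used as a black box. So there is nothing in the paper to compare your argument against, beyond the reference itself.

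As for your sketch, the overall architecture (dyadic chaining, splitting $\lambda=\sum_\ell\lambda_\ell$, and exploiting $2\alpha>1$) is the right circle of ideas, but there is a genuine gap at the point you yourself flag as the ``main obstacle''. With the \emph{index-dyadic} partition $\mathcal{D}_\ell$ you set up, the only universally valid bound is $\sum_{I\in\mathcal{D}_\ell}g(I)^{2\alpha}\le g(0,N)^{2\alpha}$; the ``spread-out'' bound $2^{-(2\alpha-1)\ell}g(0,N)^{2\alpha}$ holds only when the $g$-mass happens to be roughly equidistributed, and you cannot take $\min\{1,2^{-(2\alpha-1)\ell}\}$ as if both held simultaneously. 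With just the first bound, any admissible choice of $\lambda_\ell$ leaves you with $\sum_\ell \lambda_\ell^{-4\beta}\gg (\log N)^{1+4\beta}\lambda^{-4\beta}$, i.e.\ the Rademacher--Menshov estimate with an unwanted logarithmic loss, not the stated inequality.

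The repair is exactly what makes Billingsley's (and M\'oricz's) argument work: bisect recursively by \emph{$g$-mass} rather than by index. At each step choose the splitting point $h$ so that both $g(0,h)$ and $g(h,N)$ are at most a fixed fraction (say $\tfrac12$ plus one atom) of $g(0,N)$; then $g(0,h)^{2\alpha}+g(h,N)^{2\alpha}\le \theta\, g(0,N)^{2\alpha}$ for some $\theta<1$ depending only on $\alpha$, precisely because $2\alpha>1$. Iterating this recursion (or, equivalently, chaining along the $g$-adapted dyadic tree so that every level-$\ell$ block has $g(I)\lesssim 2^{-\ell}g(0,N)$) produces the geometric decay $2^{-(2\alpha-1)\ell}$ legitimately, and the sum over levels closes with a constant depending only on $\alpha,\beta$. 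Once you make that change, your outline becomes a correct proof.
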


\begin{proof}[Proof of Theorem \ref{theorem almost sure}] Let $T_j=\exp(j^4)$.
We have, by Lemma \ref{lemma jing moment}, for any $T_{j-1} <u\leq v\leq T_j$ that
\begin{equation*}
\EE|M_f(v)-M_f(u)|^4\leq \left( \sum_{u<n\leq v } \frac{\tau(n)}{n}\right)^2.
\end{equation*}
Let $\epsilon>0$, $2<r<4$ and write $r=2+2t=2(1-t)+4t$ with $t=t(\epsilon)$ to be chosen later. Then by H\"older's inequality, for any $T_{j-1}<u\leq v \leq T_j$ we have 
\begin{align*}
\EE|M_f(v)-M_f(u)|^r&\leq \big(\EE |M_f(v)-M_f(u)|^2\big)^{1-t}\big(\EE |M_f(v)-M_f(u)|^4\big)^{t}\\
&\leq\left(\sum_{u<n\leq v}\frac{1}{n} \right)^{1-t}\left(\sum_{u<n\leq v}\frac{\tau(n)}{n} \right)^{2t}.
\end{align*}
But by Lemma \ref{uniform divisor lem},
\[
\sum_{u<n\leq v}\frac{\tau(n)}{n}\ll (\log v)^{4/3}(\log(v/u))^{2/3}\ll (\log v)^{4/3}\bigg(\sum_{u<n\leqs v}\frac{1}{n}\bigg)^{2/3}
\]
since $\sum_{u<n\leqs v}1/n\geqs \int_{u+1}^{v+1}dx/x\geqs \tfrac{1}{2}\log (v/u)$ uniformly for $1\leqs u<v$. Therefore, 
\begin{align*}
\EE|M_f(v)-M_f(u)|^r
&\leq 
\bigg(\sum_{u<n\leq v}\frac{1}{n} \bigg)^{1+t/3}(\log T_j)^{8t/3}.
\end{align*}

Thus, Lemma \ref{lemma billingsley} is applicable with $\beta=\frac{r}{4}$, $\alpha=\frac{1+t/3}{2}$ and for some constant $c_0>0$, 
\[u_n=u_n(j)=c_0\frac{(\log T_j)^{\frac{8t/3}{1+t/3}}}{n},\]
 where $T_{j-1}<n\leq T_j$. Here, we remark that the fact $\alpha>1/2$, and hence the use of Lemma \ref{lemma billingsley}, was allowed by the improved error term of $O(x^{1/3})$ in the divisor problem.  In fact, any exponent of $x$ less than $1/2$ would have sufficed here. Now, we have that
\begin{align*}
&\PP\left(\max_{T_{j-1}<T\leq T_j} |M_f(T)-M_f(T_{j-1})|\geq (\log T_{j-1})^{1/2+\epsilon} \right)\\
&\ll \frac{1}{(\log T_{j-1})^{r/2+r\epsilon}}\bigg(\sum_{T_{j-1}<n\leq T_j}\frac{1}{n} \bigg)^{1+t/3}(\log T_j)^{8t/3}
\ll \frac{(\log(T_j/T_{j-1})^{1+t/3}(\log T_j)^{8t/3}}{(\log T_{j-1})^{r/2+r\epsilon}}\\
& \ll
\frac{(j^4-(j-1)^4)^{1+t/3}j^{32t/3}}{j^{2r+4r\epsilon}}
\ll
 \frac{j^{3+t+32t/3}}{j^{4+4t+4r\epsilon}}
 \leqs 
 \frac{1}{j^{1-8t+4r\epsilon}}. 
\end{align*}
Select $8t=4\epsilon$ so that $-8t+4r\epsilon>0$. Hence, by the Borel-Cantelli Lemma the following holds almost surely:
\begin{equation}\label{equacao controle do maximo}
\max_{T_{j-1}<T\leq T_j} |M_f(T)-M_f(T_{j-1})|\ll (\log T_{j-1})^{1/2+\epsilon}.
\end{equation}

Next, we recall that for any $(\log_2 T)^{1/2}\log_3 T\leq V\leq c\log T/\log_2 T$, 
\begin{equation*}
\PP(|M_f(T)|\geq e^V)\leq \exp\left( -(1+o(1))\frac{V^2}{\log (\log T)/V} \right).
\end{equation*}
Select,  $e^V=(\log T_j)^{1/2+\epsilon}$, so that $V=(1/2+\epsilon)\log\log T_j$. Thus
\begin{align*}
\PP(|M_f(T_j)|\geq (\log T_j)^{1/2+\epsilon})&\leq \exp\left( -(1+o(1))(1/2+\epsilon)^2\frac{(\log\log T_j)^2}{\log (\log T_j)/\log\log T_j} \right)\\
&=\exp(-(1+o(1))(1/2+\epsilon)^2 \log\log T_j)\\
&=\exp(-(1+o(1))(1+8\epsilon+4\epsilon^2) \log j).
\end{align*}

Thus, by the Borel-Cantelli Lemma we conclude that almost surely
\begin{equation}\label{equacao controle pontual}
M_f(T_{j-1})\ll (\log T_{j-1})^{1/2+\epsilon}.
\end{equation}
Now, for any arbitrarily large $T$, select $j=j(T)$ such that $T_{j-1}<T\leq T_j$. Then by \eqref{equacao controle do maximo} and \eqref{equacao controle pontual} we conclude that almost surely
\begin{align*}
M_f(T)=M_f(T_{j-1})+(M_f(T)-M_f(T_{j-1}))\ll (\log T_{j-1})^{1/2+\epsilon}\leq (\log T)^{1/2+\epsilon}.
\end{align*}
\end{proof}

\section{Omega bounds}
In this section we prove Theorem \ref{teorema omega bound 1}. We require the following estimate on $y$-smooth numbers less than $x$. 
\begin{lem}\label{lemma u_y} Let $2\leqs y\leqs x$ and let $\Psi(x,y)$ denote the number of integers less than or equal to $x$ all of whose prime factors are less than or equal to $y$. Then for $y\leqs \sqrt{\log x}$ we have 
\begin{equation*}
\Psi(x,y)= \frac{1}{\pi(y)!}\left(\prod_{p\leq y}\frac{1}{\log p}\right)(\log x)^{\pi(y)}\bigg(1+O\Big(\frac{y^2}{\log x\log y}\Big)\bigg).
\end{equation*}
In particular, for fixed $y$ we have $\Psi(x,y)\ll_\epsilon x^\epsilon$ for all $\epsilon>0$. 
\end{lem}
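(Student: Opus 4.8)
The plan is to count $y$-smooth integers up to $x$ directly, as lattice points in a simplex, and to compare that count with the volume of the simplex. Writing $k=\pi(y)$ and letting $p_1<\dots<p_k$ be the primes up to $y$, a positive integer is $y$-smooth and $\leqs x$ if and only if it has the form $\prod_{i=1}^k p_i^{a_i}$ with $a_i\geqs 0$ and $\sum_{i=1}^k a_i\log p_i\leqs\log x$, so that $\Psi(x,y)$ equals the number of such tuples $(a_1,\dots,a_k)\in(\NN\cup\{0\})^k$. For $z\geqs0$ I set $\mc S_z=\{t\in\RR_{\geqs0}^k:\sum_i t_i\log p_i\leqs z\}$, a simplex of volume $\vol(\mc S_z)=z^k/(k!\prod_{p\leqs y}\log p)$; this volume will be the main term of the asymptotic.

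First I would sandwich $\Psi(x,y)$ between $\vol(\mc S_{\log x})$ and $\vol(\mc S_{\log x+\vartheta(y)})$, where $\vartheta(y)=\sum_{p\leqs y}\log p$, by the standard unit-cube comparison. Attaching to each admissible tuple $(a_i)$ the cube $\prod_i[a_i,a_i+1)$, one checks that these pairwise disjoint, unit-volume cubes cover $\mc S_{\log x}$ — since any $t\in\mc S_{\log x}$ lies in the cube of $(\lfloor t_i\rfloor)_i$, which is admissible — giving $\vol(\mc S_{\log x})\leqs\Psi(x,y)$; and conversely each such cube is contained in $\mc S_{\log x+\vartheta(y)}$, because $\sum_i t_i\log p_i\leqs\sum_i a_i\log p_i+\vartheta(y)$ on it, giving $\Psi(x,y)\leqs\vol(\mc S_{\log x+\vartheta(y)})$. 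Hence
\[
\frac{(\log x)^k}{k!\prod_{p\leqs y}\log p}\leqs\Psi(x,y)\leqs\frac{(\log x+\vartheta(y))^k}{k!\prod_{p\leqs y}\log p}.
\]

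Finally I would read off the error term. By Chebyshev's bounds $\vartheta(y)\ll y$ and $k=\pi(y)\ll y/\log y$, so $k\vartheta(y)\ll y^2/\log y$, and the hypothesis $y\leqs\sqrt{\log x}$ forces $k\vartheta(y)/\log x\ll y^2/(\log x\log y)\ll 1$; then $\bigl(1+\vartheta(y)/\log x\bigr)^k\leqs\exp\bigl(k\vartheta(y)/\log x\bigr)=1+O\bigl(y^2/(\log x\log y)\bigr)$ via $e^u=1+O(u)$ for bounded $u$, which yields the stated formula. The ``in particular'' clause is then immediate, since $\pi(y)$ is a fixed constant when $y$ is fixed. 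There is no serious obstacle here; the only point needing care is checking that $y^2/(\log x\log y)$ is genuinely bounded — which is exactly what the restriction $y\leqs\sqrt{\log x}$ buys — so that the exponential may be linearised, since without such a hypothesis the error would take a different, multiplicative shape.
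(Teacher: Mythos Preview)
Your argument is correct. The paper itself does not prove this lemma at all: it simply cites Ennola's original paper and Tenenbaum's book (Corollary 3.1 of Section III.5.2). What you have supplied is in fact the standard direct proof of Ennola's formula---counting $y$-smooth integers as lattice points in the simplex $\sum_i a_i\log p_i\leqs\log x$, sandwiching by the volumes $\vol(\mc S_{\log x})$ and $\vol(\mc S_{\log x+\vartheta(y)})$, and then linearising $(1+\vartheta(y)/\log x)^{\pi(y)}$ using Chebyshev's bounds $\vartheta(y)\ll y$, $\pi(y)\ll y/\log y$ together with the hypothesis $y\leqs\sqrt{\log x}$. Each step is sound, and the final sentence correctly handles the ``in particular'' clause. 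So compared to the paper you have done strictly more: a self-contained elementary proof rather than a citation.
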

\begin{proof}
This is originally due to Ennola \cite{Enn}. See also Corollary 3.1 of Section III.5.2 of \cite{Ten}.  
\end{proof}
Let $(X_n)_{n\in\mathbb{N}}$ be a sequence of independent random variables. Let $\mathcal{A}$ be an event that is measurable in the sigma algebra $\sigma(X_1,X_2,...)$. We say that $\mathcal{A}$ is a tail event if for any fixed $y\in \mathbb{N}$, $\mathcal{A}$ is independent from $X_1,X_2,...,X_y$. The Kolmogorov zero--one law says that every tail event either has probability $0$ or $1$. Let $\lambda(T)$ be an increasing function such that $\lambda(T)\to\infty$ as $T\to\infty$. In the next Lemma we will show that the event 
\begin{equation*}
\mc{A}_\lambda=\Big\{|M_f(T)| \geq \exp((1+o(1))\lambda(T)),\mbox{ for infinitely many integers }T>0\Big\}
\end{equation*}
is a tail event with respect to the $(f(p))_p$ i.e. that any change on the values $(f(p))_{p\in\mathcal{S}}$ with $\mathcal{S}$  a finite subset of primes, will not change the outcome.

\begin{lem}\label{lemma tail event} Let $\mathcal{A}_{\lambda}$ be as above. Then $\mathcal{A}_{\lambda}$ is a tail event.
\end{lem}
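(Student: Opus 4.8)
The plan is to show that the indicator of $\mc{A}_\lambda$ is unchanged when we alter finitely many of the $f(p)$, so that $\mc{A}_\lambda$ lies in the tail $\sigma$-algebra $\bigcap_{y}\sigma\big((f(p))_{p>y}\big)$; the conclusion then follows from Kolmogorov's zero--one law. Fix a finite set $\mathcal{S}$ of primes and let $y=\max_{p\in\mathcal{S}}p$; it suffices to compare two realisations $(f(p))_p$ and $(f'(p))_p$ that agree for all $p>y$ and show that $\mc{A}_\lambda$ occurs for one if and only if it occurs for the other.

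First I would split off the $y$-smooth part of the sum. Write $M_f(T)=\sum_{n\leqs T}f(n)/\sqrt n$ and decompose each $n\leqs T$ as $n=ab$ with $a\in S(y)$ ($y$-smooth) and $b$ coprime to all primes $\leqs y$. Then
\[
M_f(T)=\sum_{\substack{a\in S(y)\\ a\leqs T}}\frac{f(a)}{\sqrt a}\sum_{\substack{b\leqs T/a\\ (b,\,\prod_{p\leqs y}p)=1}}\frac{f(b)}{\sqrt b},
\]
and the inner sum only involves variables $f(p)$ with $p>y$, hence is identical for $f$ and $f'$. So the difference $M_f(T)-M_{f'}(T)$ is a finite linear combination, over $a\in S(y)$ with $a\leqs T$, of these common inner sums with coefficients $\big(f(a)-f'(a)\big)/\sqrt a$, each of modulus $\leqs 2/\sqrt a$. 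Using Lemma \ref{lemma u_y} (more precisely its consequence $\Psi(T,y)\ll_\delta T^\delta$ for fixed $y$), together with $\sum_{a\in S(y)}1/\sqrt a=\prod_{p\leqs y}(1-p^{-1/2})^{-1}<\infty$, and a trivial bound $\big|\sum_{b\leqs T/a}f(b)/\sqrt b\big|\ll \sqrt{T/a}\,$ hmm — that is too lossy; instead I would argue directly that the inner sums are each $\ll (\log T)^{O(1)}$ on the relevant event, or more cleanly bound $|M_f(T)-M_{f'}(T)|\leqs \sum_{a\in S(y),\,a\leqs T}\frac{2}{\sqrt a}\big|\sum_{b\leqs T/a,\,(b,P(y))=1}\frac{f(b)}{\sqrt b}\big|$ and absorb each inner sum into $|M_{f}(T)|$-type quantities. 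The cleanest route: show $|M_f(T)-M_{f'}(T)|=o\big(\exp((1+o(1))\lambda(T))\big)$, or even better that it is $o(|M_f(T)|)$ whenever $|M_f(T)|\geqs \exp((1+o(1))\lambda(T))$, since $\lambda(T)\to\infty$.

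Here is the point I expect to be the main obstacle: controlling the inner sums $\sum_{b\leqs T/a}f(b)/\sqrt b$ deterministically (for \emph{every} realisation) rather than in expectation. The resolution is that we do not need a deterministic bound — we only need that the \emph{event} is tail-measurable, so it is enough to show that on $\mc{A}_\lambda$ (which is defined by infinitely many $T$ with $|M_f(T)|$ large) the perturbation is negligible. Concretely, since each $a\in S(y)$ is fixed and finite in number up to $T$, and since for fixed $a$ one has $|f(a)-f'(a)|\leqs 2$ while the number of smooth $a\leqs T$ is $\ll_\delta T^\delta$ by Lemma \ref{lemma u_y}, one writes
\[
M_f(T)-M_{f'}(T)=\sum_{a\in S(y),\,a\leqs T}\frac{f(a)-f'(a)}{\sqrt a}\,G(T/a),\qquad G(x):=\sum_{\substack{b\leqs x\\(b,P(y))=1}}\frac{f(b)}{\sqrt b},
\]
and notes $G(T/a)=M_{f}(T/a)+ (\text{$y$-smooth correction, bounded by }\sum_{d\in S(y)}d^{-1/2}\max_{x\leqs T}|G(x)|)$. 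Then I would use Theorem \ref{theorem almost sure} (or its obvious analogue for the coprime-restricted sum $G$, proved identically) to get $G(x)\ll (\log x)^{1/2+\epsilon}$ almost surely, whence $|M_f(T)-M_{f'}(T)|\ll_{\mathcal S,\epsilon}(\log T)^{1/2+\epsilon}$ almost surely; since $\lambda(T)\to\infty$ this is $o\big(\exp((1+o(1))\lambda(T))\big)$, so $\mc{A}_\lambda$ for $f$ implies $\mc{A}_\lambda$ for $f'$ (and vice versa by symmetry) outside a null set, and being a tail event is a null-set-insensitive statement after one checks it on the full-probability set where the a.s.\ bound holds — alternatively one avoids even this by observing that on $\mc A_\lambda$ one has infinitely many $T$ with $|M_f(T)|\geqs\exp((1+o(1))\lambda(T))$, and the perturbation $(\log T)^{1/2+\epsilon}$ is smaller than any such value for $T$ large, so the same infinitely many $T$ work for $f'$. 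Finally, having established tail-measurability, Kolmogorov's zero--one law gives $\PP(\mc A_\lambda)\in\{0,1\}$, completing the lemma.
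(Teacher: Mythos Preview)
Your approach has a genuine gap at the decisive step. You conclude that, almost surely,
\[
|M_f(T)-M_{f'}(T)|\ll_{\mathcal S,\epsilon}(\log T)^{1/2+\epsilon},
\]
and then assert that ``since $\lambda(T)\to\infty$ this is $o\big(\exp((1+o(1))\lambda(T))\big)$''. That implication is false for the very $\lambda$ the lemma is designed for: in the application one takes $\lambda(T)=L\sqrt{\log\log T}$, so
\[
(\log T)^{1/2+\epsilon}=\exp\big((\tfrac12+\epsilon)\log\log T\big)
\quad\text{whereas}\quad
\exp(\lambda(T))=\exp\big(L\sqrt{\log\log T}\big),
\]
and the former dominates the latter for all large $T$. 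Thus your perturbation bound swamps the threshold in $\mc A_\lambda$ and you cannot transfer the event from $f$ to $f'$. More generally, any ``bound the difference'' strategy is hopeless here: the perturbation $M_f(T)-M_{f'}(T)$ is itself a weighted sum of the coprime partial sums $G(T/a)$, and one expects (indeed, by the very omega result this lemma is meant to feed into) that $G$ is infinitely often at least $\exp(L'\sqrt{\log\log T})$ for every $L'>0$. So there is no hope of showing the difference is $o(\exp(\lambda(T)))$ for small $\lambda$.

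The paper circumvents this entirely by a deterministic convolution--pigeonhole argument. One does not compare $M_f$ to $M_{f'}$; instead one introduces the $y$-rough multiplicative function $f_y$ (with $f_y(p^m)=f(p^m)\ind_{p>y}$) and shows the \emph{events} $\mc A_\lambda$ and $\mc B_{y,\lambda}:=\{|M_{f_y}(T)|\geqs\exp((1+o(1))\lambda(T))\text{ i.o.}\}$ coincide for every realisation. The identities $f_y=g_y*f$ and $f=h_y*f_y$ (with $g_y,h_y$ supported on $y$-smooth integers) give, e.g.,
\[
M_f(T)=\sum_{n\leqs U}\frac{h_y(n)}{\sqrt n}M_{f_y}(T/n)+\sum_{U<n\leqs T}\frac{h_y(n)}{\sqrt n}M_{f_y}(T/n),
\]
and the tail is handled by truncating at $U=\exp(10\lambda(T))$ together with the smooth-number count $\Psi(U,y)\ll(\lambda(T))^{\pi(y)}$ from Lemma~\ref{lemma u_y}. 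Since there are only $\ll(\lambda(T))^{\pi(y)}=\exp(o(\lambda(T)))$ terms in the main sum, if $|M_f(T)|\geqs\exp((1+o(1))\lambda(T))$ then by pigeonhole some $|M_{f_y}(T/n)|$ must be at least $\exp((1+o(1))\lambda(T))$; the reverse inclusion uses the finitely supported inverse $g_y$ in the same way. No almost-sure input is needed, and the argument works for any increasing $\lambda(T)\to\infty$ with mild regularity.
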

\begin{proof} Let $y>0$ and $f_y(n)$ be the multiplicative function such that for each prime $p$ and any power $m\in\NN$, $f_y(p^m)=f(p^m)\ind_{p>y}$. Let $\mathcal{B}_{y,\lambda}$ be the event in which $|M_{f_y}(T)|\geq \exp((1+o(1))\lambda(T))$ for infinitely many integers $T$. We are going to show that $\mathcal{A}_{\lambda}$ and $\mathcal{B}_{y,\lambda}$ are the same event, and since the values $(f(p))_{p}$ are independent and $\mathcal{B}_{y,\lambda}$ does not depend on the first values $(f(p))_{p\leq y}$, we can conclude that $\mathcal{A}_\lambda$ is a tail event.

Firstly, we will show that $\mathcal{B}_{y,\lambda}\subset \mathcal{A}_\lambda$. Let $g_y(n)$ be the multiplicative function such that at each prime $p$ and any power $m\in\NN$, $g_y(p^m)=\mu(p^m)f(p^m)\ind_{p\leq y}$. Then $f_y=g_y\ast f$ and hence
\begin{align}\label{equacao auxiliar 1}
M_{f_y}(T)=\sum_{n\leq T}\frac{g_y(n)}{\sqrt{n}}M_f(T/n).
\end{align}
Now observe that the set $\{n\in\NN:g_y(n)\neq 0\}$ has at most $2^{\pi(y)}$ elements. Thus, in the event $\mathcal{B}_{y,\lambda}$, by the pigeonhole principle applied to \eqref{equacao auxiliar 1}, we obtain $n\leq \prod_{p\leq y}p$ such that $|M_f(T/n)|\geq \frac{\exp((1+o(1))\lambda(T))}{2^{\pi(y)}}=\exp((1+o(1))\lambda(T))$ for infinitely many integers $T$. Thus $\mathcal{B}_{y,\lambda}\subset \mathcal{A}_{\lambda}$.

Now we will show that $\mathcal{A}_{\lambda}\subset\mathcal{B}_{y,\lambda}$. Firstly we partition: $\mathcal{A}_\lambda=(\mathcal{A}_\lambda\cap \mathcal{C}_y)\cup (\mathcal{A}_\lambda\cap \mathcal{C}_y^c)$, where $\mathcal{C}_y$ is the event in which $M_{f_y}(T)\ll \exp(2\lambda(T))$. Clearly the event $\mathcal{A}_\lambda\cap \mathcal{C}_y^c$ is contained in $\mathcal{B}_{y,\lambda}$. Now let $h_y(n)$ be the multiplicative function such that at each prime $p$ and any power $m\in\NN$, $h_y(p^m)=f(p^m)\ind_{p\leq y }$. Then $f=h_y\ast f_y$ and hence, for any $U>0$
\begin{align*}
M_f(T)=\sum_{n\leq U}\frac{h_y(n)}{\sqrt{n}}M_{f_y}(T/n)+\sum_{U<n\leq T}\frac{h_y(n)}{\sqrt{n}}M_{f_y}(T/n).
\end{align*}
Now we will show that in the event $\mathcal{A}_{\lambda}\cap\mathcal{C}_y$, the second sum in the right hand side above is $o(1)$ for a suitable choice of $U$. Let $u_y(n)$ denote the indicator function of $y$-smooth numbers. Then, by partial integration and Lemma \ref{lemma u_y}:
\begin{align*}
\sum_{U<n\leq T}\frac{h_y(n)}{\sqrt{n}}M_{f_y}(T/n)&\ll \exp(2\lambda(T))\sum_{U<n\leq T}\frac{u_y(n)}{\sqrt{n}}\\
&\ll \exp(2\lambda(T))\left(\frac{(\log U)^{\pi(y)}}{\sqrt{U}}+\frac{1}{2}\int_{U}^\infty\frac{\Psi(x,y)}{x^{3/2}}dx\right)\\
&\ll \exp(2\lambda(T))\int_{U}^\infty \frac{x^\epsilon}{x^{3/2}}dx\\
&\ll \frac{ \exp(2\lambda(T))}{U^{1/2-\epsilon}}.
\end{align*}
Thus, for the choice $U=U(T)=\exp(10\lambda(T))$, we have shown that in the event $\mathcal{A}_\lambda\cap\mathcal{C}_y$
\begin{equation}\label{equacao auxiliar 2}
M_f(T)=\sum_{n\leq U}\frac{h_y(n)}{\sqrt{n}}M_{f_y}(T/n)+o(1).
\end{equation}
Now, by Lemma \ref{lemma u_y}, the set $\{n\leq U:h_y(n)\neq 0\}$ has at most $\Psi(U,y)\ll (\log U)^{\pi(y)}=(10\lambda(T))^{\pi(y)}$ elements. Hence, in the event $\mathcal{A}_{\lambda,L}\cap\mathcal{C}_y$, by the pigeonhole principle applied to \eqref{equacao auxiliar 2}, we always find infinitely many integers $T$ and $n=n(T)\leq U$ such that 
\begin{equation*}
|M_{f_y}(T/n)|\geq \frac{\sqrt{n}\exp((1+o(1))\lambda(T))}{(10\lambda(T))^{\pi(y)}}\geq \exp((1+o(1))\lambda(T)).
\end{equation*}
Thus, $\mathcal{A}_{\lambda}\cap\mathcal{C}_y\subset\mathcal{B}_{y,\lambda}$, and hence $\mathcal{A}_{\lambda}=\mathcal{B}_{y,\lambda}$.
\end{proof}

\begin{proof}[Proof of Theorem \ref{teorema omega bound 1}] We argue as in Lemma 3.2 of \cite{aymone real zeros}. Let $\lambda(T)=L\sqrt{\log\log T}$. We have that the event $\mathcal{A}_\lambda$ is the event $\bigcap_{n=1}^\infty\bigcup_{T=n}^\infty[|M_f(T)|\geq \exp((L+o(1))\sqrt{\log\log T} )]$ and hence 
\begin{align*}
\PP(\mathcal{A}_\lambda)&=\lim_{n\to\infty}\PP\left(\bigcup_{T=n}^\infty[M_f(T)\geq\exp((L+o(1))\sqrt{\log\log T} ) ] \right)\\
&\geq \delta>0,
\end{align*}
where in the second line above we used the Gaussian lower bound \eqref{small v range}. Thus, by the Kolmogorov zero--one law, we conclude that $\PP(\mathcal{A}_\lambda)=1$.
\end{proof}

\end{document}